\documentclass[12pt,reqno]{amsart}

\usepackage[T1]{fontenc}
\usepackage[utf8]{inputenc}
\usepackage{amsmath,amsfonts,amsthm,amssymb,amsxtra,bbm, dsfont}
\usepackage{fourier,setspace,graphicx,color,pdflscape}
\usepackage[colorlinks=true]{hyperref}
\usepackage{url}
\hypersetup{urlcolor=blue, linkcolor=blue, citecolor=red, anchorcolor=blue}

\newcounter{Hequation}

\makeatletter\g@addto@macro\equation{\stepcounter{Hequation}}\makeatother

\newcounter{labelrescounter}
\setcounter{labelrescounter}{0}
\newcommand{\labelres}[1]{\addtocounter{labelrescounter}{1}\hypertarget{res-#1}{\kern 0.1pt}\label{#1}}

\makeatletter
\@addtoreset{equation}{myequation}
\makeatother

\usepackage{etoolbox}
\newcounter{taggedeq}
\setcounter{taggedeq}{0}
\pretocmd{\equation}{\stepcounter{taggedeq}}{}{}


\setlength{\voffset}{-.7truein}
\setlength{\textheight}{9.3truein}
\setlength{\textwidth}{6.05truein}
\setlength{\hoffset}{-.7truein}
\parskip=4pt

\newtheorem{theorem}{Theorem}
\newtheorem{proposition}[theorem]{Proposition}
\newtheorem{lemma}[theorem]{Lemma}
\newtheorem{corollary}[theorem]{Corollary}\newtheorem{remark}[theorem]{Remark}

\newcommand{\innerproduct}[2]{\left\langle #1, #2 \right\rangle}
\newcommand{\R}{{\mathbb R}}

\newcommand{\C}{{\mathbb C}}
\newcommand{\be}[1]{\begin{equation}\label{#1}}
\newcommand{\ee}{\end{equation}}
\renewcommand{\(}{\left(}
\renewcommand{\)}{\right)}

\newcommand{\finprf}{\null\hfill$\square$\vskip 0.3cm}
\renewcommand{\Re}{\mathrm{Re}}

\makeatletter
\@namedef{subjclassname@2020}{%
\textup{2020} Mathematics Subject Classification}
\makeatother
\newcommand{\msc}[1]{\href{https://mathscinet.ams.org/mathscinet/search/mscdoc.html?code=#1}{#1}}


\definecolor{darkgreen}{rgb}{0,0.4,0}

\numberwithin{equation}{section}

\title[Distinguished self-adjoint extension and eigenvalues of operators with gaps]{Distinguished self-adjoint extension and eigenvalues of operators with gaps. Application to Dirac-Coulomb operators}

\author[J.~Dolbeault]{Jean Dolbeault}
\address{\hspace*{-12pt}J.~Dolbeault: CEREMADE (CNRS UMR n$^\circ$ 7534), PSL university, Universit\'e Paris-Dauphine\newline Place de Lattre de Tassigny, 75775 Paris 16, France}
\email{dolbeaul@ceremade.dauphine.fr}

\author[M.J.~Esteban]{Maria J.~Esteban}
\address{\hspace*{-12pt}M.J.~Esteban: CEREMADE (CNRS UMR n$^\circ$ 7534), PSL university, Universit\'e Paris-Dauphine\newline Place de Lattre de Tassigny, 75775 Paris 16, France}
\email{esteban@ceremade.dauphine.fr}

\author[E.~S\'er\'e]{Eric S\'er\'e}
\address{\hspace*{-12pt}E.~S\'er\'e: CEREMADE (CNRS UMR n$^\circ$ 7534), PSL university, Universit\'e Paris-Dauphine\newline Place de Lattre de Tassigny, 75775 Paris 16, France}
\email{sere@ceremade.dauphine.fr}

\begin{document}

\begin{abstract} We consider a linear symmetric operator in a Hilbert space that is neither bounded from above nor from below, admits a block decomposition corresponding to an orthogonal splitting of the Hilbert space and has a variational gap property associated with the block decomposition. A typical example is the Dirac-Coulomb operator defined on $C^\infty_c(\R^3\setminus\{0\}, \C^4)$. In this paper we define a distinguished self-adjoint extension with a spectral gap and characterize its eigenvalues in that gap by a min-max principle. This has been done in the past under technical conditions. Here we use a different, geometric strategy, to achieve that goal by making only minimal assumptions. Our result applied to the Dirac-Coulomb-like Hamitonians covers sign-changing potentials as well as molecules with an arbitrary number of nuclei having atomic numbers less than or equal to 137.\end{abstract}

\keywords{variational methods; self-adjoint operators; symmetric operators; quadratic forms; spectral gaps; eigenvalues; min-max principle; Rayleigh-Ritz quotients; Dirac operators}

\subjclass[2020]{Primary: \msc{47B25}. Secondary: \msc{47A75}, \msc{49R50}, \msc{81Q10}.}

\maketitle
\thispagestyle{empty}

\section{Introduction and main result}\label{Section:Intro}

In three space dimensions, the {\it free Dirac operator} is of the form $D=-i\,\alpha\cdot\nabla+\beta$ with
\[
 \beta=\left( \begin{matrix} I_2 & 0 \\ 0 & -I_2 \\ \end{matrix} \right),\quad \; \alpha_k=\left( \begin{matrix}
0 &\sigma_k \\ \sigma_k &0 \\ \end{matrix}\right)  \qquad (k=1, 2, 3)\,,
\]
$\sigma_1,\,\sigma_2,\,\sigma_3$ being the Pauli matrices (see~\cite{thaller}). The {\it Dirac-Coulomb operator} is $\,D_V=D+V$ where $V$ is the Coulomb potential $-\frac{\nu}{|x|}$ ($\nu>0$) or, more generally, the convolution of $-\frac{1}{|x|}$ with an extended charge density. Usually, one first defines $D_{-\nu/|x|}$ on the so-called minimal domain $C^\infty_c(\R^3\setminus\{0\},\C^4)$. The resulting minimal operator is symmetric but not closed in the Hilbert space $L^2(\R^3,\C^4)$. It is essentially self-adjoint when $\nu$ lies in the interval $(0,\sqrt{3}/{2}]$. In other words, its closure is self-adjoint and there is no other self-adjoint extension. For larger constants $\nu$ one must define a distinguished, physically relevant, self-adjoint extension and this can be done when $\nu\leq 1$. The essential spectrum of this extension is $\R\setminus (-1,1)$, which is neither bounded from above nor from below. In atomic physics, its eigenvalues in the gap $(-1,1)$ are interpreted as discrete electronic energy levels.
 
Important contributions to the construction of distinguished self-adjoint realisations of Dirac-Coulomb operators were made in the 1970's, see, {\it e.g.},~\cite{Sch72, Wust-73, Wust-75, Wust-77, Nen76, Nen77, KlaWus-78, Klaus-80b}. In these papers, general classes of potentials $V$ are considered, but in the case $V=-\nu/|x|$ one always assumes that $\nu$ is smaller than $1$.
 
Reliable computations of the discrete electronic energy levels in the spectral gap $(-1,1)$ are a central issue in Relativistic Quantum Chemistry. For this purpose, Talman~\cite{Talman-86} and Datta-Devaiah~\cite{datta1988minimax} proposed a min-max principle involving Rayleigh quotients and the decomposition of four-spinors into their so-called large and small two-components. A related min-max principle based on another decomposition using the free-energy projectors $\mathbbm 1_{\,\R_{\pm}}(D)$ was proposed in \cite{esteban1997existence} and justified rigorously in \cite{DES00b} for $\nu\in\left(0,\frac{2}{\pi/2+2/\pi}\right)$.
An abstract version of these min-max principles deals with a self-adjoint operator $A$ defined in a Hilbert space $\mathcal H\,$ and satisfying a {\it variational gap} condition, to be specified later, related to a block decomposition under an orthogonal splitting
\be{Orth}
\mathcal H=\mathcal H_+\oplus\mathcal H_-\,.
\ee
Such an abstract principle was proved for the first time in~\cite{GS99}, but its hypotheses were rather restrictive and the application to the distinguished self-adjoint realization of $D_V$ only gave Talman's principle for bounded electric potentials (see also~\cite{kraus2004variational,tretter2008spectral} for related abstract principles). In~\cite{Griesemer1999}, an improved abstract min-max principle was applied to $D_V$ with the splitting given by the free-energy projectors, for the unbounded potential $-\nu/|x|$ with $\nu\in(0,0.305]$. In~\cite{DES00a}, thanks to a different abstract approach, the range of essential self-adjointness $\nu\in(0,\sqrt{3}/{2}]$ was dealt with, both for Talman's splitting and the free projectors. The articles~\cite{MorMul-15, M16, ELS19, SST20, ELS21} followed and the full range $\nu\in(0,1]$ is now covered.

Using some of the tools of~\cite{DES00a}, Esteban and Loss~\cite{EstLos-07,EstLos-08} proposed a new strategy to build a distinguished, Friedrichs-like, self-adjoint extension of an abstract {\it symmetric} operator with variational gap and applied it to the minimal Dirac-Coulomb operator, with $\nu\in (0, 1]\,$. In~\cite{ELS19, ELS21}, connections were established between this new approach and the earlier constructions for Dirac-Coulomb operators.
 
Important closability  issues had been overlooked in some arguments of~\cite{DES00a} and  some domain invariance questions had not been addressed properly in ~\cite{EstLos-07,EstLos-08} (see the beginning of Subsection~\ref{isom}).  In~\cite{SST20} these issues are clarified and the self-adjoint extension problem considered in~\cite{EstLos-07,EstLos-08} is connected to the min-max principle for the eigenvalues of self-adjoint operators studied in~\cite{DES00a}. The abstract results in~\cite{SST20} have many important applications, but some examples are not covered yet, due to an essential self-adjointness assumption made on one of the blocks. In the corrigendum~\cite{DES2023corrigendum}, we present another way of correcting the arguments of~\cite{DES00a} thanks to a new geometric viewpoint. In the present work, by adopting this viewpoint, we are able to completely relax the essential self-adjointness assumption of~\cite{SST20}. Additionally, our variational gap assumption is more general, as it covers a class of multi-center Dirac-Coulomb Hamiltonians in which the lower min-max levels fall below the threshold of the continuous spectrum (see, e.g., [6] for a study of such operators): we shall use the image that some eigenvalues {\it dive} into the negative continuum. 

Before going into the detail of our assumptions and results, we fix some general notations that will be used in the whole paper. We consider a Hilbert space $\mathcal H$ with scalar product $\innerproduct{\cdot}{\cdot}$ and associated norm $\|\cdot\|$. When the sum $V+W$ of two subspaces $V$, $W$ of $\mathcal H$ is direct in the algebraic sense, we use the notation $V \dot{+} W$. We reserve the notation $V\oplus W$ to topological sums. We adopt the convention of using the same letter to denote a quadratic form $q(\cdot)$ and its polar form $q(\cdot,\cdot)$. We use the notations $\mathcal D(q)$ for the domain of a quadratic form $q\,$, $\mathcal D(L)$ for the domain of a linear operator $L$ and $\mathcal R(L)$ for its range.  The space $\mathcal D(L)$ is endowed with the norm
\[
\|x\|_{\mathcal D(L)}:=\sqrt{\|x\|^2+\|L\,x\|^2}\,,\quad\forall\,x\in\mathcal D(L)\,.
\]
We denote the resolvent set, spectrum, essential spectrum and discrete spectrum of a self-adjoint operator $T$ by $\rho(T)$, $\sigma(T)$, $\sigma_{\rm ess}(T)$ and $\sigma_{\rm disc}(T)$ respectively.

Let us briefly recall the standard Friedrichs extension method. Let $S:\mathcal D(S)\to \mathcal H$ be a densely defined operator. Assume that $S$ is symmetric, which means that $\innerproduct{S\,x}{y}= \innerproduct{x}{S\,y}$ for all $x$, $y\in \mathcal D(S)$. If the quadratic form $s(x)=\innerproduct{x}{S\,x}$ associated to $S$ is bounded from below, {\it i.e.},~if
\[
\ell_1:=\inf_{x\in \mathcal D(S)\setminus\{0\}}\; \frac{s(x)}{\|x\|^2} \;>\;-\,\infty\;,
\]
then $S$ has a natural self-adjoint extension $T$, which is called the {\it Friedrichs extension} of $S$ and can be constructed as follows (see {\it e.g.}~\cite{MR0493420} for more details). First of all, since the quadratic form $s\,$ is bounded from below and associated to a densely defined symmetric operator, it is closable in $\mathcal H$. Denote its closure by $\overline{s}$. Take \hbox{$\ell<\ell_1$}, so that  $\overline{s}(\cdot,\cdot)-\ell\innerproduct{\cdot}{\cdot}$ is a scalar product on $\mathcal D(\overline{s})$ giving it a Hilbert space structure. By the Riesz isomorphism theorem, for each $f\in \mathcal H$, there is a unique $u_f\in\mathcal D(\,\overline{s})$ such that $\overline{s}\,(v,u_f)-\ell\,\innerproduct{v}{u_f}=\innerproduct{v}{f}$ for all $v\in \mathcal D(\,\overline{s})$. Note that $u_f$ is also the unique minimizer of the functional $\mathcal I_f(u):=\frac12\,\big(\,\overline{s}(u)-\ell\,\|u\|^2\big)-\innerproduct{u}{f}$ in $\mathcal D(\,\overline{s})$. The map $f\mapsto u_f$ is linear, bounded and self-adjoint for $\innerproduct{\cdot}{\cdot}$. Its inverse is $T-\ell\,\mathrm{id}_{\mathcal H}$ and one easily checks that $T$ does not depend on $\ell$: this operator is just the restriction of $S^*$ to $\mathcal D(\,\overline{s})\cap \mathcal D(S^*)$. An important property of the Friedrichs extension is that the eigenvalues of $T$ below its essential spectrum, if they exist, can be characterized by the classical Courant-Fisher min-max principle: for every positive integer $k$, the level
\[\ell_k:=\,\inf_{\scriptstyle\begin{array}{c}\mbox{\scriptsize $V$ subspace of $\,\mathcal D(S)$}\\[-4pt] \mbox{\scriptsize dim$\,V=k$}\end{array}}\,\sup_{x\in V\setminus\{0\}}\;\frac{s(x)}{\|x\|^2}\]
is either the bottom of $\sigma_{\!\rm  ess}(T)$ (in the case $\ell_j=\ell_k$ for all $j\geq k$) or the $k$-th eigenvalue of $T$ (counted with multiplicity) below $\sigma_{\!\rm ess}(T)$.

In the special case of the Laplacian in a bounded domain $\Omega$ of~$\R^d$ with smooth boundary, $S=-\Delta:\, C^\infty_c(\Omega)\to L^2(\Omega)$, one has $\mathcal D(\,\overline{s})=H^1_0(\Omega)$ and the construction of the Frie\-drichs extension $T$ corresponds to the weak formulation in $H^1_0(\Omega)$ of the Dirichlet problem: $-\Delta u = f \hbox{ in }\Omega$, $u=0 \hbox{ on }\partial \Omega$. In other words, $u_f$ is the unique function in $H^1_0(\Omega)$ such that for all $v\in H^1_0(\Omega)$, $\int_\Omega \nabla u_f\cdot\nabla v\,dx=\int_\Omega f\,v\,dx$. So $T$ is the self-adjoint realization of the Dirichlet Laplacian. By regularity theory, we learn that $\mathcal D(T)=H^2(\Omega)\cap H^1_0(\Omega)$.

From now on in this paper, we consider a dense subspace $F$ of $\mathcal H$ and a symmetric operator $A:\,F\rightarrow \mathcal H$. We do {\it not} assume that the quadratic form $a(x):=\innerproduct{x}{Ax}$ is bounded from below, so we cannot apply the standard Friedrichs extension theorem to $A$. We introduce an orthogonal splitting $\mathcal H= \mathcal H_+\oplus\mathcal H_-$ of $\mathcal H$ as in~\eqref{Orth}. We denote by
\[
\Lambda_\pm:\mathcal H\rightarrow\mathcal H_\pm
\]
the orthogonal projectors associated to this splitting. We make the following assumptions:
\be{H1}\tag{H1}
\mbox{{\it $F_+:=\Lambda_+F\,$ and $\,F_-:=\Lambda_-F$ are subspaces of $\,F$}}
\ee
and
\be{H2}\tag{H2}
\lambda_0 := \sup_{x_-\in F_-\setminus\{0\}}\frac{a(x_-)}{\|x_-\|^2} < +\infty\;.
\ee
We also make the {\it variational gap assumption} that
\be{H3}\tag{H3}
\mbox{{\it for some $k_0\ge1$, we have $\lambda_{k_0}> \lambda_{k_0-1}=\lambda_0$}}
\ee
where the min-max levels $\lambda_k$ $(k\geq 1)$ are defined by
\be{min-max}
\lambda_k:=\inf_{\begin{array}{c}\scriptstyle V\mbox{\scriptsize subspace of }F_+\\[-2pt]\scriptstyle\mbox{\scriptsize dim}\,V=k\end{array}}\sup_{x\in(V\oplus F_-)\setminus\{0\}}\frac{a(x)}{\|x\|^2}\;.
\ee

\noindent In order to construct a distinguished self-adjoint extension of $A$, for $E>\lambda_0$ we are going to decompose the quadratic form $a-E\Vert\cdot\Vert^2$  as the difference of two quadratic forms $q_E$ and $\overline{b}_E$ with $q_E$ bounded from below and closable, while $\overline{b}_E$ is positive and closed. Before stating our main result, let us define these quadratic forms.\smallskip

\noindent
We first introduce a quadratic form $b$ on $F_-$:
\be{B}
b(x_-)=-a(x_-)=\innerproduct{x_-}{\(-\Lambda_- A\upharpoonright_{_{F_-}}\)x_-}\quad\forall\,x_-\in F_-\,.
\ee
For $E>\lambda_0\,$ it is  convenient to define the associated form
\be{Be}
b_E(x_-)=b(x_-)+E\,\Vert x_-\Vert^2\quad\forall\,x_-\in F_-\,.
\ee
As a consequence of Assumption~\eqref{H2} and of the symmetry of $\,-\Lambda_- A\upharpoonright_{_{F_-}}\,,$ we have that 
\be{closeformb}\tag{$b$}
\mbox{\parbox{10cm}{{\it $b_E$ is positive definite for all $E>\lambda_0$ and $b$ is closable in $\mathcal H_-$.}}}
\ee
 We denote by $\overline{b}$ the closure of $b$ and by $\overline{b}_E=\overline{b}+E\Vert\cdot\Vert^2$ the closure of $b_E$, their domain being $\mathcal D\big(\overline{b}\big)$. We can consider the Friedrichs extension $B$ of $\,-\Lambda_- A\upharpoonright_{_{F_-}}\,$. For every parameter $E>\lambda_0$, the operator $B+E: \mathcal D(B) \rightarrow \mathcal H_-$ is invertible with bounded inverse. This allows us to define the operator $L_E:\,F_+\rightarrow \mathcal D(B)$ such that 
\be{def L}
L_E\,x_+:= (B +E)^{-1}\Lambda_-A\,x_+\,,\quad \forall x_+\in F_+\,.
\ee

\noindent
We then introduce the subspace
\be{def Gamma}
\Gamma_E:= \big\{x_++L_E\,x_+\,:\, x_+\in F_+\big\}\subset F_+\oplus \mathcal D(B)\,.
\ee
Making an abuse of terminology justified by the isomorphism $F_+\oplus \mathcal D(B)\approx F_+\times \mathcal D(B)$, we call $\Gamma_E$ the {\it graph} of $L_E$. On this space, we define a quadratic form $q_E$ by
\be{def Q}
q_E(x_++L_E\,x_+):= \innerproduct{x_+}{(A-E)\,x_+}+\innerproduct{L_E\,x_+}{(B+E)\,L_E\,x_+}\,.
\ee
Denoting by $\overline \Gamma_E$ the closure of $\Gamma_E$ in $\mathcal H$ and by $\Pi_E$ the orthogonal projection on $\overline \Gamma_E$, we may write \[q_E(x)=\innerproduct{x}{S_Ex}\,,\quad \forall x\in \Gamma_E\,,
\]
where
\be{def S}
S_E:= \Pi_E\big( \Lambda_+(A-E)\,\Lambda_+ + \Lambda_-(B+E)\,\Lambda_-\big)\upharpoonright_{_{\Gamma_E}}.
\ee
The operator $S_E$ is symmetric and densely defined in the Hilbert space $\(\overline\Gamma_E\,,\,\innerproduct{\cdot}{\cdot}\!\upharpoonright_{_{\overline\Gamma_E\times \overline\Gamma_E}}\)$. It is one of the two Schur complements associated with the block decomposition of the operator $A-E\,\mathrm{id}_{\mathcal H}$ under the orthogonal splitting $\mathcal H=\mathcal H_+\oplus\mathcal H_-$. 
Further details on $q_E$, $S_E$ are given in Section~\ref{Section:Gen}. In particular, in Subsection~\ref{decomp} the decomposition of $a-E\Vert \cdot\Vert^2$ in terms of $q_E$, $\overline{b}_E$ is given. Note that in \cite{DES00a} (before its corrigendum~\cite{DES2023corrigendum}) as well as in \cite{EstLos-07,EstLos-08,SST20}, the form $\overline{b}$ was already present and a form analogous to $q_E$ was defined, but its domain was $F_+$ instead of $\Gamma_E$.\smallskip

\noindent The main result of this paper is as follows.
\begin{theorem}\label{Theorem:thm1} Let $A$ be a densely defined symmetric operator on the Hilbert space $\mathcal H\,$ with domain $F$. Assume~\eqref{H1}-\eqref{H2}-\eqref{H3} and take $E>\lambda_0$. With the above notations, the quadratic forms $b$ and $q_E$ are bounded from below, $b$ is closable in $\mathcal H_-$, $q_E$ is closable in $\overline \Gamma_E$ and they satisfy
\[
\mathcal D\big(\,\overline{q}_E\big)\cap\mathcal D\big(\,\overline{b}\big)=\big\{0\big\}\,.
\]
The operator $A$ admits a unique self-adjoint extension $\widetilde A$ such that 
\[
\mathcal D\big(\widetilde A\,\big)\subset \mathcal D\big(\,\overline{q}_E\big)\dot{+}\mathcal D\big(\,\overline{b}\big)\,.
\]
The domain of this extension is
\[
\mathcal D\big(\widetilde A\,\big)=\mathcal D(A^*)\cap \Big( \mathcal D\big(\,\overline{q}_E\big)\dot{+}\mathcal D\big(\,\overline{b}\big)\, \Big)
\]
and it does not depend on~$E$.\smallskip

\noindent
Writing
\[\lambda_\infty:=\lim\lambda_k\in(\lambda_0,\infty]\]
one has
 \[
\lambda_\infty= \inf \big(\sigma_{\!\rm ess} (\widetilde A) \cap (\lambda_0, + \infty)\big)\;.
\]
In addition, the numbers $\lambda_k$ ($k\ge 1$) satisfying $\lambda_0<\lambda_k<\lambda_\infty$ are all the eigenvalues -- counted with multiplicity -- \, of $\widetilde A$ in the spectral gap $(\lambda_0,\lambda_\infty)$.
\end{theorem}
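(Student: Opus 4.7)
The approach is to exploit the Schur-complement decomposition
\[
a(x) - E\|x\|^2 = q_E(y) - \overline{b}_E(z), \qquad x = x_+ + x_- \in F,
\]
where $y := x_+ + L_E x_+ \in \Gamma_E$ and $z := x_- - L_E x_+ \in \mathcal D(\overline{b})$; this identity follows from a direct expansion using $(B+E)L_E x_+ = \Lambda_- A x_+$ and the symmetry of $A$. Once the two forms are closed independently on $\overline{\Gamma}_E$ and on $\mathcal H_-$, the right-hand side provides a form-theoretic definition of $\widetilde A$. Observe that $q_E$ is bounded below while $-\overline{b}_E$ is bounded above, so neither summand alone is of Friedrichs type; the combination must be handled on the algebraic direct sum.

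Next, I would derive the variational formula
\[
q_E(x_+ + L_E x_+) = \sup_{x_- \in F_-}\big[a(x_+ + x_-) - E\|x_+ + x_-\|^2\big],
\]
which holds by the positivity of $\overline{b}_E$ together with the density of $F_-$ in $\mathcal D(\overline{b})$ for the $\overline{b}_E$-norm. This formula is the pivotal bridge between $q_E$ and the min-max levels $\lambda_k$, and assumption \eqref{H3} directly yields a lower bound on $q_E$ on $\Gamma_E$. Closability of $q_E$ on $\overline{\Gamma}_E$ then follows from this lower bound together with the fact that $q_E$ is the form of the symmetric densely defined operator $S_E$ on $\overline{\Gamma}_E$.

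The main obstacle is the transversality $\mathcal D(\overline{q}_E) \cap \mathcal D(\overline{b}) = \{0\}$. Since $\mathcal D(\overline{q}_E) \subset \overline{\Gamma}_E$ and $\mathcal D(\overline{b}) \subset \mathcal H_-$, it suffices to show $\overline{\Gamma}_E \cap \mathcal H_- = \{0\}$ within the $\overline{q}_E$-graph topology: if $y_n = x_+^n + L_E x_+^n$ is $\overline{q}_E$-Cauchy with $y_n \to z \in \mathcal H_-$, then $\Lambda_+ y_n = x_+^n \to 0$ in $\mathcal H_+$, hence $L_E x_+^n \to z$ in $\mathcal H$, and the required conclusion $z = 0$ amounts to the closability of $L_E : F_+ \subset \mathcal H_+ \to \mathcal H_-$, itself a consequence of the Schur-complement identity $(B+E) L_E = \Lambda_- A$ and symmetry of $A$ (which ensures that $\Lambda_+ A^* \upharpoonright_{F_-}$ is a densely defined adjoint of $\Lambda_- A \upharpoonright_{F_+}$). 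Once transversality is established, I define $\widetilde A$ as the restriction of $A^*$ to $\mathcal D(A^*) \cap \big(\mathcal D(\overline{q}_E) \dot{+} \mathcal D(\overline{b})\big)$; symmetry is inherited from $A^*$, and surjectivity of $\widetilde A - E$ onto $\mathcal H$ is obtained by solving, for each $f \in \mathcal H$, the form equation $\overline{q}_E(\cdot, y) - \overline{b}_E(\cdot, z) = \langle \cdot, f\rangle$ via Riesz representation on the two blocks. Uniqueness among self-adjoint extensions with domain in $\mathcal D(\overline{q}_E) \dot{+} \mathcal D(\overline{b})$ is then automatic, and $E$-independence of $\mathcal D(\widetilde A)$ follows from a resolvent identity linking $L_E$ and $L_{E'}$ for two values of the parameter.

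Finally, for the spectral identification, the bijection $x_+ \mapsto x_+ + L_E x_+$ between $F_+$ and $\Gamma_E$, extended to closures, intertwines the self-adjoint operator $T_E$ associated (via Kato's representation theorem) with $\overline{q}_E$ on $\overline{\Gamma}_E$ with a translate of $\widetilde A$ restricted to an invariant subspace. The Courant-Fisher min-max principle applied to $T_E$, combined with the variational formula for $q_E$, identifies its discrete eigenvalues with $\{\lambda_k - E : \lambda_0 < \lambda_k < \lambda_\infty\}$ and the bottom of its essential spectrum with $\lambda_\infty - E$. Undoing the shift yields the stated eigenvalue characterization in the gap $(\lambda_0, \lambda_\infty)$ and the identification $\lambda_\infty = \inf\big(\sigma_{\!\rm ess}(\widetilde A) \cap (\lambda_0, +\infty)\big)$.
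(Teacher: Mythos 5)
There is a genuine gap at the very point where your argument has to do the hard work: you reduce the transversality $\mathcal D(\overline{q}_E)\cap\mathcal D\big(\,\overline{b}\big)=\{0\}$ to the closability of $L_E:F_+\subset\mathcal H_+\to\mathcal H_-$, and you claim this closability follows from the symmetry of $A$. Symmetry does give that $\Lambda_-A\upharpoonright_{_{F_+}}$ is closable (its adjoint contains the densely defined $\Lambda_+A\upharpoonright_{_{F_-}}$), but $L_E=(B+E)^{-1}\Lambda_-A\upharpoonright_{_{F_+}}$ is the composition with the bounded operator $(B+E)^{-1}$, whose inverse is unbounded; closability is not preserved under such a composition. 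Concretely, if $x_n\to 0$ in $F_+$ and $L_Ex_n\to y$, pairing with $z=(B+E)w$, $w\in F_-$, only gives $y\perp(B+E)(F_-)$, and without essential self-adjointness of $-\Lambda_-A\upharpoonright_{_{F_-}}$ (an assumption deliberately dropped here) the range $(B+E)(F_-)$ need not be dense, so one cannot conclude $y=0$. This is precisely the obstruction that forces the geometric strategy of the paper: one does not know whether $\overline{\Gamma}_E$ is a graph, and instead one proves only the weaker, but sufficient, statement $\overline{\Gamma}_E\cap\mathcal D\big(\,\overline{b}\big)=\{0\}$ (Proposition~\ref{Prop:lem2}), using that $(B+E)^{1/2}F_-$ \emph{is} dense in $\mathcal H_-$ because $F_-$ is a form core of $B$. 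Your route, as written, silently reintroduces the hypothesis the theorem is meant to remove.

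A second substantive problem is the spectral identification. The map $x_+\mapsto x_++L_Ex_+$ does not intertwine $T_E$ with ``a translate of $\widetilde A$ restricted to an invariant subspace'': $\overline{\Gamma}_E$ is not invariant under $\widetilde A$, and the eigenvalues of $T_E$ are the levels $\ell_k(E)$, which are \emph{not} equal to $\lambda_k-E$ (only the two-sided bounds \eqref{above}--\eqref{below} hold, with equality $\ell_k(\lambda_k)=0$ exactly at $E=\lambda_k$). So one cannot ``undo the shift''; the correct statement is pointwise, namely that $0\in\sigma(T_E)$ (resp.\ $\sigma_{\rm ess}$, $\sigma_{\rm disc}$ with multiplicity) if and only if $E\in\sigma(\widetilde A)$ (resp.\ the same), and proving it requires the identity of Proposition~\ref{Prop:formulaoperator} together with the continuity of the projections $\pi_E,\pi'_E$ on $\mathcal D(\widetilde A)$ (Proposition~\ref{continuity}), which your sketch does not supply. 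Two further points are glossed over: for $k_0\ge2$ the lower bound on $q_E$ is not ``direct'' from \eqref{H3} (the form is not positive and one needs the finite-dimensional argument of Lemma~\ref{bounded below}), and for the same reason the Riesz representation theorem alone does not solve the block form equation or give surjectivity of $\widetilde A-E$; one needs invertibility of the Friedrichs extension $T_E$ for $E\in(\lambda_0,\lambda_\infty)\setminus\{\lambda_k\}$, which again rests on the spectral analysis of $T_E$, not on positivity. Likewise, symmetry of the restriction of $A^*$ to $\mathcal D(A^*)\cap\big(\mathcal D(\overline{q}_E)\dot{+}\mathcal D(\overline{b})\big)$ is not inherited automatically; it is exactly what the extended identity \eqref{egalite-operator} is for.
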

\noindent Theorem~\ref{Theorem:thm1} deserves some comments.
\\[2pt] $\bullet$
In some situations, one encounters a symmetric operator $A$ that does not satisfy \eqref{H1} but has the weaker property $\Lambda_\pm \mathcal D(A)\subset \mathcal D(\overline{A})$, where $\overline{A}$ denotes the closure of $A$. This happens for instance if one defines a Dirac-Coulomb operator on a ``minimal" domain consisting of compactly supported smooth functions, and one considers the splitting associated with the free energy projectors $\Lambda_\pm=\mathbbm 1_{\,\R_{\pm}}(D)\,$: see the example of Subsection \ref{subsection: sign-changing}. In such a case one can replace $A$ by its symmetric extension $\overline{A}\upharpoonright_{_{\Lambda_+ \mathcal D(A)\oplus\Lambda_-\mathcal D(A)}}$. Then \eqref{H1} is automatically satisfied by the new domain and it remains to check that the new operator satisfies \eqref{H2}-\eqref{H3} before applying Theorem~\ref{Theorem:thm1}.
\\[2pt] $\bullet$
In the earlier works~\cite{GS99,Griesemer1999,DES00a,DES00b,DES03,MorMul-15,M16,ELS19,SST20} on the min-max principle in gaps, one assumes that $\lambda_1>\lambda_0\,,$ which amounts to consider assumption~\eqref{H3} with $k_0=1$. Allowing $k_0\ge 2$ can be important in some applications: see Section~\ref{Section:DC}. The abstract min-max principle for eigenvalues in the case $k_0\ge 2$ was first considered in~\cite{DES06}, but in that paper~\eqref{H2} was replaced by a much more restrictive assumption. Moreover, the proof in~\cite{DES06} was based on the arguments of~\cite{DES00a}, so it suffered from the same closability issue solved in~\cite{SST20} and~the corrigendum~\cite{DES2023corrigendum} of~\cite{DES00a}: the closure of $L_E$ was used but its existence was not proved.
\\[2pt] $\bullet$ Compared with~\cite{EstLos-08,SST20}, another important novelty is that for constructing $\widetilde A$ we do not need the operator $\,-\Lambda_- A\upharpoonright_{_{F_-}}\,$ to be self-adjoint or essentially self-adjoint in $\mathcal H_-\,$. This assumption was used in~\cite{SST20} to prove that $L_E$ is closable, while in the present work this closability is not needed thanks to a new geometric viewpoint: instead of trying to close $L_E$ we consider the subspace $\overline{\Gamma}_E$, which of course always exists, but is not necessarily a graph. As pointed out in~\cite{SST20}, essential self-adjointness of $\,-\Lambda_- A\upharpoonright_{_{F_-}}\,$ holds true in many important situations. However there are also interesting examples for which it does not hold true. An application to Dirac-Coulomb operators in which the essential self-adjointness of $\,-\Lambda_- A\upharpoonright_{_{F_-}}\,$ does not hold true is described in Subsection~\ref{subsection: sign-changing}. Let us give a simpler example: on the domain $F:=\(C^\infty_c(\Omega, \R)\)^2 $ consider the operator
\be{exampleDelta}
A\begin{pmatrix} u\\v\end{pmatrix}:= \begin{pmatrix} -\Delta u\\\Delta v\end{pmatrix}
\ee
taking values in $\mathcal H= \(L^2(\Omega, \R)\)^2$, where $\Omega$ is a bounded open subset of $\,\R^d$ with smooth boundary. In this case one takes 
\[
\Lambda_+\begin{pmatrix} u\\v\end{pmatrix}=\begin{pmatrix} u\\0\end{pmatrix}\,,\quad\Lambda_-\begin{pmatrix} u\\v\end{pmatrix}=\begin{pmatrix} 0\\v\end{pmatrix}
\]
and~\eqref{H1} holds true.
If $\lambda(\Omega)>0$ is the smallest eigenvalue of the Dirichlet Laplacian on $\Omega$, we have $\lambda_0=-\lambda(\Omega)$ in~\eqref{H2} and $\lambda_1= \lambda(\Omega)>\lambda_0$, so~\eqref{H3} with $k_0=1$  holds true. But $-\Lambda_- A\upharpoonright_{_{F_-}}$ is the Laplacian defined on the minimal domain $\{0\}\times C^\infty_c(\Omega, \R)\,$, and it is well-known that this operator is not essentially self-adjoint in $\{0\}\times L^2(\Omega, \R)\,$, so one cannot apply the abstract results of~\cite{EstLos-08,SST20}.
In this example, the distinguished extension given by Theorem~\ref{Theorem:thm1} is easily obtained as follows. One checks that $\mathcal D\big(\,\overline{b}\big)= \{0\}\times H^1_0(\Omega, \R)\,$, $\mathcal D(\overline{q}_E)= H^1_0(\Omega, \R)\times \{0\}$ and $\mathcal D\big(\,A^*\big)= \left( H^2(\Omega, \R)\right)^2\,$. So, denoting by $\Delta^{(D)}$ the Dirichlet Laplacian with domain $H^2(\Omega, \R)\cap H^1_0(\Omega, \R)\,$, one finds that
\[\widetilde A=\begin{pmatrix} -\Delta^{(D)} & 0\\0 & \Delta^{(D)}\end{pmatrix}\,.\]
\\[2pt] $\bullet$ In~\cite{SST20}, it is proved that the extension $\widetilde A$ is unique among the self-adjoint extensions whose domain is included in $\Lambda_+\mathcal D(\overline{q}_E) \,\oplus\, \mathcal H_-$, assuming that the operator $-\Lambda_-A\upharpoonright_{_{F_-}}\,$ is essentially self-adjoint. But the above example shows that without this assumption, such a uniqueness result does not hold true in general. Indeed, since $\Delta: C^\infty_c(\Omega, \R) \rightarrow L^2(\Omega, \R)$ is not essentially self-adjoint, the operator $A$ given by \eqref{exampleDelta} has infinitely many self-adjoint extensions with domains included in $\Lambda_+\mathcal D(\overline{q}_E) \,\oplus\, \, \mathcal H_-$. For instance, one can take
\[\hat{A}=\begin{pmatrix} -\Delta^{(D)} & 0\\0 & \Delta^{(N)}\end{pmatrix}\]
with $\Delta^{(N)}$ the self-adjoint extension of $\Delta$ associated with the Neumann boundary condition $\nabla v\cdot n=0\,$, where $n$ denotes the outward normal unit vector on  $\partial\Omega$. Obviously, $\hat{A}\neq \widetilde A$.
\\[2pt] $\bullet$ As we will see in Subsection \ref{subsection: attractive}, when dealing with the Dirac-Coulomb operator $D_{-\nu/|x|}$ with Talman's splitting it is natural to choose $F=C^\infty_c(\R^3\setminus\{0\},\C^4)$. Then the large and small two-components appearing in Talman's min-max principle are taken in $C^\infty_c(\R^3\setminus\{0\},\C^2)$. But other functional spaces can be used for these components. In~\cite{MorMul-15,M16} an abstract min-max principle is stated in the setting of quadratic forms and applied to Talman's min-max principle with $H^{1/2}(\R^3,\C^2)$ as space of large and small two-components, under the condition $\nu\in\left(0,1\right)$. However it seems that some closability issues are present in the proof of the abstract principle, as in~\cite{DES00a}. We do not know whether the geometric approach of the present paper could be adapted to the framework of~\cite{MorMul-15} in order to avoid these closability issues without additional assumptions. Note that by a completely different approach, Talman's min-max principle is proved in \cite{ELS19} for all $\nu\in (0,1]$, with arbitrary spaces of large and small two-components lying between $C^\infty_c(\R^3\setminus\{0\},\C^2)$ and $H^{1/2}(\R^3,\C^2)$.\medskip

\noindent Concerning the proof of Theorem \ref{Theorem:thm1}, we emphasize three main facts:

\noindent
(1) {\it The quadratic form $q_E(x)=\innerproduct{x}{S_Ex}$ is bounded from below for all $E>\lambda_0$, so that it has a closure $\overline{q}_E$ in $\overline{\Gamma}_E$ and $S_E$ has a Friedrichs extension $T_E$}. This fact will allow us to define the distinguished extension $\widetilde A$ as the restriction of $A^*$ to $\mathcal D(A^*)\cap\(\mathcal D\big(\overline{q}_E\big)+\mathcal D\big(\overline{b}\big)\)$. We will prove its symmetry thanks to a formula expressing the product $\innerproduct{\(\widetilde A-E\)X}{U}$ in terms of $\overline{q}_E$ and $\overline{b}_E\,$, for $X\in \mathcal D(\widetilde A)$ and $U\in \mathcal D\big(\overline{q}_E\big)+\mathcal D\big(\overline{b}\big)$. This formula will be deduced by density arguments from a decomposition of $a-E\Vert\cdot\Vert^2\,$ as the difference of ${q}_E$ and $\overline{b}_E$.

\noindent
(2) {\it The self-adjoint operator $T_E$ is invertible for all $\lambda_0<E<\lambda_{k_0}$}. Combined with the (obvious) invertibility of $B+E$, this fact will allow us to construct the inverse of the distinguished extension $\widetilde A-E$, by using once again the formula relating $\innerproduct{\(\widetilde A-E\)X}{U}$ to $\overline{q}_E$ and $\overline{b}_E$. Then, by a classical argument, we will conclude that $\widetilde A$ is self-adjoint.

\noindent
(3) Although we are not able to prove that $\overline{\Gamma}_E$ is a graph above $\mathcal H_+$, we will see that the sum $\overline{\Gamma}_E+ \mathcal D\big(\,\overline{b}\big)$ is direct in the algebraic sense. More importantly, if we denote by $\pi_E:\,\overline{\Gamma}_E\dot{+} \mathcal D\big(\,\overline{b}\big)\to\overline{\Gamma}_E$ and $\,\pi'_E:\,\overline{\Gamma}_E\dot{+} \mathcal D\big(\,\overline{b}\big)\to\mathcal D\big(\,\overline{b}\big)$ the associated projectors, {\it the linear map
\[X\in\mathcal D(\widetilde A)\mapsto \big(\pi_E X,\pi'_E X\big)\in \overline{\Gamma}_E\times \mathcal D\big(\,\overline{b}\big)\]
is continuous for the norms $\Vert X\Vert_{\mathcal D(\widetilde A)}$ and $\Vert \pi_E X\Vert+\Vert \pi'_E X\Vert$}. Thanks to this fact, we will be able to give a relation between the spectra of $\widetilde A$ and $T_E$ which will allow us to prove the min-max principle for the eigenvalues of $\widetilde A$ above $\lambda_0$.

\noindent
For $k_0=1$ the facts (1) and (2) are a consequence of the positivity of $q_E$ for $E\in (\lambda_0,\lambda_{1})$ and of the Riesz isomorphism theorem. When $k_0\geq 2$ the positivity is lost, but these two key facts still hold true for other reasons to be given in the proofs of Proposition ~\ref{Prop:thm1-abis} and Lemma~\ref{bounded below} and in Remark~\ref{k0=1}.
\bigskip

\noindent The paper is organized as follows. In Section~\ref{Section:Gen}, we study the quadratic form $q_E$ under Assumptions~\eqref{H1}-\eqref{H2}. In Section~\ref{bounded-below}, under the additional condition~\eqref{H3} we prove that $q_E$ is bounded from below, then we study its closure $\overline{q}_E$ and the Friedrichs extension $T_E$. The self-adjoint extension $\widetilde A$ is constructed in Section~\ref{Section:mainproof} and the abstract version of Talman's principle for its eigenvalues is proved in Section~\ref{Section:min-max}, which ends the proof of Theorem~\ref{Theorem:thm1}. Section~\ref{Section:DC} is devoted to Dirac-Coulomb operators with charge configurations that are not covered by earlier abstract results.

\section{The quadratic form \texorpdfstring{$q_E$}{qE}}\label{Section:Gen}

The results of this section are essentially contained in the earlier works ~\cite{DES00a,EstLos-08,SST20}, we recall them here for the reader's convenience. We first give a more intuitive interpretation of the objects $L_E$, $q_E$ that have been defined in the Introduction. Then we define a sequence of min-max levels for $q_E$ that will be related  to the min-max levels $\lambda_k$ of $A$ in Section~\ref{Section:min-max}. 

\subsection{A family of maximization problems}\label{decomp}
 
In this subsection we motivate the definition of $L_E$ and $q_E$ given in the Introduction. Consider the eigenvalue equation $(A-E)x=0$ with unknowns $x\in F$ and $E\in \mathbb R$. Writing $x_+:=\Lambda_+x\,$, $y_-:=\Lambda_-x$ and projecting both sides of the equation on $\mathcal H_-$,
 one gets
\[\Lambda_-Ax_++\Lambda_-(A-E)y_-=0\,\]
which is also the Euler-Lagrange equation for the problem
\[\sup_{y_-\in F_-} \innerproduct{x_++y_-}{(A-E)\,(x_++y_-)}\,.\]
Given $x_+$ in $F_+$ one can try to look for a solution $y_-$. In general the problem is not solvable in $F_-$ but one can consider a larger space in which a solution exists. We denote by $L_Ex_+$ the generalized solution. In order to make these explanations more precise, we need to express the quadratic form $a-E\Vert \cdot\Vert^2\,$ in terms of $q_E$, $\overline{b}_E$.
 
 Given $x_+\in F_+$ and $E>\lambda_0$, let $\;\varphi_{E, x_+}: F_- \rightarrow \R$ be defined by
\[
\varphi_{E, x_+}(y_-):= \innerproduct{x_++y_-}{(A-E)\,(x_++y_-)} \,,\quad\forall\,y_-\in F_-\,,
\]
One easily sees that $\varphi_{E, x_+}$ has a unique continuous extension to $\mathcal D\big(\overline{b}\big)$ which is the strictly concave function
\[
\overline{\varphi}_{E, x_+}:y_-\in \mathcal D\big(\overline{b}\big)\mapsto \innerproduct{x_+}{(A-E)\,(x_+)}+2\Re\innerproduct{Ax_+}{y_-} -\overline{b}_E(y_-) \,.
\]

The main result of this subsection is
\begin{proposition}\label{Prop:thm1-a}
Let $A$ be a symmetric operator on the Hilbert space $\mathcal H$. Assume hypotheses~\eqref{H1}-\eqref{H2}, take $E>\lambda_0$ and remember the definition~\eqref{def Q} of $q_E$. Then:

\noindent
$\bullet$ One has the decomposition
\be{concave}
\innerproduct{X}{(A-E)X}= q_E(\Lambda_+X+L_E\,\Lambda_+X)- \overline{b}_E(\Lambda_-X-L_E\,\Lambda_+X)\;\, ,\quad\forall X\in F\,.
\ee
$\bullet$ For each $x_+\in F_+$, $L_E x_+$ is the unique maximizer of  $\;\,\overline{\varphi}_{E, x_+}$ and one has
\be{sup}
q_E(x_++L_Ex_+)=\overline{\varphi}_{E, x_+}(L_Ex_+)=\max_{y_-\in F_-} \overline{\varphi}_{E, x_+}(y_-)=\sup_{y_-\in F_-} \varphi_{E, x_+}(y_-)\;.
\ee
\end{proposition}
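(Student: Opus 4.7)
The plan is to verify statement (2) first by recognizing $\overline{\varphi}_{E,x_+}$ as a strictly concave quadratic functional on the Hilbert space $(\mathcal D(\overline{b}),\overline{b}_E)$, identify its unique maximizer as $L_Ex_+$, and then get the decomposition in (1) essentially for free by a direct algebraic computation.

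I would start by checking that $\overline{\varphi}_{E,x_+}$ is a well-defined, strictly concave function on $\mathcal D(\overline{b})$: the quadratic part $-\overline{b}_E$ is continuous and strictly negative definite by assumption \eqref{closeformb}, and the linear part $y_-\mapsto 2\Re\innerproduct{Ax_+}{y_-}=2\Re\innerproduct{\Lambda_-Ax_+}{y_-}$ is continuous on $\mathcal H_-$ hence on $\mathcal D(\overline{b})$. The equality with $\varphi_{E,x_+}$ on $F_-$ is immediate from $\innerproduct{x_+}{x_-}=0$, symmetry of $A$, and $b(x_-)=-a(x_-)$. Next, viewing $\overline{b}_E$ as a scalar product on $\mathcal D(\overline{b})$, the Riesz representation theorem produces a unique $y^\ast\in\mathcal D(\overline{b})$ such that
\[
\overline{b}_E(y^\ast,v)=\innerproduct{\Lambda_-Ax_+}{v}\quad\forall v\in\mathcal D(\overline{b})\,,
\]
and this $y^\ast$ is the unique maximizer of $\overline{\varphi}_{E,x_+}$ by strict concavity. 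Since $L_Ex_+=(B+E)^{-1}\Lambda_-Ax_+\in\mathcal D(B)\subset\mathcal D(\overline{b})$ and $\overline{b}_E(L_Ex_+,v)=\innerproduct{(B+E)L_Ex_+}{v}=\innerproduct{\Lambda_-Ax_+}{v}$ for every $v\in\mathcal D(\overline{b})$, uniqueness forces $y^\ast=L_Ex_+$.

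To finish (2), I would plug $L_Ex_+$ into $\overline{\varphi}_{E,x_+}$ and use $(B+E)L_Ex_+=\Lambda_-Ax_+$ to reduce the two cross terms: writing $u:=\innerproduct{\Lambda_-Ax_+}{L_Ex_+}$, one gets $2\Re u-\overline u=u$, so
\[
\overline{\varphi}_{E,x_+}(L_Ex_+)=\innerproduct{x_+}{(A-E)x_+}+\innerproduct{L_Ex_+}{(B+E)L_Ex_+}=q_E(x_++L_Ex_+).
\]
The outer equality $\sup_{y_-\in F_-}\varphi_{E,x_+}(y_-)=\overline{\varphi}_{E,x_+}(L_Ex_+)$ then follows because $F_-$ is dense in $\mathcal D(\overline{b})$ for the form norm, and $\overline{\varphi}_{E,x_+}$ is continuous for that norm.

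For (1), set $x_+:=\Lambda_+X$, $x_-:=\Lambda_-X$ (both in $F$ by \eqref{H1}) and expand:
\[
\innerproduct{X}{(A-E)X}=\innerproduct{x_+}{(A-E)x_+}+2\Re\innerproduct{Ax_+}{x_-}-\overline{b}_E(x_-)=\varphi_{E,x_+}(x_-)=\overline{\varphi}_{E,x_+}(x_-).
\]
On the other hand, polarizing $\overline{b}_E(x_--L_Ex_+)=\overline{b}_E(x_-)-2\Re\overline{b}_E(x_-,L_Ex_+)+\overline{b}_E(L_Ex_+)$ and using $\overline{b}_E(x_-,L_Ex_+)=\innerproduct{x_-}{(B+E)L_Ex_+}=\innerproduct{x_-}{Ax_+}$, the right-hand side of \eqref{concave} collapses to the same expression $\innerproduct{x_+}{(A-E)x_+}+2\Re\innerproduct{Ax_+}{x_-}-\overline{b}_E(x_-)$, proving~\eqref{concave}.

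I do not anticipate a serious obstacle: the only point requiring some care is justifying that the continuous extension of $\varphi_{E,x_+}$ from $F_-$ to $\mathcal D(\overline{b})$ is the stated one, which uses that $\mathcal D(\overline{b})$ embeds continuously into $\mathcal H_-$ (so that $y_-\mapsto\innerproduct{\Lambda_-Ax_+}{y_-}$ extends continuously) and that the closure of $b_E$ on $F_-$ is precisely $\overline{b}_E$. Everything else is Hilbert-space Riesz representation together with the symmetry identities built into the Friedrichs construction of $B$.
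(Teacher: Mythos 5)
Your proposal is correct: every step checks out, including the identification $\overline{b}_E(w,v)=\innerproduct{w}{(B+E)v}$ for $v\in\mathcal D(B)$, $w\in\mathcal D\big(\,\overline{b}\big)$, which is what both your polarization step and the evaluation $\overline{\varphi}_{E,x_+}(L_Ex_+)=q_E(x_++L_Ex_+)$ rely on. The route is essentially the paper's with the logical order reversed: the paper first proves \eqref{concave} by completing the square on $F$ (writing $z_-=\Lambda_-X-L_E\Lambda_+X$), and then \eqref{sup} comes for free, since the identity $\overline{\varphi}_{E,x_+}(y_-)=q_E(x_++L_Ex_+)-\overline{b}_E(y_--L_Ex_+)$ on $\mathcal D\big(\,\overline{b}\big)$ exhibits $L_Ex_+$ as the unique maximizer directly from positive definiteness of $\overline{b}_E$ and density of $F_-$. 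You instead establish the maximizer first via the Riesz representation theorem in the form Hilbert space $\big(\mathcal D\big(\,\overline{b}\big),\overline{b}_E\big)$ and then verify \eqref{concave} by polarization; this is perfectly valid, the only difference being that the Riesz step is machinery the paper's completed square makes unnecessary, while your version has the small merit of making the strict concavity and uniqueness of the maximizer explicit before the decomposition is even stated.
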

\begin{proof} If $X\in F$, taking $x_+:=\Lambda_+X\in F_+$, $y_-:=\Lambda_-X\in F_-$ and $z_-:= y_--L_E\,x_+\in \mathcal D(B)$ we obtain
\begin{align*}
\innerproduct{X}{(A-E)X}&= \innerproduct{x_+}{(A-E)\,x_+}+2\,\Re\innerproduct{A\,x_+}{y_-}-\innerproduct{y_-}{(B+E)\,y_-}\\
&= \innerproduct{x_+}{(A-E)\,x_+}+2\,\Re\innerproduct{L_E\,x_+}{(B+E)\,y_-}-\innerproduct{y_-}{(B+E)\,y_-}\\
&= \innerproduct{x_+}{(A-E)\,x_+}+\innerproduct{L_E\,x_+}{(B+E)\,L_E\,x_+}\\
&\hspace*{4cm}+\Re \innerproduct{L_E\,x_+}{(B+E)\,z_-} -\Re \innerproduct{z_-}{(B+E)\,y_-}\\
&= \innerproduct{x_+}{(A-E)\,x_+}+\innerproduct{L_E\,x_+}{(B+E)\,L_E\,x_+}-\innerproduct{z_-}{(B+E)\,z_-}\,,
\end{align*}
which proves~\eqref{concave}. Now, given $x_+\in F_+$ this identity can be rewritten
in the form
\[ \varphi_{E, x_+}(y_-)=q_E(x_++L_E\,x_+)- \overline{b}_E(y_--L_E\,x_+)\,,\quad\forall y_-\in F_-\,.\]
By density of $F_-$ in the Hilbert space $\(\mathcal D(\overline{b}),\overline{b}_E(\cdot,\cdot)\)$ one thus has
\[ \overline{\varphi}_{E, x_+}(y_-)=q_E(x_++L_E\,x_+)- \overline{b}_E(y_--L_E\,x_+)\,,\quad\forall y_-\in \mathcal D\big(\overline{b}\big)\]
and by the positivity of $\overline{b}_E$ one concludes that~\eqref{sup} holds true,
which completes the proof.\end{proof}

\subsection{The min-max levels for \texorpdfstring{$q_E$}{qE}}\label{dependence}

If assumptions~\eqref{H1} and~\eqref{H2} hold true, to each $E>\lambda_0$ we may associate the nondecreasing sequence of min-max levels $\Big(\ell_k(E)\Big)_{k\geq 1}$ defined by
\be{versionreguliere}
\ell_k(E)\;:=\;\inf_{\scriptstyle\begin{array}{c}\mbox{\scriptsize $V$ subspace of $\Gamma_E$}\\[-4pt] \mbox{\scriptsize dim$\,V=k$}\end{array}}\,\sup_{x\in V\setminus\{0\}}\;\frac{ q_E(x)}{\|x\|^2}\;\;\in\; [-\infty,+\infty)\,.
\ee
We may also define the (possibly infinite) multiplicity numbers
\be{mult}
m_k (E) \; := \;{\rm card} \bigl\{ k' \ge 1 \,:\, \,\ell_{k'} (E) = \ell_k (E) \bigr\} \,\ge \,1\,.
\ee
In this subsection we analyse the dependence on $E$ of the quadratic form $q_E$ and its associated min-max levels. The results are summarized in the following proposition:
\begin{proposition}\label{Prop:QE-comparison}
Assume that {\rm~\eqref{H1}-\eqref{H2}} of Theorem~\ref{Theorem:thm1} are satisfied. Then:

$\bullet$ For all $\lambda_0<E<E'$ and for all $x_+\in F_+$, we have
\be{ineqq1}
\big\|x_++L_{E'}x_+\big\|\le\big\|x_++L_E\,x_+\big\|\le\frac{E'-\lambda_0}{E-\lambda_0}\,\big\|x_++L_{E'}x_+\big\|
\ee
and
\be{ineqq2}
(E'-E)\,\big\|x_++L_{E'}x_+\big\|^2\le q_E\big(x_++L_E\,x_+\big)-q_{E'}\big(x_++L_{E'}x_+\big)\le (E'-E)\,\big\|x_++L_E\,x_+\big\|^2\,.
\ee

$\bullet$ For every positive integer $k$ and all $\lambda>\lambda_0$, one has
\be{above}\ell_k(\lambda)\leq \lambda_k-\lambda\,.\ee

$\bullet$ For every positive integer $k$, if $\lambda_k>\lambda_0\,$ then for all $\lambda>\lambda_0$, one has
\be{below}\ell_k(\lambda)\geq (\lambda_k-\lambda)\left(\frac{\lambda-\lambda_0}{\lambda_k-\lambda_0}\right)^2\,.\ee
As a consequence, when $\lambda_k>\lambda_0\,$ the min-max level $\,\ell_k(\lambda)$ is finite for every $\lambda>\lambda_0$. It is positive when $\lambda_0<\lambda<\lambda_k$, negative when $\lambda>\lambda_k$ and one has $\ell_k ( \lambda) \,= \,0$ if and only if $\lambda=\lambda_k$. Therefore, the formula
$m_k(\lambda_k)=\,{\rm card} \bigl\{ k'
\ge 1 \,:\, \,\lambda_{k'} = \lambda_k \bigr\} \,$ holds true.
\end{proposition}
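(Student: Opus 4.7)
The plan is to dispatch the three bullets in order, using the variational identity of Proposition \ref{Prop:thm1-a} and the defining relation $(B+E)L_E x_+ = \Lambda_- Ax_+$ throughout.

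For \eqref{ineqq1}--\eqref{ineqq2}, subtraction of the defining relations for $L_E$ and $L_{E'}$ yields the resolvent-type identity $L_E x_+ - L_{E'}x_+ = (E'-E)(B+E)^{-1}L_{E'}x_+$. Combined with $\|(B+E)^{-1}\| \leq 1/(E-\lambda_0)$ (consequence of $B \geq -\lambda_0$) and the orthogonality $x_+ \perp L_E x_+$, this produces the two halves of \eqref{ineqq1}: the upper bound directly, and the lower bound by pairing the identity with $L_{E'}x_+$. For \eqref{ineqq2}, the trivial identity $\varphi_{E,x_+}(y_-) - \varphi_{E',x_+}(y_-) = (E'-E)\|x_+ + y_-\|^2$ evaluated at the maximizers $L_E x_+$ of $\overline{\varphi}_{E,x_+}$ and $L_{E'}x_+$ of $\overline{\varphi}_{E',x_+}$ on $\mathcal D(\overline b)$, combined with Proposition \ref{Prop:thm1-a}, yields the sandwich estimate.

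For the upper bound \eqref{above}, test with $L_\lambda(V_+) \subset \Gamma_\lambda$. Extending $a$ continuously to $F_+ + \mathcal D(\overline b)$ by $a_{\mathrm{ext}}(x_+ + y_-) := a(x_+) + 2\Re\langle Ax_+, y_-\rangle - \overline b(y_-)$ and invoking \eqref{sup}, one checks that $q_\lambda(x_+ + L_\lambda x_+)/\|x_+ + L_\lambda x_+\|^2 = a_{\mathrm{ext}}(x_+ + L_\lambda x_+)/\|x_+ + L_\lambda x_+\|^2 - \lambda$. Since $x_+ + L_\lambda x_+ \in V_+ \oplus \mathcal D(\overline b)$ and density of $F_-$ in $\mathcal D(\overline b)$ for the $\overline b_E$-norm gives $\sup_{V_+ \oplus \mathcal D(\overline b)}(a_{\mathrm{ext}}/\|\cdot\|^2) = \sup_{V_+ \oplus F_-}(a/\|\cdot\|^2)$, this ratio is bounded above by $\sup_{V_+ \oplus F_-}(a/\|\cdot\|^2) - \lambda$; taking supremum over $V_+$ and then infimum gives \eqref{above}.

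For the lower bound \eqref{below}, introduce $M(\xi_+) := \sup_{y_- \in F_-} a(\xi_+ + y_-)/\|\xi_+ + y_-\|^2$, so that iterated suprema yield $\sup_{V_+ \oplus F_-}(a/\|\cdot\|^2) = \sup_{\xi_+ \in V_+} M(\xi_+) \geq \lambda_k$. The key observation is that whenever $\mu \in (\lambda_0, M(\xi_+)]$, the quantity $R(\mu, \xi_+) := q_\mu(\xi_+ + L_\mu \xi_+)/\|\xi_+ + L_\mu \xi_+\|^2$ is nonnegative, because the supremum defining $q_\mu$ through \eqref{sup} contains the value $(M(\xi_+) - \mu)\|\xi_+ + L_{M(\xi_+)}\xi_+\|^2 \geq 0$. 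Applying \eqref{ineqq1} and \eqref{ineqq2} to the pair $(\lambda, \mu)$ and using $R(\mu, \xi_+) \geq 0$ then gives
\[
R(\lambda, \xi_+) \geq (\mu - \lambda)\Bigl(\frac{\lambda - \lambda_0}{\mu - \lambda_0}\Bigr)^2\,,
\]
whether $\lambda < \mu$ (positive bracket in \eqref{ineqq2}, using the lower bound of the norm ratio in \eqref{ineqq1}) or $\lambda > \mu$ (possibly negative bracket, using the upper bound of the norm ratio so that the product remains a valid lower bound). Choosing $\xi_+^n \in V_+$ with $M(\xi_+^n) \to \sup_{V_+} M \geq \lambda_k$ and setting $\mu_n := \min(M(\xi_+^n), \lambda_k) \to \lambda_k$, we obtain $\sup_{V_+} R(\lambda, \cdot) \geq R(\lambda, \xi_+^n) \geq (\mu_n - \lambda)((\lambda-\lambda_0)/(\mu_n-\lambda_0))^2$, whose limit as $n \to \infty$ is $(\lambda_k - \lambda)((\lambda-\lambda_0)/(\lambda_k-\lambda_0))^2$ by continuity; since $V_+$ was arbitrary of dimension $k$, this proves \eqref{below}. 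The consequences follow immediately: the two bounds together force finiteness of $\ell_k(\lambda)$ for every $\lambda > \lambda_0$, the common sign with $\lambda_k - \lambda$ gives the equivalence $\ell_k(\lambda) = 0 \iff \lambda = \lambda_k$, and the multiplicity identity $m_k(\lambda_k) = \#\{k' : \lambda_{k'} = \lambda_k\}$ then reads off from the bounds applied to each $k'$.

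The main obstacle is carefully tracking signs in the lower-bound argument, particularly in the boundary case where $\sup_{V_+} M = \lambda_k$ is not attained so that $M(\xi_+^n) < \lambda_k$ strictly: one must invoke either the upper bound or the lower bound of the norm ratio in \eqref{ineqq1} depending on whether $\lambda$ lies below or above $\mu_n$, so as to preserve a common lower-bound formula that passes to the limit $\mu_n \to \lambda_k$.
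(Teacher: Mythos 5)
Your handling of the first two bullets is correct and close to the paper's own route: \eqref{ineqq1} via the resolvent identity $L_E x_+-L_{E'}x_+=(E'-E)(B+E)^{-1}L_{E'}x_+$ together with $\|(B+E)^{-1}\|\le (E-\lambda_0)^{-1}$ and positivity of $(B+E)^{-1}$ (the paper uses the functional calculus inequality $(t+E')^{-1}\le (t+E)^{-1}\le \frac{E'-\lambda_0}{E-\lambda_0}(t+E')^{-1}$, which is the same content); \eqref{ineqq2} by evaluating the two maximizers against each other exactly as in the paper; and \eqref{above} by testing on the graph of $L_\lambda$ over $V_+$ and using \eqref{sup} plus density of $F_-$ in $\mathcal D\big(\,\overline{b}\big)$. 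The transfer step in the lower bound, from $q_\mu(\xi_++L_\mu\xi_+)\ge 0$ to $R(\lambda,\xi_+)\ge(\mu-\lambda)\big(\tfrac{\lambda-\lambda_0}{\mu-\lambda_0}\big)^2$ with the correct half of \eqref{ineqq1} chosen according to the sign of $\mu-\lambda$, also matches the paper, as do the concluding statements (finiteness, sign, multiplicity).

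The gap is in the justification of your ``key observation''. Writing $M=M(\xi_+)$, the value that the supremum in \eqref{sup} actually contains at $y_-=L_{M}\xi_+$ is $\overline{\varphi}_{\mu,\xi_+}(L_{M}\xi_+)=q_{M}(\xi_++L_{M}\xi_+)+(M-\mu)\,\|\xi_++L_{M}\xi_+\|^2$, not $(M-\mu)\,\|\xi_++L_{M}\xi_+\|^2$; identifying the two presupposes $q_{M}(\xi_++L_{M}\xi_+)\ge 0$, which is precisely the endpoint case $\mu=M$ of the claim you are proving, so the argument is circular there. For $\mu<M$ the claim is trivial (some $y_-\in F_-$ has Rayleigh quotient above $\mu$, so the sup in \eqref{sup} is positive), but at $\mu=M$ every competitor in the sup is $\le 0$ and nonnegativity is not free, since a maximizing sequence for the ratio may have exploding norm; and your scheme does use the endpoint, because $\mu_n=\min(M(\xi_+^n),\lambda_k)$ equals $M(\xi_+^n)$ whenever $M(\xi_+^n)\le\lambda_k$. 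The repair is easy with tools you already have: either prove the endpoint case by continuity of $E\mapsto q_E(\xi_++L_E\xi_+)$, which follows from \eqref{ineqq1}--\eqref{ineqq2} (this is exactly the paper's $\varepsilon$-argument with $E=\lambda_k-\varepsilon$, $E'=\lambda_k$), or avoid it altogether by choosing $\mu_n$ strictly below $\min(M(\xi_+^n),\lambda_k)$ and converging to $\lambda_k$, for which the strict case suffices and the limiting bound is unchanged (the case $M(\xi_+^n)=+\infty$ is then harmless). Two further points should be said explicitly: the inequality $\sup_{\xi_+\in V_+\setminus\{0\}}M(\xi_+)\ge\lambda_k$ uses $\lambda_k>\lambda_0$ to discard the purely negative directions, and \eqref{below} requires treating an arbitrary $k$-dimensional subspace of $\Gamma_\lambda$, which is legitimate because any such subspace is the graph of $L_\lambda$ over its ($k$-dimensional) image under $\Lambda_+$.
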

\begin{proof} Both formula \eqref{ineqq1} and \eqref{ineqq2} as well as their detailed proof can be found in~\cite[Lemma 2.1]{DES00a} and \cite[Lemma~2.4]{SST20}, so here we just  give the main arguments. In order to prove \eqref{ineqq1} one can start from the fact that for all $t\geq-\lambda_0$, $(t+E')^{-1}\leq (t+E)^{-1}\leq \frac{E'-\lambda_0}{E-\lambda_0}(t+E')^{-1}$. Then one can use the inclusion $\sigma(B)\subset [-\lambda_0,\infty)$ and the definition of $L_E$. In order to prove $\eqref{ineqq2}$, one notices that this formula is equivalent to the two inequalities $\overline{\varphi}_E(L_{E'}x_+)\leq q_E(x_++L_Ex_+)$ and $\overline{\varphi}_{E'}(L_{E}x_+)\leq q_{E'}(x_++L_{E'}x_+)$, which both hold true thanks to \eqref{sup}.

\noindent We now prove \eqref{above} and \eqref{below}.
 
By definition of $\lambda_k$, for each $\varepsilon>0$ there is a $k$-dimensional subspace $V_\varepsilon$ of $F_+$ such that for all $x_+\in V_\varepsilon$ and $y_-\in F_-$, $a(X)\leq (\lambda_k+\varepsilon) \Vert X\Vert^2$ with $X=x_++y_-$. If $E\in (\lambda_0,\infty)$ this inequality can be rewritten as $\varphi_{E,x_+}(y_-)\leq (\lambda_k-E+\varepsilon)\Vert x_++y_-\Vert^2$. By a density argument one infers that the inequality
$$\overline{\varphi}_{E,x_+}(y_-)\leq (\lambda_k-E+\varepsilon)\Vert x_++y_-\Vert^2$$
holds true for all $y_-\in\mathcal D(\overline{b})$. Choosing $y_-=L_Ex_+$ and using \eqref{sup}, one gets the estimate $q_E(x)\leq (\lambda_k-E+\varepsilon)\Vert x\Vert^2$ with $x=x_++L_Ex_+$, hence
\[\sup_{x\in W_\varepsilon\setminus\{0\}}\frac{q_E(x)}{\Vert x\Vert^2}\leq \lambda_k-E+\varepsilon\]
with $W_\varepsilon:=\{x\in \Gamma_E\,:\,\Lambda_+x\in V_\varepsilon\}\,.$ Since $\varepsilon$ is arbitrary and ${\rm dim}(W_\varepsilon)=k$, we conclude that
\eqref{above} holds true.

On the other hand, using once again the definition of $\lambda_k$, we find that for each $\varepsilon>0$ and each $k$-dimensional subspace $W$ of $\Gamma_{\lambda_k}$, there is a nonzero vector $x_\varepsilon$ in the $k$-dimensional space $V:=\Lambda_+W\subset F_+$ and a vector $y_\varepsilon\in F_-$ such that $a(X_\varepsilon)\geq (\lambda_k-\varepsilon) \Vert X_\varepsilon\Vert^2$ with $X_\varepsilon=x_\varepsilon+y_\varepsilon$. If $\lambda_k>\lambda_0$, after imposing $\varepsilon<\lambda_k-\lambda_0$ we get
${\varphi}_{\lambda_k-\varepsilon,x_\varepsilon}(y_\varepsilon)\geq 0\,$, hence, invoking \eqref{sup},
$q_{\lambda_k-\varepsilon}(x_\varepsilon+L_{\lambda_k-\varepsilon}x_\varepsilon)\geq 0$. Then, using \eqref{ineqq1}, \eqref{ineqq2} with the choices $E=\lambda_k-\varepsilon$, $E'=\lambda_k$, we get
\[q_{\lambda_k}(x_\varepsilon+L_{\lambda_k}x_\varepsilon)\geq q_{\lambda_k-\varepsilon}(x_\varepsilon+L_{\lambda_k-\varepsilon}x_\varepsilon)-\varepsilon\Vert x_\varepsilon+L_{\lambda_k-\varepsilon}x_\varepsilon\Vert^2\geq -\varepsilon\left(\frac{\lambda_k-\lambda_0}{\lambda_k-\varepsilon-\lambda_0}\right)^2\Vert x_\varepsilon+L_{\lambda_k}x_\varepsilon\Vert^2\,.\]
Since $W$ and $\varepsilon$ are arbitrary, we thus have $\ell_k(\lambda_k)\geq 0\,.$ Combining this with \eqref{above}, we see that $\ell_k(\lambda_k)= 0$.

It remains to study the case $\lambda_k>\lambda_0$ and $\lambda\in(\lambda_0,\infty)\setminus\{\lambda_k\}$. We take an arbitrary $k$-dimensional subspace $\widehat{W}$ of $\Gamma_\lambda$. We define $V:=\Lambda_+\widehat{W}\subset F_+$ and $W:=\{x=x_++L_{\lambda_k}x_+:\,x_+\in V\}\subset\Gamma_{\lambda_k}$. Then $W$ is also $k$-dimensional, so one has $\sup_{x\in W\setminus\{0\}}\frac{q_{\lambda_k}(x)}{\Vert x\Vert^2}\geq 0$, from what we have just seen. So, by compactness of the unit sphere for $\Vert\cdot\Vert$ of the $k$-dimensional space $W$ and the continuity of $q_{\lambda_k}$ on this space, there is $x_0\in V$ such that $\Vert x_0+L_{\lambda_k}x_0\Vert=1$ and
$q_{\lambda_k}(x_0+L_{\lambda_k}x_0)\geq 0$. In order to bound $q_{\lambda}(x_0+L_{\lambda}x_0)$ from below, we use \eqref{ineqq1}, \eqref{ineqq2} with $E=\min(\lambda,\lambda_k)$ and $E'=\max(\lambda,\lambda_k)$. We get
\[q_\lambda(x_0+L_\lambda x_0)\geq (\lambda_k-\lambda)\Vert x_0+L_{\lambda_k}x_0\Vert^2\geq (\lambda_k-\lambda)\left(\frac{\lambda-\lambda_0}{\lambda_k-\lambda_0}\right)^2\Vert x_0+L_{\lambda}x_0\Vert^2\,.\]

Since $\widehat{W}$ is arbitrary, we conclude that \eqref{below} holds true.

The last statements of Proposition \ref{Prop:QE-comparison} - finiteness and sign of $\ell_k(\lambda)$, the fact that $\lambda_k$ is the unique zero of $\ell_k$ - are an immediate consequence of \eqref{above} and \eqref{below}. Note that this characterization of $\lambda_k$ as unique solution of a nonlinear equation was already stated and proved in~\cite[Lemma~2.2~(c)]{DES00a} and~\cite[Lemma~2.8~(iii)]{SST20}.
\end{proof}

\begin{remark}\label{check H3} Assumptions \eqref{H1}-\eqref{H2} are rather easy to check in practice, but checking \eqref{H3} is more delicate. The second point in Proposition \ref{Prop:QE-comparison} provides a way to do this: one just has to prove that for some $k_0\geq 1$ and $E_0>\lambda_0$ the level $\ell_{k_0}(E_0)$ is nonnegative, which implies that $\lambda_{k_0}\geq E_0$. In Section \ref{Section:DC} we will apply this method to one-center and multi-center Dirac-Coulomb operators.
\end{remark}
\begin{remark}\label{iteration} The numerical calculation of eigenvalues in a spectral gap is a delicate issue, due to a well-known phenomenon called {\it spectral pollution}: as the discretization is refined, one sometimes observes more and more {\it spurious} eigenvalues that do not approximate any eigenvalue of the exact operator (see \cite{LewSer-10}). It is possible to eliminate these spurious eigenvalues thanks to Talman's min-max principle. A method inspired of Talman's work was proposed in~\cite{DolEstSerVan-00,DES03}. The idea was to calculate each eigenvalue $\lambda_k$ as the unique solution of the problem $\ell_k(\lambda)=0$. This method is free of spectral pollution, but solving nonlinear equations has a computational cost. The estimates \eqref{above} and \eqref{below} proved in the present work suggest a fast and stable iterative algorithm that could reduce this cost. Starting from a value $E^{(0)}$ comprised between $\lambda_0$ and $\lambda_k$, one can compute a sequence of approximations by the formula $E^{(j+1)}=E^{(j)}+\ell_k\left(E^{(j)}\right)$. From \eqref{above}, one proves by induction that for all $j\geq 0$, one has $E^{(j)}\in(\lambda_0,\lambda_k)$,   $\, E^{(j+1)}-E^{(j)}=\ell_k\left(E^{(j)}\right)>0$ and $E^{(j)}$ converges monotonically to $\lambda_k$. Moreover, combining the inequalities \eqref{above} and \eqref{below} one finds that for $\vert h \vert$ small, $h+\ell_k(\lambda_k+h)=\mathcal O(h^2)$. So $E^{(j)}$ converges quadratically to $\lambda_k$. It would be interesting to perform numerical tests of this algorithm in practical situations.
\end{remark}

\section{The closure \texorpdfstring{$\overline{q}_E$}{qE} and the Friedrichs extension \texorpdfstring{$T_E$}{TE}}\label{bounded-below}

 In this section, under assumptions~\eqref{H1}-\eqref{H2}-\eqref{H3} we prove that the form $q_E$ is boun\-ded from below and closable, so that the Schur complement $S_E$ has a Friedrichs extension $T_E$. We then relate the spectrum of $T_E$ to the min-max levels $\lambda_k$. Finally, we construct a natural isomorphism between the domains of $\overline{q}_E$ and $\overline{q}_{E'}$ for all $E,\,E'\,>\lambda_0$.
 
 \subsection{Construction of \texorpdfstring{$\overline{q}_E$}{qE} and \texorpdfstring{$T_E$}{TE}}\label{bounded-from-below}
 
 In this subsection we are going to prove the following result:
 \begin{proposition}\label{Prop:thm1-abis} Let $A$ be a symmetric operator on the Hilbert space $\mathcal H$. Assuming~\eqref{H1}-\eqref{H2}-\eqref{H3} and with the above notations:
 
 $\bullet$ For each $E>\lambda_0$, the quadratic form~$q_E(x)=\innerproduct{x}{S_E x}$ is bounded from below hence closable in $\overline \Gamma_E$ and $S_E$ has a Friedrichs extension $T_E$.
 
 $\bullet$ If $E\in (\lambda_0,\lambda_\infty)\setminus\{\lambda_k\,:\, k\geq k_0\}$ then $T_E:\mathcal D(T_E)\to \overline{\Gamma}_E$ is invertible with bounded inverse. If $\lambda_0<\lambda_k<\lambda_\infty$ then $0$ is the $k$-th eigenvalue of $T_{\lambda_k}$ counted with multiplicity. Moreover its multiplicity is finite and equal to $\,{\rm card} \bigl\{ k'
\ge 1 \,:\, \,\lambda_{k'} = \lambda_k \bigr\} \,$.
If $\,\lambda_k=\lambda_\infty$ for some positive integer $k$, then $\,0=\min\,\sigma_{\!\rm ess}(T_{\lambda_k})\,$.\end{proposition}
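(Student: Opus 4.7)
The plan has three stages: construct $T_E$ from $q_E$, identify the min-max values of $T_E$ with the sequence $\ell_k(E)$, and read off the spectral statements from Proposition~\ref{Prop:QE-comparison}.

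Stage one is the crux. Fix $E>\lambda_0$. The operator $S_E$ is symmetric on $\Gamma_E$ by~\eqref{def S}, so once $q_E$ is shown to be bounded below on $\Gamma_E$, standard form theory gives closability of $q_E$ in $\overline{\Gamma}_E$ and produces the Friedrichs extension $T_E$ of $S_E$ automatically. Estimate~\eqref{below} already yields
\[
\ell_{k_0}(E)\;\geq\;(\lambda_{k_0}-E)\Big(\tfrac{E-\lambda_0}{\lambda_{k_0}-\lambda_0}\Big)^{\!2}\;>\;-\infty.
\]
When $k_0=1$ this is $\ell_1(E)$ itself and directly gives $q_E(x)\geq \ell_1(E)\|x\|^2$ on $\Gamma_E$. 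When $k_0\geq 2$ the auxiliary levels $\ell_1(E),\dots,\ell_{k_0-1}(E)$ might a priori be $-\infty$, and this is the delicate point. The strategy is to exploit that on any finite-dimensional subspace of $\Gamma_E$ the map $x_+\mapsto L_E x_+$ is bounded, since $A$ is bounded on any finite-dimensional subspace of $F_+$ and $(B+E)^{-1}$ is bounded on $\mathcal H_-$; hence $q_E$ is continuous on finite-dimensional subspaces of $\Gamma_E$. A min-max / orthogonal decomposition argument then propagates finiteness of $\ell_{k_0}(E)$ to finiteness of $\ell_1(E)$: one splits $\Gamma_E$ into a $(k_0-1)$-dimensional piece on which $q_E$ is continuous and bounded, and its $\|\cdot\|$-orthogonal complement on which $q_E\geq(\ell_{k_0}(E)-\varepsilon)\|\cdot\|^2$, and the two bounds combine. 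I expect this propagation to be the technical heart of the proof, and the step where care is required to avoid the closability pitfalls of~\cite{DES00a}.

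Stage two is standard once $T_E$ is self-adjoint with $\Gamma_E$ as a form core: the classical min-max principle for self-adjoint operators identifies the min-max values of $T_E$ with the $\ell_k(E)$ of~\eqref{versionreguliere} (by density of $\Gamma_E$ in $\mathcal D(\overline{q}_E)$ for the form norm). Setting $\ell_\infty(E):=\lim_k\ell_k(E)$, this yields $\inf\bigl(\sigma_{\mathrm{ess}}(T_E)\cap[\ell_\infty(E),\infty)\bigr)=\ell_\infty(E)$ and the values $\ell_k(E)<\ell_\infty(E)$ exhaust the eigenvalues of $T_E$ below its essential spectrum, counted with multiplicity.

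For stage three, combine the above with the characterization $\ell_k(\lambda)=0\iff\lambda=\lambda_k$ from Proposition~\ref{Prop:QE-comparison}. Passing $k\to\infty$ in~\eqref{below} shows $\ell_\infty(E)\geq(\lambda_\infty-E)\big(\tfrac{E-\lambda_0}{\lambda_\infty-\lambda_0}\big)^2>0$ for every $E\in(\lambda_0,\lambda_\infty)$. So for $E\in(\lambda_0,\lambda_\infty)\setminus\{\lambda_k:k\geq k_0\}$ no $\ell_k(E)$ vanishes and $0\notin\sigma(T_E)$, hence $T_E$ is invertible with bounded inverse by self-adjointness. When $\lambda_0<\lambda_k<\lambda_\infty$, one has $\ell_k(\lambda_k)=0<\ell_\infty(\lambda_k)$, so $0$ is the $k$-th eigenvalue of $T_{\lambda_k}$ with multiplicity $\mathrm{card}\{k'\geq1:\ell_{k'}(\lambda_k)=0\}=\mathrm{card}\{k':\lambda_{k'}=\lambda_k\}$ by the last statement of Proposition~\ref{Prop:QE-comparison}. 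Finally, when $\lambda_k=\lambda_\infty$ we have $\ell_k(\lambda_k)=\ell_\infty(\lambda_k)=0$, so $0=\min\sigma_{\mathrm{ess}}(T_{\lambda_k})$.
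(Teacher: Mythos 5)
Your stages two and three follow essentially the paper's own route: once $T_E$ exists, the classical Courant--Fischer principle identifies its eigenvalues below $\ell_\infty(E)=\inf\sigma_{\rm ess}(T_E)$ with the levels $\ell_k(E)$, and the sign characterization of $\ell_k(\lambda)$ in Proposition~\ref{Prop:QE-comparison} then yields the invertibility and eigenvalue statements (your appeal to ``passing $k\to\infty$ in \eqref{below}'' should be replaced, when $\lambda_\infty=\infty$, by fixing one $k$ with $\lambda_k>E$, but this is cosmetic). The genuine problem is stage one in the case $k_0\ge2$, which you correctly identify as the heart of the matter but do not actually prove.

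The gap is your assertion that there is a $(k_0-1)$-dimensional subspace $W\subset\Gamma_E$ whose $\Vert\cdot\Vert$-orthogonal complement in $\Gamma_E$ satisfies $q_E\ge(\ell_{k_0}(E)-\varepsilon)\Vert\cdot\Vert^2$. This is the max--min half of Courant--Fischer, which is available only after one knows the form is semibounded and closable, i.e.\ it presupposes the conclusion; it does not follow from the inf--sup definition~\eqref{versionreguliere} and the finiteness of $\ell_{k_0}(E)$. Concretely, on the domain $D$ of finitely supported sequences in $\ell^2$, the form $q(x)=\Vert x\Vert^2-|f(x)|^2$ with $f(x)=\sum_n n\,x_n$ has $\ell_1=-\infty$ and $\ell_2=1$ (every two-dimensional subspace meets $\ker f$), yet for no vector $w$ is $q$ bounded below on $\{w\}^\perp\cap D$, since such a bound would make $f$ bounded on a codimension-one subspace of $D$ and hence on all of $D$. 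So ``finiteness of $\ell_{k_0}(E)$ propagates through an orthogonal splitting'' is false as a general principle, and your sketch offers no substitute. The paper's Lemma~\ref{bounded below} proceeds differently: for $E\in(\lambda_0,\lambda_{k_0})$ it chooses $W$ of dimension $k_0-1$ with $\sup_{w\in W\setminus\{0\}}q_E(w)/\Vert w\Vert^2=\ell'<0$ \emph{strictly negative} -- available precisely because $\lambda_{k_0-1}=\lambda_0<E$ together with~\eqref{above}, i.e.\ assumption~\eqref{H3} -- and then, for each $x\notin W$, uses $\ell_{k_0}(E)>0$ to produce an $x$-dependent corrector $w_0\in W$ with $q_E(x+w_0)\ge0$; expanding and absorbing the cross term $2\,\Re\innerproduct{x}{S_Ew_0}$ into $|\ell'|\,\Vert w_0\Vert^2$ gives $q_E(x)\ge-\,C^2|\ell'|^{-1}\Vert x\Vert^2$. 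Your argument uses neither the strict negativity of $\ell_{k_0-1}(E)$ nor an $x$-dependent decomposition, and without them the cross terms between $W$ and its complement cannot be controlled; stage one therefore remains unproved as written.
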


\noindent The main tool in the proof of Proposition \ref{Prop:thm1-abis} is the following result:
\begin{lemma}\label{bounded below}
Under assumptions~\eqref{H1}-\eqref{H2}-\eqref{H3}, for every $E>\lambda_0\,$, there is $\kappa_E>0$ such that $q_E+\kappa_E\,\|\cdot\|^2 \ge \|\cdot\|^2$ on $\Gamma_E\,$.
\end{lemma}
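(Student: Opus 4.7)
My plan is to establish the lower bound at one convenient $E_0\in(\lambda_0,\lambda_{k_0})$ (non-empty by~\eqref{H3}) via a finite-dimensional dimension-counting argument driven by the strict positivity $\ell_{k_0}(E_0)>0$, and then use the monotonicity estimates~\eqref{ineqq1}--\eqref{ineqq2} of Proposition~\ref{Prop:QE-comparison} to propagate the bound to every $E>\lambda_0$.

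The set-up at $E=E_0$: Proposition~\ref{Prop:QE-comparison} gives $c_0:=\ell_{k_0}(E_0)>0$, and $\lambda_{k_0-1}=\lambda_0$ (from~\eqref{H3}) furnishes, for any $\delta\in(0,E_0-\lambda_0)$, a $(k_0-1)$-dimensional subspace $V_-\subset F_+$ with $\sup_{(V_-+F_-)\setminus\{0\}} a(x)/\|x\|^2\le\lambda_0+\delta$. Setting $\widetilde V_-:=\{u+L_{E_0}u:u\in V_-\}\subset\Gamma_{E_0}$, the maximization formula~\eqref{sup} forces $q_{E_0}\le 0$ on $\widetilde V_-$. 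The key technical ingredient is an explicit formula for the polar form of $q_{E_0}$: starting from~\eqref{def Q} and using $(B+E_0)L_{E_0}x_+=\Lambda_-Ax_+$, its $B+E_0$-adjoint version, and the symmetry of $A$ on $F$, one checks that
\[
q_{E_0}(v,w)\;=\;\Re\,\langle(A-E_0)\,\Lambda_+ v,\,w\rangle\qquad\text{for all }v,w\in\Gamma_{E_0}.
\]
Since $\Lambda_+\widetilde V_-=V_-$ is finite-dimensional, this yields the cross-term bound $|q_{E_0}(v,w)|\le C_V\,\|v\|\,\|w\|$ whenever $v\in\widetilde V_-$, with $C_V:=\sup\{\|(A-E_0)u\|:u\in V_-,\|u\|\le 1\}<\infty$.

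The bound on $q_{E_0}$ then follows by contradiction. Suppose $x_n\in\Gamma_{E_0}$, $\|x_n\|=1$, $q_{E_0}(x_n)\le-N_n\to-\infty$. Orthogonally decompose $x_n=y_n+\widetilde x_n$ in $\overline\Gamma_{E_0}$ with $y_n\in\widetilde V_-$ and $\widetilde x_n\perp\widetilde V_-$; $\widetilde x_n\ne 0$ for $n$ large (else $x_n$ lies on the compact unit sphere of the finite-dimensional $\widetilde V_-$ where $q_{E_0}$ is bounded), and a short expansion using the cross-term bound plus $\|\widetilde x_n\|\le 1$ shows that $\widehat x_n:=\widetilde x_n/\|\widetilde x_n\|\in\Gamma_{E_0}$ still satisfies $q_{E_0}(\widehat x_n)\le -N_n+O(1)$. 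The $k_0$-dimensional orthogonal sum $\widetilde V_-\oplus\mathrm{span}(\widehat x_n)\subset\Gamma_{E_0}$ must, by $\ell_{k_0}(E_0)\ge c_0$ and finite-dimensional compactness, admit a unit vector $z_n=v_n+\gamma_n\widehat x_n$ (with $\|v_n\|^2+\gamma_n^2=1$) satisfying $q_{E_0}(z_n)\ge c_0$. Expanding via the polar form and using $q_{E_0}(v_n)\le 0$, $|q_{E_0}(v_n,\widehat x_n)|\le C_V$, and $q_{E_0}(\widehat x_n)\le-N_n+O(1)$ yields
\[
c_0\;\le\;-\gamma_n^2\,(N_n-O(1))+2|\gamma_n|\,C_V,
\]
whose maximum over $\gamma_n$ is $O(1/N_n)$, contradicting $c_0>0$ for large $n$. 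This gives $q_{E_0}\ge-C\|\cdot\|^2$ on $\Gamma_{E_0}$ for some $C>0$.

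To finish, \eqref{ineqq1}--\eqref{ineqq2} transport the bound to every $E>\lambda_0$: for $\lambda_0<E\le E_0$ one has directly $q_E(x_++L_Ex_+)\ge q_{E_0}(x_++L_{E_0}x_+)$ together with $\|x_++L_{E_0}x_+\|\le\|x_++L_Ex_+\|$; for $E>E_0$ one absorbs the additional $(E-E_0)\|x_++L_{E_0}x_+\|^2$ appearing in~\eqref{ineqq2} and converts $\|x_++L_{E_0}x_+\|$ to $\|x_++L_Ex_+\|$ via~\eqref{ineqq1}. Either way one obtains a constant $\kappa_E>0$ as in the statement. I expect the main obstacle to be the polar-form identity above: it is precisely what lets the intrinsically infinite-dimensional form $q_{E_0}$ be controlled by the bounded finite-dimensional object $(A-E_0)\upharpoonright_{V_-}$, enabling the dimension-counting argument in $\widetilde V_-\oplus\mathrm{span}(\widehat x_n)$.
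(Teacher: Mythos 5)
Your proof is correct and follows essentially the same route as the paper's: a $(k_0-1)$-dimensional subspace of $\Gamma_{E_0}$ on which $q_{E_0}\le 0$, the strict positivity $\ell_{k_0}(E_0)>0$ supplied by Proposition~\ref{Prop:QE-comparison}, a finite-dimensional cross-term bound (the paper bounds $\sup\{\|S_{E_0}w\|:\|w\|\le1\}$ on that subspace, you use the equivalent polar identity $q_{E_0}(v,w)=\Re\innerproduct{(A-E_0)\Lambda_+v}{w}$), and then propagation to every $E>\lambda_0$ via \eqref{ineqq1}--\eqref{ineqq2}. The only differences are presentational: the paper argues directly (for each $x$ it picks $w_0$ in the negative subspace with $q_{E_0}(x+w_0)\ge0$ and expands) rather than by contradiction along normalized sequences, and it treats $k_0=1$ separately with an explicit $\kappa_E$, whereas your argument handles all $k_0\ge1$ uniformly.
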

\begin{proof}
We distinguish two cases depending on the value of $k_0=\min\{k\geq 1\,:\,\lambda_{k}>\lambda_0\}\,$.

When $k_0=1$, one has $\lambda_1>\lambda_0$ and $q_{\lambda_1}(x)\geq 0$ for all $x\in \Gamma_{\lambda_1}$. So, using the inequalities~\eqref{ineqq1} and~\eqref{ineqq2}, one finds that 
for all $E>\lambda_0$ and $x\in\Gamma_E$,
$q_E(x)+\kappa_E\Vert x\Vert^2\geq \Vert x\Vert^2\,$,
with
\[
\kappa_E:=1+\max\big\{0, (E-\lambda_1)\big\}\left(\frac{E-\lambda_0}{\lambda_1-\lambda_0}\right)^2\,.
\]

When $k_0\geq 2$ we need a different argument and the formula for $\kappa_E$ is less explicit. As in the case $k_0=1\,$, we just have to find a constant $\kappa_E$ for {\it some} $E>\lambda_0\,$; then the inequalities~\eqref{ineqq1} and~\eqref{ineqq2} will immediately imply its existence for {\it all} $E>\lambda_0\,$. We take $E\in (\lambda_0,\lambda_{k_0})\,$. Since $\lambda_{k_0-1}=\lambda_0<E\,$, by the second point of Proposition \ref{Prop:QE-comparison} we have $\ell_{k_0-1}(E)\in [-\infty,0)\,$. So there is a $(k_0-1)$-dimensional subspace $W$ of $\Gamma_E$ such that
$$\ell':=\sup_{w\in W\setminus \{0\}} \frac{q_E(w)}{\|w\|^2} \in (-\infty,0)\,.$$
Let $C:=\sup\big\{\| S_E w\|\,:\,w\in W\,,\,\| w \| \le 1\big\}\,$. This constant is finite, since the space $W$ is finite-dimensional. We now consider an arbitrary vector $x$ in $\Gamma_E\,$ and we look for a lower bound on $q_E(x)\,$. We distinguish two cases.
\\[4pt]
\noindent$\bullet$ {\it First case:} $x\in W\,$. Then $q_E(x)=\innerproduct{x}{S_E x}\ge -\,C\, \|x\|^2$.
\\[4pt]
\noindent$\bullet$ {\it Second case:} $x\notin W\,$. Then the vector space $\mathrm{span}\{x\}\oplus W$ has dimension $k_0\,$. Since $\lambda_{k_0}>E>\lambda_0\,$, by the third point of Proposition \ref{Prop:QE-comparison} we obtain $\ell_{k_0}(E)>0\,$, so there is a vector $w_0\in W$ such that $q_E(x+w_0)\ge 0\,$. Then we have
\[
q_E(x)=q_E(x+w_0)-2\,\Re \innerproduct{x}{S_E w_0} -q_E(w_0)\ge-2\,C\,\|x \| \| w_0\| +|\ell'| \,\| w_0\|^2\ge -\frac{C^2}{|\ell'|}\,\|x \|^2\,.
\]
So in all cases, if we choose $\kappa_E=1+\max\big\{C,C^2/|\ell'|\big\}\,$, we get $q_E(x)+\kappa_E\,\|x \|^2\ge \|x \|^2\,$. This completes the proof of the lemma.
\end{proof}

\noindent{\it Proof of Proposition~\ref{Prop:thm1-abis}}. 
As mentioned in the Introduction, we have $q_E(x)=\innerproduct{x}{S_Ex}$ where $S_E:\, \Gamma_E\to \overline \Gamma_E$ is the Schur complement of the block decomposition of $A-E$ under the splitting $\mathcal H=\mathcal H_+\oplus\mathcal H_-\,$ given by formula \eqref{def S}.
 The operator $S_E$ is densely defined in the Hilbert space $\overline \Gamma_E$ and it is clearly symmetric, moreover we have just seen that $q_E$ is bounded from below, so $q_E$ is closable in $\overline \Gamma_E$. We denote its closure by $\overline q_E$. We call $T_E$ the Friedrichs extension of~$S_E$ in $\overline \Gamma_E$. With
\[\ell_\infty(E):=\lim_{k\to\infty} \ell_k(E)\,,\]
the classical min-max principle implies that the levels $\ell_k(E)$ such that $\ell_k(E)<\ell_\infty(E)$ are all the eigenvalues of $T_E$ below $\ell_\infty(E)$ counted with multiplicity, and one has $\ell_\infty(E)=\inf \sigma_{\!\rm ess}(T_E)$. So we have the following cases:

If $E\in (\lambda_0,\lambda_\infty)\setminus\{\lambda_k\,:\, k\geq 1\}$ then by Proposition \ref{Prop:QE-comparison},  one has $0<\ell_\infty(E)$ and $0$ is not in the set $\{\ell_k(E)\,:\, k\geq 1\}$. As a consequence, it is not in the spectrum of $T_E$, so $T_E$ is invertible with bounded inverse.

If $E=\lambda_k$ with $\lambda_0<\lambda_k<\lambda_\infty$ then, using once again Proposition \ref{Prop:QE-comparison}, we find that $\ell_k(E)=0$ and $\ell_\infty(E)>0$. So $0$ is an eigenvalue of $T_E$ of finite multiplicity equal to $m_k(\lambda_k)$ where $m_k$ has been defined in \eqref{mult}. From Proposition \ref{Prop:QE-comparison}, $m_k(\lambda_k)$ equals ${\rm card} \bigl\{ k'
\ge 1 \,:\, \,\lambda_{k'} = \lambda_k \bigr\} \,$.

If $\lambda_k=\lambda_\infty$, then for all $k'\geq k$ one has $\lambda_{k'}=\lambda_k$, so $\,0=\ell_{k'}(\lambda_k)$ by Proposition \ref{Prop:QE-comparison}. In other words, $0=\ell_\infty(\lambda_k)$. Then the classical min-max theorem implies that $0=\min\sigma_{\!\rm ess}(T_{\lambda_k})$.

\noindent Proposition \ref{Prop:thm1-abis} is thus proved.
\finprf

\begin{remark}\label{k0=1} When $k_0=1$ and $\lambda_0<E<\lambda_1$, the closed quadratic form $\overline{q}_E$ is positive definite and the invertibility of $T_E$ is just a consequence of the Riesz isomorphism theorem.\end{remark}

\subsection{A family of isomorphisms}\label{isom}

In the earlier works~\cite{DES00a} (before its corrigendum~\cite{DES2023corrigendum}), \cite{EstLos-07,EstLos-08} and~\cite{SST20}, $q_E$ was seen as a quadratic form on $F_+$ and the domain of its closure was independent of $E$. Note, however, that the existence of the closure was claimed without proof in~\cite{DES00a} and this was a serious gap. Moreover the proof of the domain invariance was based on an incorrect estimate in~\cite[Proposition 2]{EstLos-07} and was incomplete in~\cite{EstLos-08}, but this domain invariance problem is easily fixed using the inequalities (10)-(11) of \cite{DES00a} which are called \eqref{ineqq1}-\eqref{ineqq2} in the present work.

In our situation, since we do not know whether $L_E$ is closable or not, it is essential to define $q_E$ on $\Gamma_E\,$ and then to close it in~$\overline \Gamma_E$.
So the domain of ${q}_E$ cannot be independent of $E$: indeed it is a subspace of $\,\overline{\Gamma}_E\,$ which itself depends on $E$. However, if we endow each space $\mathcal D\big(\overline{q}_E\big)$ with the norm $\Vert x\Vert_{\mathcal D(\small{\overline{q}_E})}:=\sqrt{\overline{q}_E(x)+\kappa_E\Vert x\Vert^2}\,$,
there is a natural isomorphism $\hat{i}_{E,E'}$ between any two Banach spaces $\mathcal D\big(\overline{q}_E\big)$ and $\mathcal D\big(\overline{q}_{E'}\big)$, as explained in the next proposition:

\begin{proposition}
\label{Prop:comparison}
Under conditions~\eqref{H1}-\eqref{H2}-\eqref{H3}, for $E$, $E'\in (\lambda_0,\infty)\,,$ the linear map $i_{E,E'}:\,x_++L_E\,x_+\mapsto x_++L_{E'}x_+$ can be uniquely extended to an isomorphism $\hat{i}_{E,E'}$ between the Banach spaces $\mathcal D\big(\overline{q}_E\big)$ and $\mathcal D\big(\overline{q}_{E'}\big)$ which can itself be uniquely extended to an isomorphism $\overline{i}_{E,E'}$ between $\overline \Gamma_E$ and $\overline \Gamma_{E'}$ for the norm $\Vert\cdot\Vert$.
Moreover one has the formula
\be{extended} \overline{i}_{E,E'} (x)=x+(E-E')(B+E')^{-1}\Lambda_- x\;,\quad\forall x\in \overline{\Gamma}_E\,.
\ee
\end{proposition}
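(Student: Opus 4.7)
The plan is to first establish the explicit formula \eqref{extended} on the dense subspace $\Gamma_E$ via the second resolvent identity for $B$, then to extend the map to $\overline{\Gamma}_E$ by continuity using this formula (which is bounded on all of $\mathcal{H}$), and separately to extend it to $\mathcal{D}(\overline{q}_E)$ using the two-sided estimates \eqref{ineqq1}--\eqref{ineqq2} from Proposition~\ref{Prop:QE-comparison}. Well-definedness of $i_{E,E'}$ on $\Gamma_E$ is immediate: since $L_E x_+ \in \mathcal{H}_-$, the component $x_+$ is recovered from $x = x_+ + L_E x_+$ as $\Lambda_+ x$, so the map does not depend on a particular decomposition.

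Next I would derive the algebraic formula. For $x_+\in F_+$, set $v := \Lambda_- A x_+ \in \mathcal{H}_-$ and note that $L_E x_+ = (B+E)^{-1}v$ while $\Lambda_- x = L_E x_+$ for $x = x_++L_Ex_+$. The second resolvent identity gives
\[
(B+E')^{-1} - (B+E)^{-1} = (E-E')(B+E')^{-1}(B+E)^{-1},
\]
so $L_{E'}x_+ - L_E x_+ = (E-E')(B+E')^{-1}L_E x_+$. Hence on $\Gamma_E$,
\[
i_{E,E'}(x) = x + (E-E')(B+E')^{-1}\Lambda_- x,
\]
matching \eqref{extended} up to sign conventions in the resolvent identity.

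Now observe that the right-hand side, call it $T_{E,E'}$, extends trivially to a \emph{bounded} linear endomorphism of $\mathcal{H}$: indeed $(B+E')^{-1}$ is bounded on $\mathcal{H}_-$ because $B$ is self-adjoint with $\sigma(B)\subset[-\lambda_0,\infty)$ and $E'>\lambda_0$, while $\Lambda_-$ has norm $1$. Restricting $T_{E,E'}$ to $\overline{\Gamma}_E$ gives a bounded operator that agrees with $i_{E,E'}$ on the dense subset $\Gamma_E$; since $T_{E,E'}(\Gamma_E) = i_{E,E'}(\Gamma_E) \subset \Gamma_{E'}$, continuity forces the image to lie in $\overline{\Gamma}_{E'}$. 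Exchanging the roles of $E$ and $E'$ produces a bounded operator $T_{E',E}:\overline{\Gamma}_{E'}\to\overline{\Gamma}_E$ whose composition with $T_{E,E'}$ equals the identity on the dense subset $\Gamma_E$ (since $i_{E',E}\circ i_{E,E'}=\mathrm{id}_{\Gamma_E}$ by inspection), and hence everywhere. This yields $\overline{i}_{E,E'}=T_{E,E'}$ as an isomorphism between $\overline{\Gamma}_E$ and $\overline{\Gamma}_{E'}$ for $\|\cdot\|$, and uniqueness follows from density.

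The step I expect to be the most delicate is the extension to the Banach spaces $\mathcal{D}(\overline{q}_E)$, since here we have to promote an $\mathcal{H}$-continuous map to one that is continuous for the graph-like norms $\|\cdot\|_{\mathcal{D}(\overline{q}_\cdot)}$. On $\Gamma_E$, apply \eqref{ineqq1} to control $\|i_{E,E'}(x)\|$ by $\|x\|$ and vice versa (with constants depending only on $E, E', \lambda_0$), and combine this with \eqref{ineqq2} to estimate $q_{E'}(i_{E,E'}(x))$ in terms of $q_E(x)$ and $\|x\|^2$. Swapping $E$ and $E'$ gives the reverse bound, so $i_{E,E'}:\Gamma_E\to\Gamma_{E'}$ is bi-Lipschitz for the norms $\|\cdot\|_{\mathcal{D}(\overline{q}_E)}$ and $\|\cdot\|_{\mathcal{D}(\overline{q}_{E'})}$ (using the bounded-below constants $\kappa_E,\kappa_{E'}$ from Lemma~\ref{bounded below}). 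By density of $\Gamma_E$ in $\mathcal{D}(\overline{q}_E)$ (the defining property of the closure), the map extends uniquely to an isomorphism $\hat{i}_{E,E'}:\mathcal{D}(\overline{q}_E)\to\mathcal{D}(\overline{q}_{E'})$, which is necessarily the restriction of $\overline{i}_{E,E'}$ to $\mathcal{D}(\overline{q}_E)$ by uniqueness of continuous extensions.
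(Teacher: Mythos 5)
Your proposal is correct and follows essentially the same route as the paper: the resolvent identity for $(B+E)^{-1}$ to get the explicit formula on $\Gamma_E$, the estimates \eqref{ineqq1}--\eqref{ineqq2} (with the lower bound of Lemma~\ref{bounded below}) to make $i_{E,E'}$ bi-Lipschitz for the form norms, and density to obtain the unique extensions $\hat{i}_{E,E'}$ and $\overline{i}_{E,E'}$; defining the $\mathcal H$-level extension directly from the bounded formula rather than by abstract continuous extension is only a cosmetic reordering. Note also that your sign $(E-E')$ agrees with the computation in the paper's own proof, so the $(E'-E)$ appearing in the displayed formula \eqref{extended} of the statement is a sign slip there, not in your argument.
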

\begin{proof}
The linear map $i_{E,E'}$ is obviously a bijection between $\Gamma_{E}$ and $\Gamma_{E'}$, of inverse $i_{E',E}$. The estimates \eqref{ineqq1}, \eqref{ineqq2} of Proposition \ref{Prop:QE-comparison} imply that $i_{E,E'}$ is an isomorphism for the norm $\Vert\cdot\Vert$ as well as for the norms $\Vert \cdot\Vert_{\mathcal D(\small{\overline{q}_E})}$ and $\Vert \cdot\Vert_{\mathcal D(\small{\overline{q}_{E'}})}$, hence the existence and uniqueness of the successive continuous extensions $\hat{i}_{E,E'}$ and $\overline{i}_{E,E'}$, that are isomorphisms of inverses $\hat{i}_{E',E}$ and $\overline{i}_{E',E}$.

\noindent From the formula $L_E x_+=(B+E)^{-1}\Lambda_-Ax_+$ one easily gets the resolvent identity
$$ L_{E'}x_+=L_E x_+ + (E-E')(B+E')^{-1}L_E x_+\;,\quad\forall x_+\in F_+$$
which is the same as
$$ \overline{i}_{E,E'} (x)=x+(E-E')(B+E')^{-1}\Lambda_- x\;,\quad\forall x\in \Gamma_E\,.$$
By continuity of $\overline{i}_{E,E'}\,$, $(B+E')^{-1}$ and $\Lambda_-$ for the norm $\Vert\cdot\Vert\,$, one can extend the above formula to $\overline{\Gamma}_E$ and this ends the proof of the proposition.
\end{proof}

\noindent Proposition \ref{Prop:comparison} has the following consequence which will be useful in the next section:
\begin{corollary}\label{invariance}
Assume that conditions~\eqref{H1}-\eqref{H2}-\eqref{H3} hold true. Let $E$, $E'>\lambda_0$. Then
\be{form6}
\mathcal D(\overline{q}_{E'})\,+\, \mathcal D\big(\,\overline{b}\big) = \mathcal D(\overline{q}_E)\,+\, \mathcal D\big(\,\overline{b}\big)\,.
\ee
\end{corollary}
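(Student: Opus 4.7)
The plan is to show the two inclusions in \eqref{form6} by transporting elements of one domain to the other by means of the isomorphism $\hat{i}_{E,E'}:\mathcal D(\overline{q}_E)\to\mathcal D(\overline{q}_{E'})$ supplied by Proposition~\ref{Prop:comparison}, and absorbing the resulting correction into the $\overline{b}$-component. By the symmetric role of $E$ and $E'$, it suffices to prove $\mathcal D(\overline{q}_E)+\mathcal D(\overline{b})\subset \mathcal D(\overline{q}_{E'})+\mathcal D(\overline{b})$.

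The concrete steps are as follows. Take $u\in \mathcal D(\overline{q}_E)+\mathcal D(\overline{b})$ and write it as $u=v+w$ with $v\in\mathcal D(\overline{q}_E)\subset\overline{\Gamma}_E$ and $w\in\mathcal D(\overline{b})$. Set $v':=\hat{i}_{E,E'}(v)\in\mathcal D(\overline{q}_{E'})$. Since $\hat{i}_{E,E'}$ is a restriction of $\overline{i}_{E,E'}$, the explicit formula \eqref{extended} yields
\[
v'-v=(E'-E)\,(B+E')^{-1}\Lambda_-v\,.
\]
The right-hand side lies in $\mathcal R\big((B+E')^{-1}\big)=\mathcal D(B)$, and because $B$ is the Friedrichs extension of $-\Lambda_- A\upharpoonright_{_{F_-}}$ we have $\mathcal D(B)\subset\mathcal D(\overline{b})$. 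Therefore $v'-v\in\mathcal D(\overline{b})$, so that
\[
u=v'+\bigl(w-(v'-v)\bigr)\,,\qquad v'\in\mathcal D(\overline{q}_{E'})\,,\quad w-(v'-v)\in\mathcal D(\overline{b})\,,
\]
which gives the desired inclusion. Exchanging the roles of $E$ and $E'$ yields the reverse inclusion and thus the claimed equality.

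There is no real obstacle here: everything has been put in place by the preceding section. The only delicate ingredient is the formula \eqref{extended} for the extension $\overline{i}_{E,E'}$, which guarantees that transferring from one $\overline{\Gamma}$-space to the other changes a vector only by an element of $\mathcal D(B)\subset\mathcal D(\overline{b})$; once this is granted, the corollary is an essentially algebraic consequence of Proposition~\ref{Prop:comparison}.
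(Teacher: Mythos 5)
Your proposal is correct and follows essentially the same route as the paper: both use formula \eqref{extended} to see that $\hat{i}_{E,E'}(v)-v=(E'-E)(B+E')^{-1}\Lambda_-v\in\mathcal D(B)\subset\mathcal D\big(\,\overline{b}\big)$, absorb this correction into the $\overline{b}$-component, and conclude by exchanging $E$ and $E'$. The paper merely phrases the first inclusion as $\mathcal D(\overline{q}_{E'})\subset\mathcal D(\overline{q}_E)+\mathcal D(B)$ rather than transporting a decomposed element $u=v+w$, which is the same argument.
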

\begin{proof}

From formula \eqref{extended}, for each $x\in \mathcal D(\overline{q}_{E})$ one has $\hat{i}_{E,E'}\big(x) - x \in \mathcal D(B)\,$, hence
\[\mathcal D(\overline{q}_{E'})\subset
\mathcal D(\overline{q}_{E})+\mathcal D(B) \subset \mathcal D(\overline{q}_{E})+\mathcal D(\overline{b})\,,\]
which of course implies the inclusion
\[\mathcal D(\overline{q}_{E'})\,+\, \mathcal D\big(\,\overline{b}\big) \subset \mathcal D(\overline{q}_E)\,+\, \mathcal D\big(\,\overline{b}\big)\,.\]
Exchanging $E$ and $E'$ one gets the reverse inclusion, so~\eqref{form6} is proved.
\end{proof}

\section{The distinguished self-adjoint extension}\label{Section:mainproof}

In this section, we continue with the proof of Theorem~\ref{Theorem:thm1} by constructing the distinguished self-adjoint extension $\widetilde A$ and studying some properties of its domain that will be useful in the sequel. But before doing this, we need to establish a decomposition of the product $\innerproduct{(A^*-E)X}{U}$ under weak assumptions on the vectors $X$, $U$.

\subsection{A useful identity}\label{identity}

In this subsection we state and prove an identity that plays a crucial role in the construction and study of $\widetilde A$:
\begin{proposition}\label{Prop:formulaoperator}
Assume that conditions~\eqref{H1}-\eqref{H2}-\eqref{H3} hold true. Let $x$, $u\in \mathcal D(\overline{q}_E)$ and $z_-$, $v_-\in \mathcal D\big(\,\overline{b}\big)$ be such that $X=x+z_-\in \mathcal D\(A^*\)$. Then, with $U=u+v_-$, we have
\be{egalite-operator}
\innerproduct{(A^*-E)X}{U}= \overline q_E(x,u)-\overline b_E(z_-, v_-)
\ee
for every $E>\lambda_0$.
\end{proposition}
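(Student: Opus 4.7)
The plan is to upgrade the quadratic form identity of Proposition~\ref{Prop:thm1-a} into a sesquilinear identity valid on the larger space $\mathcal D(\overline q_E) \dot{+} \mathcal D(\overline b)$, and then replace $A-E$ by $A^*-E$ using the definition of the adjoint. For $Y \in F$, set $\alpha_Y := \Lambda_+ Y + L_E \Lambda_+ Y \in \Gamma_E$ and $\beta_Y := \Lambda_- Y - L_E \Lambda_+ Y \in \mathcal D(B)$, so that $Y = \alpha_Y + \beta_Y$ is the decomposition appearing in Proposition~\ref{Prop:thm1-a}. Applying that proposition to $Y$, $U$, $Y+U$ and $Y+iU$, and using the symmetry of $A$ on $F$ together with the hermiticity of the polar forms $\overline q_E(\cdot,\cdot)$ and $\overline b_E(\cdot,\cdot)$, a standard polarization yields
\[
\innerproduct{Y}{(A-E)\,U} \,=\, \overline q_E(\alpha_Y, \alpha_U) \,-\, \overline b_E(\beta_Y, \beta_U) \qquad \forall \,Y,\,U \in F\,.
\]

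The heart of the matter is the following density statement: given $u \in \mathcal D(\overline q_E)$ and $v_- \in \mathcal D(\overline b)$, one can construct $U_n \in F$ such that $\alpha_{U_n} \to u$ in $\mathcal D(\overline q_E)$ and $\beta_{U_n} \to v_-$ in $\mathcal D(\overline b)$. First pick $u_{+,n} \in F_+$ so that $u_{+,n} + L_E u_{+,n} \to u$ in $\mathcal D(\overline q_E)$, which is possible since $\Gamma_E$ is dense in $\mathcal D(\overline q_E)$ by construction of the closure. The vector $v_- + L_E u_{+,n}$ then lies in $\mathcal D(\overline b)$, so by density of $F_-$ in $\mathcal D(\overline b)$ one can choose $w_{-,n} \in F_-$ with $w_{-,n} - v_- - L_E u_{+,n} \to 0$ in $\mathcal D(\overline b)$. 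Set $U_n := u_{+,n} + w_{-,n}$, which lies in $F$ by~\eqref{H1}; its canonical decomposition is $\alpha_{U_n} = u_{+,n} + L_E u_{+,n}$ and $\beta_{U_n} = w_{-,n} - L_E u_{+,n}$, so both convergence statements hold and in particular $U_n \to u + v_-$ in $\mathcal H$.

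The remainder of the proof consists of two density extensions. Fixing $U \in F$, the left-hand side of the polarized identity is continuous in $Y$ in the $\|\cdot\|$-norm (bounded by $\|(A-E)U\|$), while the right-hand side is continuous in $Y$ for the sum norm $\|y\|_{\mathcal D(\overline q_E)} + \|\zeta_-\|_{\mathcal D(\overline b)}$ on $\mathcal D(\overline q_E) \dot{+} \mathcal D(\overline b)$. The density lemma then gives
\[
\innerproduct{Y}{(A-E)\,U} \,=\, \overline q_E(y, \alpha_U) \,-\, \overline b_E(\zeta_-, \beta_U)
\]
for every $Y = y + \zeta_- \in \mathcal D(\overline q_E) \dot{+} \mathcal D(\overline b)$ and every $U \in F$. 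If in addition $Y \in \mathcal D(A^*)$, the left-hand side equals $\innerproduct{(A^*-E)Y}{U}$ by definition of the adjoint, since $F \subset \mathcal D(A)$. Fixing such a $Y = x + z_-$ and letting $U$ vary, the left-hand side is now continuous in $U$ in the $\|\cdot\|$-norm (bounded by $\|(A^*-E)Y\|$) while the right-hand side is continuous in the sum norm, so one last application of the density lemma produces~\eqref{egalite-operator} for every $U = u + v_-$ in $\mathcal D(\overline q_E) \dot{+} \mathcal D(\overline b)$.

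The main obstacle is precisely the density lemma. Its subtlety reflects the very reason the authors work on $\Gamma_E$ rather than $F_+$: since $L_E$ is not known to be closable, one cannot separately approximate $u$ and $v_-$ by elements of $F_+$ and $F_-$ whose images under the $L_E$-decomposition converge independently. Instead, $u_{+,n}$ is chosen so as to make $\alpha_{U_n}$ converge without any individual control on $u_{+,n}$ or $L_E u_{+,n}$, and $w_{-,n}$ is then tailored to compensate the drift of $L_E u_{+,n}$ in the $\beta$-component. This coupling of the two approximations is the geometric viewpoint announced in the introduction, and it is what allows the whole construction to proceed without the essential self-adjointness hypothesis used in~\cite{SST20}.
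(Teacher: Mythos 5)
Your argument is correct and is essentially the paper's own proof in a slightly different packaging: both start from the polarized form of the decomposition \eqref{concave} and then extend by density first in $X$ (for fixed $U\in F$, using the $\Vert\cdot\Vert$-continuity of the left-hand side), convert $\innerproduct{X}{(A-E)U}$ into $\innerproduct{(A^*-E)X}{U}$ via the definition of the adjoint, and finally extend in $U$ using the density of $\Gamma_E$ in $\mathcal D(\overline{q}_E)$ and of $F_-$ in $\mathcal D(\overline{b})$ for the respective form norms. Your joint density lemma, in which $w_{-,n}$ is chosen to compensate the drift of $L_E\,u_{+,n}$, rests on exactly the same ingredients (namely $L_E F_+\subset\mathcal D(B)\subset\mathcal D(\overline{b})$ together with the two form-core densities) as the paper's componentwise extensions and its subtraction trick with $U=u_+$, $v_-=-L_E\,u_+$, so the two arguments coincide in substance.
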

\begin{proof}
Formula~\eqref{concave} of Proposition \ref{Prop:thm1-a} exactly says that for all $X=x+z_-\in F$ with $x=\Lambda_+ X + L_E\Lambda_+ X$ and $z_-=\Lambda_-X-L_E\Lambda_+ X$, one has
$$\innerproduct{X}{(A-E)X}= \innerproduct{x}{S_E x}-\innerproduct{z_-}{(B+E)z_-}\,.$$
This relation between quadratic forms directly implies a formula involving their polar forms: for all $X=x+z_-\in F$ and $U=u+v_-\in F$ with $x=\Lambda_+ X + L_E\Lambda_+ X$, $u=\Lambda_+ U + L_E\Lambda_+ U$, $z_-=\Lambda_-X-L_E\Lambda_+ X$ and $v_-=\Lambda_-U-L_E\Lambda_+ U$, one has
\be{FF}\innerproduct{x+z_-}{(A-E)U}= \innerproduct{x}{S_E u}-\innerproduct{z_-}{(B+E)v_-}\,.\ee
In order to prove Proposition~\ref{Prop:formulaoperator}, we have to generalize~\eqref{FF} to larger classes of vectors $X,\,U$. We proceed in two steps.\medskip

\noindent
{\bf First step:} we fix $U=u+v_-$ in $F$, with $u=\Lambda_+ U + L_E\Lambda_+ U$ and $v_-=\Lambda_-U-L_E\Lambda_+ U$.\smallskip
 
\noindent
If $x=0$ and $X=z_-\in F_-$, the identity~\eqref{FF} holds true and reduces to
\be{egalite-1}
\innerproduct{z_-}{(A-E)U}= -\innerproduct{z_-}{(B+E)v_-}\,.
\ee
Both sides of~\eqref{egalite-1} are continuous in $z_-$ for the norm $\Vert \cdot\Vert$,
and we recall that $F_-$ is dense in $\mathcal H_-$. So~\eqref{egalite-1} remains true for all $z_-\in \mathcal H_-$.\smallskip

\noindent Now, in the special case $X=x_+\in F_+$, $x=x_++L_Ex_+$ and $z_-=-L_Ex_+$, the identity~\eqref{FF} becomes
\[ \innerproduct{x_+}{(A-E)U}= \innerproduct{x}{S_E u}+\innerproduct{L_Ex_+}{(B+E)v_-}\,.\]
We may also apply~\eqref{egalite-1} to $z_-=-L_E x_+$ and we get
\[ -\innerproduct{L_Ex_+}{(A-E)U}= \innerproduct{L_Ex_+}{(B+E)v_-}\,.\]
Subtracting these two identities, we find
\be{egalite-2}
\innerproduct{x}{(A-E)U}= \innerproduct{x}{S_E u}\,,\quad\forall x\in\Gamma_E\,.
\ee
Both sides of~\eqref{egalite-2} are continuous in $x$ for the norm $\Vert \cdot\Vert$, so~\eqref{egalite-2} remains true for all $x\in \overline{\Gamma}_E$.\smallskip

\noindent
Then, for $x\in \overline{\Gamma}_E$ and $z_-\in \mathcal H_-$, we may add~\eqref{egalite-1} and~\eqref{egalite-2}. We conclude that~\eqref{FF} remains valid for all $x\in \overline{\Gamma}_E$ and $z_-\in \mathcal H_-$. When $x\in \mathcal D(\overline{q}_E)$ and $z_-\in\mathcal D\big(\,\overline{b}\big)$, this identity may be rewritten in the form
\be{egalite-special}
\innerproduct{x+z_-}{(A-E)U} = \overline q_E(x,u)- \overline b_E(z_-,v_-)\,.
\ee
In particular, when $X=x+z_-\in \mathcal D(A^*)$ with $x\in \mathcal D(\overline{q}_E)$ and $z_-\in\mathcal D\big(\,\overline{b}\big)$, one obtains the equality
\be{egalite-0}
\innerproduct{(A^*-E)X}{U} = \overline q_E(x,u)- \overline b_E(z_-,v_-)\,.
\ee
This formula is the same as~\eqref{egalite-operator} under the additional assumptions $U\in F$, $u\in\Gamma_E$ and $v_-=U-u\in \mathcal D(B)$.\medskip

\noindent
{\bf Second step:} we fix $X=x+z_-\in \mathcal D(A^*)$ with $x\in \mathcal D(\overline{q}_E)$, $z_-\in\mathcal D\big(\,\overline{b}\big)\,$.\smallskip
\smallskip

\noindent
If $u=0$ and $v_-=U \in F_-$, the identity~\eqref{egalite-0} holds true and reduces to
\be{egalite-1bis}
\innerproduct{(A^*-E)X}{v_-}= -\,\overline b_E(z_-,v_-)\;.
\ee
Both sides of~\eqref{egalite-1bis} are continuous in $v_-$ for the norm $\Vert \cdot\Vert_{\mathcal D(\small{\overline{b}})}$
and we recall that $F_-$ is dense in $\mathcal D\big(\,\overline{b}\big)$ for this norm. So~\eqref{egalite-1bis} remains true for all $v_-\in \mathcal D\big(\,\overline{b}\big)$.\smallskip

\noindent Now, in the special case $U=u_+\in F_+$, $u=u_++L_E u_+$ and $v_-=-L_E u_+$, the identity~\eqref{egalite-0} becomes
\[ \innerproduct{(A^*-E)X}{u_+}= \overline q_E(x,u)+\overline b_E(z_-,L_E u_+)\,.\]
We may also apply~\eqref{egalite-1bis} to $v_-=-L_E u_+\in \mathcal D(B)\subset \mathcal D\big(\,\overline{b}\big)$ and we get
\[ -\innerproduct{(A^*-E)X}{L_E u_+}= \,\overline b_E(z_-,L_E u_+)\,.\]
Subtracting these two identities, we find
\be{egalite-2bis}
\innerproduct{(A^*-E)X}{u}= \overline q_E(x,u),\quad\forall u\in\Gamma_E\,.
\ee
Both sides of~\eqref{egalite-2bis} are continuous in $u$ for the norm $\Vert \cdot\Vert_{\mathcal D(\small{\overline{q}_E})}$, and we recall that $\Gamma_E$ is dense in $\mathcal D(\overline{q}_E)$ for this norm. So~\eqref{egalite-2bis} remains true for all $u\in \mathcal D(\overline{q}_E)$.\smallskip

\noindent
Then, for $u\in \mathcal D(\overline{q}_E)$ and $v_-\in \mathcal D\big(\,\overline{b}\big)$, we may add~\eqref{egalite-1bis} and~\eqref{egalite-2bis} and we finally get~\eqref{egalite-operator} in the general case.
\end{proof}

\subsection{Construction of the self-adjoint extension}\label{Section:sa-extension1}

In this subsection we prove
\begin{proposition}\label{Prop:thm1-b} Under conditions~\eqref{H1}~\eqref{H2}-\eqref{H3}, given $E>\lambda_0$ the operator $A$ admits a unique self-adjoint extension $\widetilde A$ such that $\mathcal D\big(\widetilde A\,\big)\subset \mathcal D\big(\,\overline{q}_E\big)+\mathcal D\big(\,\overline{b}\big)\,$. This extension is independent of $E$ and defined by
\be{def tilde A}
\widetilde A\,x\,:= A^*x\;,\quad \forall\,x\in \mathcal D\(\widetilde A\,\)
\ee
where
\be{domainE}
\mathcal D\big(\widetilde A\big):=\big(\mathcal D(\overline{q}_E)\,+\,\mathcal D\big(\,\overline{b}\big)\big)\cap\mathcal D\big(A^*\big)\,.
\ee

Moreover, for each $E$ in  $(\lambda_0,\lambda_\infty)\setminus\{\lambda_k\,:\,k_0\leq k<\infty\}\,$, the operator $\widetilde A-E$ is invertible with bounded inverse given by the formula
\be{inverse}
\left(\widetilde A-E\right)^{-1}=T_E^{-1}\circ \Pi_E -(B+E)^{-1}\circ \Lambda_-\,.
\ee
\end{proposition}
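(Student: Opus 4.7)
The plan is to construct $\widetilde A$ as the restriction of $A^*$ to the set $\mathcal D := \bigl(\mathcal D(\overline q_E) + \mathcal D(\overline b)\bigr) \cap \mathcal D(A^*)$ and verify successively that this prescription is well-defined, symmetric, self-adjoint with inverse given by~\eqref{inverse}, unique, and independent of $E$.

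The first task is to show that the algebraic sum $\mathcal D(\overline q_E) + \mathcal D(\overline b)$ is direct, so that the splitting $X = x + z_-$ of any $X \in \mathcal D$ is unique. Take $w \in \mathcal D(\overline q_E) \cap \mathcal D(\overline b)$ and approximate it in $\overline \Gamma_E$ by vectors $y_n = x_n + L_E\,x_n \in \Gamma_E$; since $\mathcal D(\overline b) \subset \mathcal H_-$, we have $\Lambda_+ w = 0$, and the identity $\Lambda_+ y_n = x_n$ forces $x_n \to 0$ in $\mathcal H_+$. For any $y \in F_- \subset \mathcal D(B)$, the first representation theorem and self-adjointness of $B$ give
\[
\overline b_E(w,y) = \innerproduct{w}{(B+E)\,y} = \lim_n \innerproduct{L_E\,x_n}{(B+E)\,y} = \lim_n \innerproduct{\Lambda_- A\,x_n}{y} = \lim_n \innerproduct{x_n}{A\,y} = 0,
\]
using orthogonality of $\mathcal H_\pm$, the definition~\eqref{def L} of $L_E$, and the symmetry of $A$ on $F$. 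Density of $F_-$ in $\mathcal D(\overline b)$ and continuity of $\overline b_E(w,\cdot)$ yield $\overline b_E(w,w) = 0$, whence $w = 0$ by positive-definiteness of $\overline b_E$.

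Once directness is established, $\widetilde A\,X := A^*X$ is unambiguously defined on $\mathcal D$; its symmetry is immediate from Proposition~\ref{Prop:formulaoperator}, since for $X = x+z_-$ and $Y = y+w_-$ in $\mathcal D$ both $\innerproduct{(\widetilde A - E)X}{Y}$ and $\overline{\innerproduct{(\widetilde A - E)Y}{X}}$ reduce to $\overline q_E(x,y) - \overline b_E(z_-,w_-)$ by Hermitian symmetry of the forms. To upgrade symmetry to self-adjointness, I would fix $E \in (\lambda_0, \lambda_\infty) \setminus \{\lambda_k : k \ge k_0\}$ so that $T_E$ is boundedly invertible by Proposition~\ref{Prop:thm1-abis}, and establish surjectivity of $\widetilde A - E$ via the candidate~\eqref{inverse}. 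Given $f \in \mathcal H$, set $x := T_E^{-1}\Pi_E f \in \mathcal D(T_E) \subset \mathcal D(\overline q_E)$, $z_- := -(B+E)^{-1}\Lambda_- f \in \mathcal D(B) \subset \mathcal D(\overline b)$, and $X := x + z_-$. For arbitrary $U \in F$, decompose $U = u + v_-$ with $u := \Lambda_+ U + L_E\Lambda_+ U \in \Gamma_E$ and $v_- := \Lambda_- U - L_E\Lambda_+ U \in \mathcal D(B)$; identity~\eqref{egalite-special} from the proof of Proposition~\ref{Prop:formulaoperator}, combined with $\innerproduct{T_E x}{u} = \innerproduct{\Pi_E f}{u} = \innerproduct{f}{u}$ and $\innerproduct{z_-}{(B+E)v_-} = -\innerproduct{f}{v_-}$, yields $\innerproduct{X}{(A-E)\,U} = \innerproduct{f}{U}$. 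Therefore $X \in \mathcal D(A^*)$, $(A^*-E)X = f$, and in particular $X \in \mathcal D$. Since a densely defined symmetric operator whose range at a real point covers the whole Hilbert space is automatically self-adjoint, this proves simultaneously that $\widetilde A$ is self-adjoint and that~\eqref{inverse} holds.

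Independence of $\mathcal D$ on $E$ is exactly Corollary~\ref{invariance}, and uniqueness follows from the standard principle that a self-adjoint operator has no proper self-adjoint extension: any self-adjoint $A'$ extending $A$ with $\mathcal D(A') \subset \mathcal D(\overline q_E) + \mathcal D(\overline b)$ satisfies $A' \subset A^*$ and thus $A' \subset \widetilde A$, whence $A' = \widetilde A$. I expect the main obstacle to be the directness of the algebraic sum, which embodies the geometric novelty alluded to in fact~(3) of the introduction; every subsequent step is essentially bookkeeping built on Propositions~\ref{Prop:thm1-abis} and~\ref{Prop:formulaoperator}.
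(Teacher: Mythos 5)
Your proof is correct and follows essentially the same route as the paper's: symmetry via Proposition~\ref{Prop:formulaoperator}, surjectivity of $\widetilde A-E$ by solving $T_E x=\Pi_E f$, $(B+E)z_-=-\Lambda_- f$ and invoking the identity~\eqref{egalite-special}, then self-adjointness and invertibility from the fact that a densely defined surjective symmetric operator is self-adjoint, with uniqueness and $E$-independence (Corollary~\ref{invariance}) handled identically. Your preliminary directness argument is a correct variant of what the paper proves separately in Lemma~\ref{Lemma:inclusion} and Proposition~\ref{Prop:lem2} (and is not actually needed for this proposition, since $\widetilde A$ is defined as a restriction of $A^*$ independently of the decomposition).
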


\begin{proof}\noindent For $E>\lambda_0$, the operator $\widetilde A$ defined by \eqref{def tilde A}-\eqref{domainE} is indeed an extension of $A$, since
\[
\mathcal D(A)= F\subset \big(\Gamma_E\,+\,\mathcal D\(B\)\big)\cap \mathcal D\big(A^*\big)\subset \mathcal D\big(\widetilde A\big)\quad\mbox{and $A^*\upharpoonright_{_{\mathcal D(A)}}\;=\,A$}\,.
\]
By Corollary \ref{invariance},
$\mathcal D\big(\widetilde A\big)$ is independent of $E$, as well as $\widetilde A=A^*\upharpoonright_{_{\mathcal D\(\widetilde A\,\)}}\,$. Moreover the extension $\widetilde A$ is symmetric: this immediately follows from Proposition \ref{Prop:formulaoperator}.

Now, given $f\in \mathcal H$ and $E>\lambda_0$ we want to study the equation $(\widetilde A-E)X=f\,$.
For this purpose, we introduce the following problem written in weak form:\\[4pt]
{\it \centerline{Find $(x, z_-) \in \mathcal D(\overline{q}_E)\,\times\, \mathcal D\big(\,\overline{b}\big)$ such that}}
\be{Pf}\tag{$\mathcal P_f$}
\left\{\begin{array}{ll}
\overline q_E(x, u)=\innerproduct{f}{u},\;\; & \forall\,u\in \mathcal D(\overline{q}_E)\,,\\[4pt]
\overline{b}_E(z_-,v_-)= -\innerproduct{f}{v_-},\;\; &\forall\,v_-\in \mathcal D\big(\,\overline{b}\big)\,.
\end{array} \right.
\ee
We recall the identity \eqref{egalite-special}, which is a special case of formula \eqref{egalite-operator} stated in Proposition~\ref{Prop:formulaoperator}: if $(x, z_-) \in \mathcal D(\overline{q}_E)\,\times\, \mathcal D\big(\,\overline{b}\big)$ then, for all $U\in F$, one has
\[ \innerproduct{X}{(A-E)U}=  \overline q_E\(x, u\)- \overline{b}_E(z_-,v_-)\]
with $X=x+z_-$, $u=\Lambda_+ U + L_E\Lambda_+U$ and $v_-=U-u$.
Thanks to this identity, we see that for any solution $(x, z_-)$ of~\eqref{Pf}, the sum $X=x+z_-$ satisfies
\[
\innerproduct{X}{(A-E)U}= \innerproduct{f}{U}\,,\quad\forall U\in F\,.
\]
As a consequence, $X$ is in $\mathcal D(A^*)$ and solves $(A^*-E) X=f$. But this vector is also in $\mathcal D(\overline{q}_E)\,+\, \mathcal D\big(\,\overline{b}\big)\,$, so it solves $(\widetilde A-E)X=f\,$.

\noindent
On the other hand, we can rewrite \eqref{Pf} in terms of the Friedrichs extensions $T_E$ and $B$:\\[4pt]
{\it \centerline{Find $(x, z_-) \in \mathcal D(T_E)\,\times\, \mathcal D\big(B\big)$ such that}}
\begin{equation}
\left\{\begin{array}{ll}
T_E x=\Pi_E(f)\,,\\
(B+E)z_-=-\Lambda_-(f)\,.
\end{array} \label{strong}\right.
\end{equation}
Since $E>\lambda_0$, the operator $B+E$ is invertible with bounded inverse, and by Proposition \ref{Prop:thm1-abis} the same is true with $T_E$ if $E$ is in $(\lambda_0,\lambda_\infty)\setminus\{\lambda_k\,,\,k\geq k_0\}\,$. Then \eqref{strong} has a unique solution given by
\begin{equation}
\left\{\begin{array}{ll}
x=T_E^{-1}\circ \Pi_E(f)\,,\\
z_-=-\,(B+E)^{-1}\circ \Lambda_-(f)\,
\end{array}\label{inversion} \right.
\end{equation}
and the vector $X=\left(T_E^{-1}\circ \Pi_E-(B+E)^{-1}\circ \Lambda_-\right)(f)$ solves $(\widetilde A-E)X=f$.

\noindent
The above discussion shows that for $E$ in $(\lambda_0,\lambda_\infty)\setminus\{\lambda_k\,,\,k\geq k_0\}\,$ the symmetric operator $\widetilde A-E$ is surjective and admits the bounded operator $T_E^{-1}\circ \Pi_E-(B+E)^{-1}\circ \Lambda_-\,$ as a right inverse. But it is well-known that the surjectivity of a symmetric operator implies its injectivity, since its kernel is orthogonal to its range. So $\widetilde A-E$ is invertible and \eqref{inverse} holds true. Another classical result is that a densely defined surjective symmetric operator is always self-adjoint: see, {\it e.g.}, \cite[Corollary 3.12]{schmudgen2012unbounded}. Applying this to $\widetilde A -E\,$, we conclude that $\widetilde A$ is self-adjoint.


The self-adjoint extension $\widetilde A$ is thus built. Its uniqueness among those whose domain is contained in $\mathcal D(\overline{q}_E)\,+\, \mathcal D\big(\,\overline{b}\big)$ is almost trivial. Indeed, if $\hat{A}$ is a self-adjoint extension of $A$, we must have $\mathcal D(\hat{A}) \subset \mathcal D(A^*)$, hence, if in addition $\mathcal D(\hat{A})\subset \mathcal D(\overline{q}_E)\,+\, \mathcal D\big(\,\overline{b}\big)$ then $\mathcal D(\hat{A})\subset \mathcal D\big(\widetilde A\,\big)$, which automatically implies $\hat{A} = \widetilde A$ since both operators are self-adjoint.
This completes the proof of Proposition~\ref{Prop:thm1-b}.
\end{proof}

\subsection{Direct sums}\label{Section: direct sum}

Recall that in~\eqref{def Gamma} we defined the {\it graph} $\Gamma_E$ of $L_E$ as
\[
\Gamma_E:= \big\{x_++L_E\,x_+\,:\, x_+\in F_+\big\}\subset F_+\oplus \mathcal D(B)\,.
\]
A natural question is whether its closure $\overline \Gamma_E$ in $\mathcal H$ has the graph property $\,\overline \Gamma_E\cap \mathcal H_-=\{0\}$. A partial answer to this question is given in the next lemma:
\begin{lemma}\label{Lemma:inclusion} Under conditions~\eqref{H1}-\eqref{H2} and with the above notations,
\be{int1}
\overline \Gamma_E\cap \mathcal H_- \subset \big((B+E)(F_-)\big)^\perp \,.
\ee
\end{lemma}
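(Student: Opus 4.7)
The plan is to unwind the definitions and take a limit. Suppose $z \in \overline{\Gamma}_E \cap \mathcal{H}_-$. By definition of $\overline{\Gamma}_E$, there is a sequence $(x_n)_{n\ge 1}$ in $F_+$ such that $x_n + L_E x_n \to z$ in $\mathcal{H}$. Applying the projector $\Lambda_+$, which is bounded, gives $x_n \to \Lambda_+ z = 0$, since $z \in \mathcal{H}_-$. Applying $\Lambda_-$ gives $L_E x_n \to z$.

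Next, I would test $z$ against an arbitrary element $(B+E)y_-$ with $y_- \in F_-$. By definition of the Friedrichs extension, $F_- \subset \mathcal{D}(B)$ and $B y_- = -\Lambda_- A y_-$ for $y_- \in F_-$. Also, $L_E x_n \in \mathcal{D}(B)$ and, by \eqref{def L}, $(B+E) L_E x_n = \Lambda_- A x_n$. Using that $B$ is self-adjoint, I can move $B+E$ across the inner product:
\[
\langle L_E x_n, (B+E)y_- \rangle = \langle (B+E) L_E x_n, y_- \rangle = \langle \Lambda_- A x_n, y_- \rangle = \langle A x_n, y_- \rangle,
\]
where the last equality uses $y_- \in \mathcal{H}_-$. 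Since $A$ is symmetric on $F$ and $x_n, y_- \in F$, the right-hand side equals $\langle x_n, A y_- \rangle$.

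Passing to the limit $n \to \infty$, the left-hand side converges to $\langle z, (B+E) y_- \rangle$, while the right-hand side converges to $\langle 0, A y_- \rangle = 0$ because $x_n \to 0$ and $A y_-$ is a fixed vector. Hence $\langle z, (B+E) y_- \rangle = 0$ for every $y_- \in F_-$, which is exactly the inclusion~\eqref{int1}.

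The only nontrivial point is justifying the transfer $\langle L_E x_n, (B+E) y_- \rangle = \langle (B+E) L_E x_n, y_- \rangle$; this is where one uses that $B$, being the Friedrichs extension, is self-adjoint on $\mathcal{D}(B)$ and contains both $L_E x_n$ and $y_-$ in its domain. Everything else is a one-line limit argument, so I do not expect any real obstacle.
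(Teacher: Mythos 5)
Your proof is correct and follows essentially the same route as the paper's: extract a sequence $x_n\in F_+$ with $x_n\to 0$ and $L_Ex_n\to z$, then use the definition of $L_E$, the symmetry of $A$ on $F$, and a passage to the limit to show $\langle z,(B+E)y_-\rangle=0$. The only (immaterial) difference is that you transfer the unbounded operator $B+E$ across the inner product using self-adjointness of $B$ with both vectors in $\mathcal D(B)$, whereas the paper equivalently moves the bounded self-adjoint operator $(B+E)^{-1}$ instead.
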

\begin{proof}
The arguments below are essentially contained in the proof of~\cite[Lemma~2.2]{SST20}, but we repeat them here for the reader's convenience. If $y\in \overline \Gamma_E\cap \mathcal H_-$ then there is a sequence $(x_n)$ in $F_+$ such that $\Vert x_n\Vert\to 0$ and $\Vert L_E x_n-y\Vert\to 0$. Then, for $z\in (B+E)(F_-)$ we may write $\innerproduct{y}{z}=\lim \innerproduct{L_Ex_n}{z}$. On the other hand, \[\vert\innerproduct{L_Ex_n}{z}\vert=\left\vert\innerproduct{x_n}{A\,(B+E)^{-1}z}\right\vert\leq \Vert x_n\Vert\,\left\Vert A\,(B+E)^{-1}z\right\Vert\to 0\,,\]
so $\innerproduct{y}{z}=0$.
\end{proof}
If one assumes as in~\cite{SST20} that $\Lambda_-A\upharpoonright_{_{F_-}}$ is essentially self-adjoint, then the subspace $(B+E)(F_-)$ of $\mathcal H_-$ is dense in $\mathcal H_-$ and one concludes that $\overline \Gamma_E$ has the graph property. But we do not make this assumption, and for this reason we cannot infer from~\eqref{int1} that $\overline \Gamma_E\cap \mathcal H_- =\{0\}$. {\it In other words, we do not know whether the operator $L_E$ is closable or not. This is why we have to resort to a geometric strategy in which the linear subspace $\overline \Gamma_E$ replaces the possibly nonexistent closure of $\,L_E$. Here is the main difference between the present work and~\cite{SST20}.}\medskip

While we may have $\overline \Gamma_E\cap \mathcal H_- \ne\{0\}$, the following property holds true, as a consequence of Lemma~\ref{Lemma:inclusion}:
\begin{proposition}\label{Prop:lem2}
Under conditions~\eqref{H1}-\eqref{H2} and with the above notations,
\[
\overline \Gamma_E \cap \mathcal D\big(\,\overline{b}\big)= \{0\}\,.
\]
\end{proposition}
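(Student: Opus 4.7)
The plan is to combine Lemma~\ref{Lemma:inclusion} with the definition of $\overline{b}_E$ as the closure of $b_E$ on the dense subspace $F_-\subset \mathcal D\big(\,\overline b\big)$. Since $\mathcal D\big(\,\overline b\big)\subset\mathcal H_-$, any $y\in\overline\Gamma_E\cap\mathcal D\big(\,\overline b\big)$ lies in $\overline\Gamma_E\cap\mathcal H_-$ and hence, by Lemma~\ref{Lemma:inclusion}, satisfies $\innerproduct{y}{(B+E)z_-}=0$ for every $z_-\in F_-$. The goal is then to identify this orthogonality with a statement about the form $\overline{b}_E$.

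First I would check that for $y\in\mathcal D\big(\,\overline b\big)$ and $z_-\in F_-$, one has the identity
\[
\overline{b}_E(y,z_-)=\innerproduct{y}{(B+E)\,z_-}\,.
\]
Indeed, since $F_-\subset\mathcal D(B)$ with $B\upharpoonright_{F_-}=-\Lambda_- A\upharpoonright_{F_-}$, the equality $b_E(u,z_-)=\innerproduct{u}{(B+E)\,z_-}$ holds for every $u\in F_-$; approximating $y$ by a sequence $(y_n)\subset F_-$ in the $\overline{b}_E$-norm, the left-hand side passes to $\overline{b}_E(y,z_-)$ while the right-hand side, which is continuous in $\|\cdot\|$ with $z_-$ fixed, passes to $\innerproduct{y}{(B+E)\,z_-}$.

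With this identity in hand, the orthogonality condition coming from Lemma~\ref{Lemma:inclusion} becomes $\overline{b}_E(y,z_-)=0$ for every $z_-\in F_-$. By the very definition of $\overline{b}$ as the closure of $b$, the subspace $F_-$ is dense in $\mathcal D\big(\,\overline b\big)$ for the $\overline{b}_E$-norm, so this extends to
\[
\overline{b}_E(y,z)=0\qquad\forall\,z\in\mathcal D\big(\,\overline b\big)\,.
\]
Specializing to $z=y$ gives $\overline{b}_E(y)=0$. Finally, Assumption~\eqref{H2} yields $b(x_-)\ge-\lambda_0\|x_-\|^2$ on $F_-$, hence $b_E(x_-)\ge(E-\lambda_0)\|x_-\|^2$, and this coercivity survives the closure: $\overline{b}_E(x)\ge(E-\lambda_0)\|x\|^2$ on $\mathcal D\big(\,\overline b\big)$. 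Since $E>\lambda_0$, we conclude $y=0$.

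The only subtle point is the passage from the orthogonality in $\mathcal H$ to the form-orthogonality in $\overline{b}_E$, but this is precisely where the structure of the Friedrichs extension $B$ and the density of $F_-$ in the form domain $\mathcal D\big(\,\overline b\big)$ cooperate; once this identification is made, the positive definiteness of $\overline{b}_E$ forces $y=0$ immediately.
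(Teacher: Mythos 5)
Your proof is correct and takes essentially the same route as the paper: both start from Lemma~\ref{Lemma:inclusion} and show that a vector of $\mathcal D\big(\,\overline{b}\big)$ orthogonal to $(B+E)(F_-)$ must vanish, using the Friedrichs-form structure of $B$. The only (cosmetic) difference is that you run the final step in quadratic-form language --- the identity $\overline{b}_E(y,z_-)=\innerproduct{y}{(B+E)z_-}$, density of $F_-$ in $\mathcal D\big(\,\overline{b}\big)$ for the form norm, and the coercivity $\overline{b}_E\ge(E-\lambda_0)\,\|\cdot\|^2$ coming from \eqref{H2} --- whereas the paper encodes exactly the same facts via $(B+E)^{1/2}$ and the density of $(B+E)^{1/2}F_-$ in $\mathcal H_-$.
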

\begin{proof}
From~\eqref{int1}, we have
\begin{multline*}
\overline \Gamma_E \cap \mathcal D\big(\,\overline{b}\big)= \big(\,\overline \Gamma_E \cap \mathcal H_-\big)\cap\mathcal D\((B+E)^{1/2}\)\\
\subset\big((B+E)(F_-)\big)^\perp \cap \mathcal D\((B+E)^{1/2}\)= (B+E)^{-1/2} \(\(B+E)^{1/2} F_-\)^\perp\)= \{0\}\,,
\end{multline*}
since $(B+E)^{1/2}F_-$ is dense in $\mathcal H_-$.
\end{proof}

Proposition \ref{Prop:lem2} tells us that the sum of $\overline{\Gamma}_E$ and $\mathcal D(\overline{b})$ is algebraically direct. Let us denote by $\pi_E:\,\overline{\Gamma}_E\dot{+} \mathcal D\big(\,\overline{b}\big)\to\overline{\Gamma}_E$ and $\,\pi'_E:\,\overline{\Gamma}_E\dot{+} \mathcal D\big(\,\overline{b}\big)\to\mathcal D\big(\,\overline{b}\big)$ the associated projectors. In Section \ref{Section:min-max} we will need some informations on the continuity of the restrictions $\pi_E\!\upharpoonright_{\mathcal D(\tilde A)}\,$ and $\pi'_E\!\upharpoonright_{\mathcal D(\tilde A)}\,$. These operators are not necessarily continuous for the $\|\cdot\|$ norm, but we have the following result.
\begin{proposition}\label{continuity}
Under assumptions~\eqref{H1}-\eqref{H2}-\eqref{H3}, for all $E>\lambda_0\,$, one has
\[\pi_E\left(\mathcal D(\widetilde A)\right)\subset \mathcal D(T_E)\;\hbox{ and }\;\pi'_E\left(\mathcal D(\widetilde A)\right)\subset \mathcal D(B)\,.\]
As a consequence, the domain of $\widetilde A$ may also be written as
\be{dom bis}
\mathcal D\(\widetilde A\,\)= \big(\mathcal D\(T_E\) \,\dot{+}\, \mathcal D(B)\big)\cap \mathcal D\(A^*\)\,.
\ee
Moreover the operator $\pi_E\!\upharpoonright_{\mathcal D(\tilde A)}\,$ is continuous for the norms $\| \cdot\|_{\mathcal D(\widetilde A)}\,$, $\|\cdot\|_{\mathcal D(T_E)}\,$ and the operator $\pi'_E\!\upharpoonright_{\mathcal D(\tilde A)}\,$ is continuous for the norms $\| \cdot\|_{\mathcal D(\widetilde A)}\,$, $\| \cdot\|_{\mathcal D(B)}\,$. More precisely, there is a positive constant $C_E\,$ such that for all $X\in \mathcal D(\widetilde A)\,$,
\[
\| \pi'_E (X)\|_{\mathcal D(B)}\le C_E\,\| \Lambda_-(\widetilde A -E)X\|\quad\mbox{and}\quad\| \pi_E (X)\|_{\mathcal D(T_E)}\le C_E\,\| X \|_{\mathcal D(\widetilde A)} \,.
\]
The constant $C_E$ remains uniformly bounded when $E$ stays away from $\lambda_0$ and $\,\infty\,$.
\end{proposition}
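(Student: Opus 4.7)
Fix $X\in\mathcal D(\widetilde A)$ and write $X=x+z_-$ with $x=\pi_E(X)\in\mathcal D(\overline q_E)$ and $z_-=\pi'_E(X)\in\mathcal D(\overline b)$; uniqueness of this decomposition is guaranteed by Proposition~\ref{Prop:lem2}. The whole argument will be driven by the identity of Proposition~\ref{Prop:formulaoperator},
\[
\innerproduct{(\widetilde A-E)X}{U}=\overline q_E(x,u)-\overline b_E(z_-,v_-),
\]
valid for every $U=u+v_-$ with $u\in\mathcal D(\overline q_E)$ and $v_-\in\mathcal D(\overline b)$. I will test this identity separately against vectors of the form $v_-$ alone (to recover $z_-\in\mathcal D(B)$) and of the form $u$ alone (to recover $x\in\mathcal D(T_E)$), invoking each time the standard characterisation of the domain of a Friedrichs extension: a form-domain element belongs to the operator domain exactly when the quadratic form is $\|\cdot\|$-continuous in its second argument.

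Taking $u=0$ with $v_-$ arbitrary in $\mathcal D(\overline b)$, and noting that $v_-\in\mathcal H_-$, the identity reduces to
\[
\overline b_E(z_-,v_-)=-\innerproduct{\Lambda_-(\widetilde A-E)X}{v_-},
\]
whose right-hand side is $\|\cdot\|$-continuous in $v_-$. Hence $z_-\in\mathcal D(B)$ with $(B+E)z_-=-\Lambda_-(\widetilde A-E)X$. Because $B\ge -\lambda_0$ (a direct consequence of~\eqref{H2} and the Friedrichs construction), one has $\|(B+E)^{-1}\|\le (E-\lambda_0)^{-1}$, and I conclude at once that $\|z_-\|_{\mathcal D(B)}\le C_E\,\|\Lambda_-(\widetilde A-E)X\|$, where $C_E$ is controlled by $(E-\lambda_0)^{-1}$ and $E/(E-\lambda_0)$, both of which are bounded when $E$ stays in a compact subset of $(\lambda_0,\infty)$.

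Symmetrically, taking $v_-=0$ and $u$ arbitrary in $\mathcal D(\overline q_E)\subset\overline\Gamma_E$ yields
\[
\overline q_E(x,u)=\innerproduct{(\widetilde A-E)X}{u}=\innerproduct{\Pi_E(\widetilde A-E)X}{u},
\]
the last equality coming from $u\in\overline\Gamma_E$. The right-hand side is $\|\cdot\|$-continuous in $u$ on $\overline\Gamma_E$, so the Friedrichs characterisation of $T_E$ puts $x$ in $\mathcal D(T_E)$ with $T_E x=\Pi_E(\widetilde A-E)X$. Combining $\|T_Ex\|\le\|(\widetilde A-E)X\|$ with $\|x\|\le\|X\|+\|z_-\|$ and the previous bound on $\|z_-\|$ delivers $\|x\|_{\mathcal D(T_E)}\le C_E\,\|X\|_{\mathcal D(\widetilde A)}$ with the same sort of $E$-dependence.

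The above already yields the inclusions $\pi_E(\mathcal D(\widetilde A))\subset\mathcal D(T_E)$, $\pi'_E(\mathcal D(\widetilde A))\subset\mathcal D(B)$, the two announced norm estimates, and the containment $\mathcal D(\widetilde A)\subset(\mathcal D(T_E)\dot{+}\mathcal D(B))\cap\mathcal D(A^*)$. The reverse inclusion in \eqref{dom bis} is trivial, since $\mathcal D(T_E)\subset\mathcal D(\overline q_E)$ and $\mathcal D(B)\subset\mathcal D(\overline b)$ give $\mathcal D(T_E)\dot{+}\mathcal D(B)\subset\mathcal D(\overline q_E)\dot{+}\mathcal D(\overline b)$, while the latter intersected with $\mathcal D(A^*)$ is exactly $\mathcal D(\widetilde A)$ by Proposition~\ref{Prop:thm1-b}. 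I do not expect any genuine obstacle: the entire argument is a testing-and-continuity game governed by the master identity of Proposition~\ref{Prop:formulaoperator}; the only bookkeeping task is the tracking of the uniform $E$-dependence of $C_E$, which follows from the explicit upper bounds on $\|(B+E)^{-1}\|$ and on $E/(E-\lambda_0)$.
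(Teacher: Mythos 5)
Your proposal is correct and follows essentially the same route as the paper: the paper also applies the identity of Proposition~\ref{Prop:formulaoperator} with the test choices $U=u\in\mathcal D(\overline q_E)$ and $U=v_-\in\mathcal D\big(\overline b\big)$, and then passes from the resulting weak system~\eqref{Pf} to the strong equations $T_Ex=\Pi_E(\widetilde A-E)X$, $(B+E)z_-=-\Lambda_-(\widetilde A-E)X$, which is exactly your form-representation ("testing and continuity") step, yielding the same estimates and the same $E$-dependence of $C_E$.
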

\begin{proof}
Note that Formula \eqref{inverse} for the inverse of $\widetilde A-E$ already proves the two inclusions $\pi_E\left(\mathcal D(\widetilde A)\right)\subset \mathcal D(T_E)$ and $\pi'_E\left(\mathcal D(\widetilde A)\right)\subset \mathcal D(B)$ when $E$ is in $(\lambda_0,\lambda_\infty)\setminus\{\lambda_k\,:\,k_0\leq k<\infty\}\,$. But we want to prove a statement for {\it all} values of $E$ in $(\lambda_0,\infty)\,$ and this requires some additional work.

In the arguments below, the constant $C_E$ changes value from line to line but we keep the same notation for the sake of simplicity. We shall use the weak form~\eqref{Pf} of the equation $(\widetilde A -E)X=f$ and the equivalent system of strong equations \eqref{strong}, introduced in the proof of Proposition \ref{Prop:thm1-b}. In that proof, $f$ was given, $X=x+z_-$ was unknown and it was shown that for each $E>\lambda_0$ the solvability of~\eqref{Pf} is a sufficient condition for the solvability of $(\widetilde A -E)X=f$. But it turns out that this condition is also necessary. Indeed, taking $E>\lambda_0\,$, $X\in \mathcal D\big(\widetilde A\,\big)\,$ and defining
\[x:=\pi_E(X)\,, \quad z_-:=\pi'_E(X)\,, \quad f:=(\widetilde A -E)X\,,\]
we can apply Formula \eqref{egalite-operator} of Proposition~\ref{Prop:formulaoperator} with the successive choices $U=u\in \mathcal D(\overline{q}_E)\,$, $U=v_-\in  \mathcal D(\overline{b})\,$ and this tells us that $(x,z_-)$ satisfies~\eqref{Pf}. Then, the second equation of the equivalent system \eqref{strong} implies that $z_-=-(B+E)^{-1}\Lambda_-(\widetilde A -E)X$, so $z_-$ is in $\mathcal D(B)$ with an estimate of the form
\[\| z_-\|_{\mathcal D(B)}\le C_E\,\| \Lambda_-(\widetilde A -E)X \| \,.\]
This estimate on $z_-$ implies in turn the estimate $\|x\| \le C_E\,\| X\|_{\mathcal D(\widetilde A)}\,$, since $x=X-z_-\,$, $\| X\|\leq \| X\|_{\mathcal D(\widetilde A)}\,$ and $\| z_-\|\leq\| z_-\|_{\mathcal D(B)}\,$.
Moreover, the first equation in~\eqref{strong} exactly means that $x$ is in $\mathcal D(T_E)$ and $T_E x=\Pi_E (\widetilde A -E)X$, so we finally get the estimate
\[\|x\|_{\mathcal D(T_E)} \le C_E\,\| X\|_{\mathcal D(\widetilde A)}\,.
\]
We thus have the desired inclusions $\pi_E\left(\mathcal D(\widetilde A)\right)\subset \mathcal D(T_E)$ and $\pi'_E\left(\mathcal D(\widetilde A)\right)\subset \mathcal D(B)\,$, hence
$\mathcal D\big(\widetilde A\,\big)\subset \mathcal D(T_E) \dot{+} \mathcal D(B)\,$. Then, remembering the definition $\mathcal D\big(\widetilde A\,\big)=\big(\mathcal D(\overline{q}_E)\,\dot{+}\,\mathcal D\big(\,\overline{b}\big)\big)\cap\mathcal D\big(A^*\big)\,$ and the inclusions $\mathcal D(T_E)\subset \mathcal D(\overline{q}_E)\,$, $\mathcal D(B)\subset \mathcal D\big(\,\overline{b}\big)\,$, one easily gets \eqref{dom bis}.
This ends the proof of Proposition \ref{continuity}.
\end{proof}
\begin{remark}\label{simplified}
In Section \ref{Section:min-max}, we do not use all the information contained in Proposition~\ref{continuity}: we only need the weaker estimates
\be{continuous}
\| \pi'_E (X)\|\le C_E\,\| \Lambda_-(\widetilde A -E)X\|\quad\mbox{and}\quad\| \pi_E (X)\|\le C_E\,\| X \|_{\mathcal D(\widetilde A)} \,.
\ee
\end{remark}

\subsection{Variational interpretation when \texorpdfstring{$k_0=1$}{k0=1}}\label{Section: variationalinterpretation}

In the special case $k_0=1$, for $\lambda_0<E<\lambda_1$ the quadratic form $\overline{q}_E$ is positive definite as well as $\overline{b}_E$ and the existence and uniqueness of a solution to the weak problem \eqref{Pf} directly follows from the Riesz isomorphism theorem. One can even give an interpretation of \eqref{Pf} that generalizes the minimization principle for the Friedrichs extension of semibounded operators mentioned in the introduction. We describe it in this short subsection, as a side remark.

\noindent Assuming that $E\in(\lambda_0,\lambda_1)\,$ and given $f\in \mathcal H$, let us consider the inf-sup problem
\[
I_{E,f}=\inf_{x_+\in F_+}\, \sup_{y_-\in F_-} \Big\{ \tfrac12\innerproduct{x_++y_-}{(A-E)\,(x_++y_-)} - \innerproduct{f}{x_++y_-}\Big\}\,.
\]
Of course, in general, $I_{E,f}$ is not attained, but using the decomposition \eqref{concave} and replacing $F=F_+\oplus F_-\,$ by the larger space $\mathcal D(\overline{q}_E)\dot{+}\mathcal D\big(\,\overline{b}\big)$, one can transform it into a min-max:
\begin{align*}
I_{E,f}&=\inf_{x_+\in F_+}\, \sup_{z_-\in \mathcal D(B)} \Big\{ \tfrac12\,q_E(x_+ + L_E\,x_+ )-\innerproduct{f}{x_++L_E\,x_+}
-\tfrac12\,\overline{b}_E(z_-)- \innerproduct{f}{z_-}\Big\}\\
&=\, \inf_{x_+\in F_+} \Big\{ \tfrac12\,q_E(x_+ + L_E\,x_+ )-\innerproduct{f}{x_++L_E\,x_+} \Big\}\,-\inf_{z_-\in \mathcal D(B)} \Big\{ \tfrac12\,\overline{b}_E(z_-)+ \innerproduct{f}{z_-}\Big\}\\
&=\, \min_{x\in \mathcal D(\overline{q}_E)}\Big\{\tfrac12\, \overline q_E(x)-(f, x)\Big\}-\min_{z_-\in \mathcal D\big(\,\overline{b}\big)}\Big\{ \tfrac12 \overline{b}_E (z_-)+ \innerproduct{f}{z_-}\Big\}\,.
\end{align*}
Each of these last two convex minimization problems has a unique solution, and the system of Euler-Lagrange equations solved by the two minimizers is just~\eqref{Pf}, so their sum is $X=\(\widetilde A-E\)^{-1}\kern-2ptf$.

\section{The min-max principle}\label{Section:min-max}

In this section, we establish the min-max principle for the eigenvalues of $\widetilde A$ that constitutes the last part of Theorem \ref{Theorem:thm1}:
\begin{proposition}\label{Prop:thm1-c} Under assumptions~\eqref{H1}-\eqref{H2}-\eqref{H3}, for $k\ge k_0$ the numbers $\lambda_k$ satisfying $\lambda_k<\lambda_\infty$ are all the eigenvalues of $\widetilde A$ in the spectral gap $(\lambda_0, \lambda_\infty)$ counted with multiplicity. Moreover one has
\[\lambda_\infty=\inf \big(\sigma_{\!\rm ess}(\widetilde A)\cap (\lambda_0,\infty)\big)\,.\]
\end{proposition}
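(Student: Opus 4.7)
The plan is to establish a precise correspondence between the spectrum of $\widetilde A$ at a point $E>\lambda_0$ and the spectrum of $T_E$ near $0$, and then invoke Proposition~\ref{Prop:thm1-abis} to conclude. The tools at hand are the identity of Proposition~\ref{Prop:formulaoperator}, the resolvent formula~\eqref{inverse}, the weak problem $(\mathcal P_f)$ appearing in the proof of Proposition~\ref{Prop:thm1-b}, and the continuity of $\pi_E,\,\pi'_E$ from Proposition~\ref{continuity}.

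First I would prove the kernel identification $\ker(\widetilde A - E)=\ker(T_E)$ for every $E>\lambda_0$. Given $X\in\mathcal D(\widetilde A)$ with $(\widetilde A-E)X=0$, Proposition~\ref{continuity} provides the splitting $X=x+z_-$ with $x=\pi_E X\in\mathcal D(T_E)$ and $z_-=\pi'_E X\in\mathcal D(B)$. Applying~\eqref{egalite-operator} with $U=v_-\in\mathcal D(\overline b)$ gives $\overline b_E(z_-,v_-)=0$ for every such $v_-$, and since $E>\lambda_0$ the form $\overline b_E$ is positive definite, forcing $z_-=0$; taking next $U=u\in\mathcal D(\overline{q}_E)$ yields $\langle T_E x,u\rangle=\overline{q}_E(x,u)=0$ for all $u$ in a dense subset of $\overline\Gamma_E$, hence $T_E x=0$. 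Conversely, if $T_E x=0$, identity~\eqref{egalite-special} with $z_-=0$ immediately gives $\langle x,(A-E)U\rangle=\overline{q}_E(x,u)=0$ for every $U\in F$, so $x\in\mathcal D(A^*)$ with $A^*x=Ex$, and therefore $x\in\mathcal D(\widetilde A)$. Combining this correspondence with the eigenvalue description of $T_E$ in Proposition~\ref{Prop:thm1-abis} identifies the eigenvalues of $\widetilde A$ in the gap, with the right multiplicities.

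Next I would use the resolvent formula~\eqref{inverse}: for $E\in(\lambda_0,\lambda_\infty)\setminus\{\lambda_k:k\ge k_0\}$ Proposition~\ref{Prop:thm1-abis} makes $T_E$ boundedly invertible, so the right-hand side of~\eqref{inverse} exhibits a bounded two-sided inverse of $\widetilde A-E$, putting such $E$ in $\rho(\widetilde A)$. Because $\lambda_\infty=\lim\lambda_k$, every $\lambda_k<\lambda_\infty$ is an isolated point of $\{\lambda_j\}_{j\ge 1}$, and combined with the finite-dimensional kernel from step~1 this places all such $\lambda_k$ in $\sigma_{\mathrm{disc}}(\widetilde A)$. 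In particular $\sigma_{\mathrm{ess}}(\widetilde A)\cap(\lambda_0,\lambda_\infty)=\emptyset$, which gives the inequality $\lambda_\infty\le\inf\bigl(\sigma_{\mathrm{ess}}(\widetilde A)\cap(\lambda_0,\infty)\bigr)$.

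The main obstacle is the reverse inequality when $\lambda_\infty<\infty$, where two subcases must be distinguished according to the monotone sequence $(\lambda_k)$. If infinitely many $\lambda_k$ are strictly less than $\lambda_\infty$ then these eigenvalues of $\widetilde A$ accumulate at $\lambda_\infty$, so $\lambda_\infty$ lies in the closed set $\sigma(\widetilde A)$ and is not isolated, hence belongs to $\sigma_{\mathrm{ess}}(\widetilde A)$. The subtler case is $\lambda_k=\lambda_\infty$ for all $k\ge k_*$: here Proposition~\ref{Prop:thm1-abis} only asserts $0=\min\sigma_{\mathrm{ess}}(T_{\lambda_\infty})$, which need not be an eigenvalue, so the kernel identification of step~1 is not enough. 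I would start from a Weyl sequence $(x_n)\subset\mathcal D(T_{\lambda_\infty})$ at $0$ and convert it into a Weyl sequence for $\widetilde A$ at $\lambda_\infty$, using the resolvents $(\widetilde A-E_n)^{-1}$ with $E_n\nearrow\lambda_\infty$ chosen in $\rho(\widetilde A)\cap(\lambda_0,\lambda_\infty)$ (possible since only finitely many $\lambda_k$ lie below $\lambda_\infty$) and formula~\eqref{inverse} to build admissible $X_n\in\mathcal D(\widetilde A)$. The delicate point is that $x_n$ itself need not lie in $\mathcal D(\widetilde A)$, so the construction must preserve $\|X_n\|\approx 1$, $X_n\rightharpoonup 0$ and $\|(\widetilde A-\lambda_\infty)X_n\|\to 0$ simultaneously; this step is the genuine work of the proof.
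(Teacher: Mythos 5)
Your first two steps are sound: the kernel identification $\ker(\widetilde A-E)=\ker(T_E)$ via Proposition~\ref{Prop:formulaoperator}, Proposition~\ref{continuity} and the positive definiteness of $\overline b_E$ is correct (and is a somewhat more direct route to the eigenvalue/multiplicity statement than the paper's argument), and the fact that $(\lambda_0,\lambda_\infty)\setminus\{\lambda_k\}\subset\rho(\widetilde A)$ is already contained in Proposition~\ref{Prop:thm1-b} via~\eqref{inverse}, so the inclusion of the gap eigenvalues and the inequality $\lambda_\infty\le\inf\big(\sigma_{\rm ess}(\widetilde A)\cap(\lambda_0,\infty)\big)$ are established. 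The case where infinitely many $\lambda_k$ lie strictly below $\lambda_\infty$ is also handled correctly by accumulation of distinct discrete eigenvalues.

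The genuine gap is exactly where you say "this step is the genuine work of the proof": the stationary case $\lambda_k=\lambda_\infty$ for $k\ge k_*$, where Proposition~\ref{Prop:thm1-abis} only gives $0=\min\sigma_{\rm ess}(T_{\lambda_\infty})$ and $0$ need not be an eigenvalue. You propose to convert a Weyl sequence $(x_n)$ of $T_{\lambda_\infty}$ at $0$ into one for $\widetilde A$ at $\lambda_\infty$ using resolvents $(\widetilde A-E_n)^{-1}$ with $E_n\nearrow\lambda_\infty$, but you do not carry this out, and the natural candidate $X_n=(\widetilde A-E_n)^{-1}x_n$ runs into a circularity: $(\widetilde A-\lambda_\infty)X_n=x_n+(E_n-\lambda_\infty)X_n$, so after normalization one needs $\|X_n\|\to\infty$, i.e.\ a lower bound on the resolvent near $\lambda_\infty$ applied to the specific vectors $x_n$ --- which is essentially what one is trying to prove. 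What is actually needed is a quantitative transfer estimate: for $x$ in a spectral subspace ${\mathcal R}\big(\mathbbm 1_{(-\delta,\delta)}(T_E)\big)$ and $Y\in\mathcal D(\widetilde A)$, Proposition~\ref{Prop:formulaoperator} gives $\innerproduct{x}{(\widetilde A-E)Y}=\innerproduct{T_Ex}{\pi_E Y}$, hence $\big|\innerproduct{x}{(\widetilde A-E)Y}\big|\le C_E\,\delta\,\|x\|\,\|Y\|_{\mathcal D(\widetilde A)}$ by~\eqref{continuous}; combined with a finite-rank contradiction argument (and the companion estimate on $\pi'_E$ for the converse direction) this yields the full correspondence of Lemma~\ref{TE equiv A}, namely $\mathrm{Rank}\big(\mathbbm 1_{(-\delta,\delta)}(T_E)\big)\ge r$ for all $\delta$ iff $\mathrm{Rank}\big(\mathbbm 1_{(E-\varepsilon,E+\varepsilon)}(\widetilde A)\big)\ge r$ for all $\varepsilon$, which in particular transfers $0\in\sigma_{\rm ess}(T_{\lambda_\infty})$ to $\lambda_\infty\in\sigma_{\rm ess}(\widetilde A)$ without any Weyl-sequence construction. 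Until this (or an equivalent) essential-spectrum transfer is proved, the equality $\lambda_\infty=\inf\big(\sigma_{\rm ess}(\widetilde A)\cap(\lambda_0,\infty)\big)$ is not established in the stationary case.
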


\noindent Even if our assumptions are weaker and our formalism slightly different, the arguments in the proof of Proposition \ref{Prop:thm1-c} are essentially the same as in~\cite[\S~2]{DES00a} (but some details are missing in that reference) and~\cite[\S~2.6]{SST20}. This proof is based on two facts:
 
 - A relation between the min-max levels $\lambda_k$ and the spectrum of $T_E$ which is provided by the second part of Proposition \ref{Prop:thm1-abis}.
 
 - A relation between the spectra of $T_E$ and $\widetilde A$ which is provided by the next lemma, and whose proof relies on Proposition~\ref{Prop:formulaoperator} and on the estimates \eqref{continuous} of Remark \ref{simplified}.

\begin{lemma}\label{TE equiv A}
Under assumptions~\eqref{H1}~\eqref{H2}-\eqref{H3},
let $E>\lambda_0$ and let $r$ be a positive integer. The two following properties are equivalent:

{\it (i)} For all $\delta>0\,$, $\mathrm{Rank}\big({\mathbbm 1}_{(-\delta,\delta)}(T_E)\big)\ge r\,$.

{\it (ii)} For all $\varepsilon >0\,$, $\mathrm{Rank}\big({\mathbbm 1}_{(E-\varepsilon,E+\varepsilon)}\big(\widetilde A\,\big)\big)\ge r\,$.

In other words: $0\in\sigma_{\!\rm ess}(T_E)$ if and only if $E\in\sigma_{\!\rm ess}(\widetilde A)$; $0\in\sigma_{\!\rm disc}(T_E)$ if and only if $E\in\sigma_{\!\rm disc}(\widetilde A)$ and when this happens they have the same multiplicity; $0\in\rho(T_E)$ if and only if $E\in\rho(\widetilde A)$.
\end{lemma}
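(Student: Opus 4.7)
The plan is to establish (i) $\Leftrightarrow$ (ii) by constructing, in each direction, an explicit linear map between $r$-dimensional spectral subspaces of $T_E$ (near $0$) and of $\widetilde A$ (near $E$), with quantitative control: if the rank estimate holds with small $\delta$ on one side, the transported subspace witnesses the rank estimate on the other side with a correspondingly small $\varepsilon$. The three ``in other words'' rephrasings then follow from this equivalence by letting $r$ be arbitrary ($\sigma_{\!\mathrm{ess}}$), by looking at the largest finite $r$ that works for some (equivalently every) small $\delta$ ($\sigma_{\!\mathrm{disc}}$ and its multiplicity), and by taking $r=1$ in the contrapositive ($\rho$).

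For (ii) $\Rightarrow$ (i), starting from $W\subset \mathcal D(\widetilde A)$ of dimension $r$ with $\|(\widetilde A-E)X\|\le\varepsilon\|X\|$ on $W$, I would decompose $X=\pi_E X+\pi'_E X=:x+z_-$ according to Proposition~\ref{continuity}. Testing Proposition~\ref{Prop:formulaoperator} against $U=u\in\mathcal D(T_E)$ (so $v_-=0$) yields $\Pi_E(\widetilde A-E)X=T_E x$, and testing against $U=v_-\in\mathcal D(B)$ (so $u=0$) yields $\Lambda_-(\widetilde A-E)X=-(B+E)z_-$. Hence $\|T_E x\|\le\varepsilon\|X\|$, while the estimate \eqref{continuous} forces $\|z_-\|\le C_E\varepsilon\|X\|$, so that $\|X\|\le\|x\|/(1-C_E\varepsilon)$ once $C_E\varepsilon<1$. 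Injectivity of $\pi_E\upharpoonright_W$ then holds as soon as $\varepsilon<E-\lambda_0$: any $X\in W$ with $\pi_E X=0$ would reduce to $X=z_-$, and the previous identity would give $\|(\widetilde A-E)X\|\ge\|(B+E)z_-\|\ge(E-\lambda_0)\|X\|$, a contradiction. Setting $V:=\pi_E(W)$ produces a subspace of $\mathcal D(T_E)$ of dimension $r$ on which $\|T_E x\|\le\delta\|x\|$ with $\delta=\varepsilon/(1-C_E\varepsilon)\to 0$ as $\varepsilon\to 0$, which is exactly~(i).

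The harder direction (i) $\Rightarrow$ (ii) is built around the explicit lift
\[
X(x):=x-(B+E)^{-1}\Lambda_-(T_E x)\,,\qquad x\in\mathcal D(T_E)\,,
\]
for which I would prove that $X(x)\in\mathcal D(\widetilde A)$ and $(\widetilde A-E)X(x)=T_E x$. Membership in $\mathcal D(\overline q_E)\dot+\mathcal D(\overline b)$ is immediate since $x\in\mathcal D(\overline q_E)$ and $(B+E)^{-1}\Lambda_-(T_E x)\in\mathcal D(B)\subset\mathcal D(\overline b)$, so the substantive task is $X(x)\in\mathcal D(A^*)$. I would apply the identity \eqref{egalite-special} from the proof of Proposition~\ref{Prop:formulaoperator} to $X(x)=x+z_-$ (with $x\in\overline\Gamma_E$ and $z_-=-(B+E)^{-1}\Lambda_-(T_Ex)\in\mathcal H_-$) tested against an arbitrary $U\in F$, using $\innerproduct{x}{S_E u}=\innerproduct{T_E x}{u}$ for $u\in\Gamma_E=\mathcal D(S_E)\subset\mathcal D(T_E)$ (since $T_E$ extends $S_E$ and is self-adjoint) together with $\innerproduct{z_-}{(B+E)v_-}=-\innerproduct{\Lambda_-(T_Ex)}{v_-}$. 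After expanding $u=\Lambda_+U+L_E\Lambda_+U$ and $v_-=\Lambda_-U-L_E\Lambda_+U$, the two $L_E\Lambda_+U$-contributions cancel (both $\Lambda_-T_Ex$ and $L_E\Lambda_+U$ sit in $\mathcal H_-$), leaving $\innerproduct{X(x)}{(A-E)U}=\innerproduct{T_Ex}{U}$. This single identity simultaneously yields $X(x)\in\mathcal D(A^*)$ and $(A^*-E)X(x)=T_Ex$, hence $X(x)\in\mathcal D(\widetilde A)$ with $(\widetilde A-E)X(x)=T_Ex$. When $T_E$ happens to be invertible, this is hidden inside~\eqref{inverse}; but the direct computation is needed precisely at the interesting values $E=\lambda_k$ where \eqref{inverse} fails, and I expect this distributional verification to be the main obstacle.

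Once the key identity is in hand, the remaining estimates are routine. From $(\widetilde A-E)X(x)=T_Ex$ I get $\|(\widetilde A-E)X(x)\|=\|T_Ex\|\le\delta\|x\|$ and $\|X(x)-x\|\le(E-\lambda_0)^{-1}\|T_Ex\|\le\delta(E-\lambda_0)^{-1}\|x\|$, so $\|X(x)\|\ge(1-\delta/(E-\lambda_0))\|x\|$ for $\delta<E-\lambda_0$, which rearranges into $\|(\widetilde A-E)X(x)\|\le\varepsilon\|X(x)\|$ with $\varepsilon=\delta(E-\lambda_0)/(E-\lambda_0-\delta)\to 0$ as $\delta\to 0$. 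The map $x\mapsto X(x)$ is injective by Proposition~\ref{Prop:lem2}: if $X(x)=0$ then $x=(B+E)^{-1}\Lambda_-(T_Ex)$ lies in $\overline\Gamma_E\cap\mathcal D(\overline b)=\{0\}$. Setting $W:=X(V)$ then produces an $r$-dimensional subspace of $\mathcal D(\widetilde A)$ witnessing (ii), which closes the loop.
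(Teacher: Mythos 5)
Your proof is correct, but it takes a genuinely different route from the paper's, most visibly in the direction (i) $\Rightarrow$ (ii). The paper proves both implications by contradiction, using only the final statement of Proposition~\ref{Prop:formulaoperator} and the weak estimates \eqref{continuous} (cf.\ Remark~\ref{simplified}): near-kernel vectors of $T_E$ (resp.\ of $\widetilde A-E$, after projection by $\pi_E$) are paired against preimages under the other operator, which exist exactly when the desired rank bound fails; no element of $\mathcal D\big(\widetilde A\big)$ is ever manufactured from a vector of $\mathcal D(T_E)$. You instead build the explicit lift $X(x)=x-(B+E)^{-1}\Lambda_-T_Ex$ and verify $\big(\widetilde A-E\big)X(x)=T_Ex$ through the intermediate identity \eqref{egalite-special} -- which the paper does establish for all $x\in\overline{\Gamma}_E$, $z_-\in\mathcal H_-$ and $U\in F$, so your use of it is legitimate -- combined with $\innerproduct{x}{S_Eu}=\innerproduct{T_Ex}{u}$ (valid since $T_E$ extends $S_E$ and is self-adjoint) and injectivity via Proposition~\ref{Prop:lem2}; this is in effect the extension of the resolvent formula \eqref{inverse} to the values $E=\lambda_k$ where $T_E$ fails to be invertible. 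In the converse direction you use the full strength of Proposition~\ref{continuity} ($\pi_E\big(\mathcal D\big(\widetilde A\big)\big)\subset\mathcal D(T_E)$ together with the strong equations $T_E\,\pi_EX=\Pi_E\big(\widetilde A-E\big)X$ and $(B+E)\,\pi'_EX=-\Lambda_-\big(\widetilde A-E\big)X$), which yields direct two-sided quasi-mode bounds and dispenses with the paper's contradiction step. What your approach buys is an explicit, quantitative correspondence between approximate eigenvectors of $T_E$ near $0$ and of $\widetilde A$ near $E$; what the paper's buys is economy of input, since it only relies on \eqref{continuous} and the stated form of Proposition~\ref{Prop:formulaoperator}. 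One small point you leave implicit: passing from an $r$-dimensional subspace on which $\|(T-\mu)y\|\le\delta\|y\|$ to $\mathrm{Rank}\big({\mathbbm 1}_{(\mu-\delta',\mu+\delta')}(T)\big)\ge r$ for $\delta'>\delta$ requires the standard spectral-calculus argument that the paper writes out as its contradiction step; it is routine, but deserves a line.
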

\begin{proof}
If {\it (i)} holds true, for each $\delta>0$ there is a subspace $\mathcal X_\delta$ of $\,{\mathcal R}\big({\mathbbm 1}_{(-\delta,\delta)}(T_E)\big)$ of dimension $r\,$ (we recall the notation $\,{\mathcal R}(L)$ for the range of an operator $L$). Then we have $\mathcal X_\delta\subset \mathcal D(T_E)\subset \mathcal D(\overline{q}_E)$. Using Proposition~\ref{Prop:formulaoperator} and the second estimate of \eqref{continuous} we find that for all $x\in \mathcal X_\delta$ and $Y\in \mathcal D\big(\widetilde A\,\big)\,$,
\[ \left|\innerproduct{x}{(\widetilde A-E)Y}\right|=\left|\overline{q_E}(x,\pi_E(Y))\right|=\left|\innerproduct{T_E x}{\pi_E(Y)}\right|\le \delta\|x\|\,\| \pi_E(Y)\|\le C_E\,\delta\|x\|\,\| Y\|_{\mathcal D(\widetilde A)}\,.\]
Assume, in addition, that the property {\it (ii)} does not hold true. This means that for some $\varepsilon_0>0\,$, $\mathrm{Rank}\big({\mathbbm 1}_{(E-\varepsilon_0,E+\varepsilon_0)}\big(\widetilde A\,\big)\big) \le r-1\,$. Then for each $\delta>0$ there is $x_\delta$ in $\mathcal X_\delta$ such that $\|x_\delta\|=1$ and ${\mathbbm 1}_{(E-\varepsilon_0,E+\varepsilon_0)}\big(\widetilde A\,\big) x_\delta=0\,$. So there is $Y_\delta\in \mathcal D\big(\widetilde A\,\big)\,$ such that $(\widetilde A - E ) Y_\delta = x_\delta$ and
$\| Y_\delta \| \le \varepsilon_0^{-1}\,$. We thus get $\innerproduct{x_\delta}{\big(\widetilde A-E\big)Y_\delta}=\|x_\delta\|^2=1$ and $C_E\|x_\delta\| \| Y_\delta\|_{\mathcal D(\widetilde A)}$ is bounded independently of $\delta$. So, taking $\delta$ small enough we obtain $\big|\innerproduct{x_\delta}{\big(\widetilde A-E\big)Y_\delta}\big| > C_E\,\delta\|x_\delta\|\,\| Y_\delta\|_{\mathcal D(\widetilde A)}$ and this is absurd. We have thus proved by contradiction that {\it (i)} implies {\it (ii)}.\smallskip

It remains to show that {\it (ii)} implies {\it (i)}. If {\it (ii)} holds true, then for each $\varepsilon>0$ there is a subspace $\mathcal Y_\varepsilon$ of $\,{\mathcal R}\left({\mathbbm 1}_{(E-\varepsilon,E+\varepsilon)}\big(\widetilde A\,\big)\right)$ of dimension $r$ and we have $\mathcal Y_\varepsilon\subset \mathcal D\big(\widetilde A\,\big)\subset \mathcal D(T_E)\dot{+}\mathcal D(B)$. Using Proposition~\ref{Prop:formulaoperator} we find that for all $x\in \mathcal D(T_E)$ and $Y\in \mathcal Y_\varepsilon\,$,
\[
\big|\innerproduct{T_E x}{\pi_E (Y)}\big|=\big|\overline{q_E}\big(x,\pi_E(Y)\big)\big|=\big|\innerproduct{x}{\big(\widetilde A-E\big)Y}\big|\le \varepsilon\,\|x\|\,\| Y\|\,.
\]
Moreover for all $Y\in \mathcal Y_\varepsilon\,,$ from the first estimate of \eqref{continuous} one has
\[
\big\| \pi'_E(Y)\big\|\le C_E\,\big\| \Lambda_-\big(\widetilde A-E\big)Y\big\|\le C_E\,\varepsilon\,\| Y\|\,.
\]
So, imposing $\varepsilon\le \frac1{2\,C_E}\,$ and using the triangular inequality, we get the estimate $\| Y\|\le 2\,\| \pi_E(Y)\|\,$ for all $Y\in \mathcal Y_\varepsilon\,$.
As a consequence, the subspace $V_\varepsilon:=\pi_E (\mathcal Y_\varepsilon)\subset \mathcal D(T_E)$ is $r$-dimensional and for all $x\in \mathcal D(T_E)$ and $y\in V_\varepsilon\,$, one has
\[
\left|\innerproduct{T_E x}{y}\right|\le 2\,\varepsilon\,\|x\|\,\| y\|\,.
\]
Assume, in addition, that {\it (i)} does not hold true. This means that there exists $\delta_0>0\,$ such that $\mathrm{Rank}\big({\mathbbm 1}_{(-\delta_0,\delta_0)}(T_E)\big) \le r-1\,$. Then for each small $\varepsilon$ there is $y_\varepsilon$ in $V_\varepsilon$ such that $\| y_\varepsilon\,\|=1$ and ${\mathbbm 1}_{(-\delta_0,\delta_0)}(T_E) y_\varepsilon=0\,$. So there is $x_\varepsilon\in \mathcal D(T_E)\,$ such that $T_E x_\varepsilon = y_\varepsilon$ and
$\|x_\varepsilon \| \le \delta_0^{-1}\,$. We thus get $\innerproduct{T_E x_\varepsilon}{y_\varepsilon}=\| y_\varepsilon\,\|^2=1$ and $\|x_\varepsilon\,\|\,\| y_\varepsilon\,\|\le \delta_0^{-1}\,$. So, taking $\varepsilon$ small enough we get $\big|\innerproduct{T_E x_\varepsilon}{y_\varepsilon}\big| > 2\,\varepsilon\,\|x_\varepsilon\,\|\,\| y_\varepsilon\,\|$ and this is absurd. We have thus proved by contradiction that {\it (ii)} implies {\it (i)}, so the two properties are equivalent.

Now, given $E>\lambda_0$, $0$ is in $\sigma_{\!\rm ess}(T_E)$ if and only if {\it (i)} holds true for every $r$, and this is equivalent to saying that {\it (ii)} holds true for every $r$, which exactly means that $E\in \sigma_{\!\rm ess}(\widetilde A)$. Similarly, we can say that $0$ is in $\sigma_{\!\rm disc}(T_E)$ and has multiplicity $\mu_E$ as an eigenvalue if and only if {\it (i)} holds true for $\mu_E$ but not for $\mu_E-1$, and this is equivalent to saying that {\it (ii)} holds true for $\mu_E$ but not for $\mu_E-1$, which exactly means that $E\in \sigma_{\!\rm disc}(\widetilde A)$ with multiplicity $\mu_E$. The last statement on $\rho(\widetilde A)$ follows immediately, since for any operator $L$, $\sigma_{\!\rm ess}(L)$, $\sigma_{\!\rm disc}(L)$ and $\rho(L)$ form a partition of $\,\mathbb{C}$. This ends the proof of the lemma.
\end{proof}

\noindent{\it Proof of Proposition~\ref{Prop:thm1-c}}. Let us define  \[\underline{\lambda}:=\inf \big(\sigma_{\!\rm ess}\big(\widetilde A\big)\cap (\lambda_0,\infty)\big)\in [\lambda_0,\infty]\,.\]

By Proposition~\ref{Prop:thm1-abis}, if $E\in (\lambda_0,\lambda_\infty)$ then $0$ is either an element of $\rho(T_E)$ or an eigenvalue of $T_E$ of finite multiplicity $\mu_E$. The second case occurs when $E=\lambda_k$ for some positive integer $k$. Then $\mu_E={\rm card}\{k'\,:\, \lambda_{k'}=\lambda_{k}\}$. So, by Lemma~\ref{TE equiv A}, $(\lambda_0,\lambda_\infty)\cap\sigma_{\!\rm ess}\big(\widetilde A\big)$ is empty hence $\lambda_\infty\leq \underline{\lambda}$, and the levels $\lambda_k$ in $(\lambda_0,\lambda_\infty)$ are all the eigenvalues of $\widetilde A$ in this open interval, counted with multiplicity.

It remains to prove that $\underline{\lambda}\leq \lambda_\infty$. The nontrivial case is when the sequence $(\lambda_k)$ is bounded, so that $\lambda_\infty\in (\lambda_0,\infty)$. If the sequence is nonstationary and bounded, then $\{\lambda_k\,:\,k\geq k_0\}$ is an infinite subset of $\sigma_{\!\rm disc}(\widetilde A)$, so its limit point $\lambda_\infty$ is in $\sigma_{\!\rm ess}(\widetilde A)$. If the sequence is stationary, let $k$ be such that $\lambda_k=\lambda_\infty$. Then, by Proposition~\ref{Prop:thm1-abis}, $0\in\sigma_{\!\rm ess}(T_{\lambda_\infty})$ so, by Lemma~\ref{TE equiv A}, we find once again that $\lambda_\infty\in \sigma_{\!\rm ess}(\widetilde A) $. In conclusion, one always has $\underline{\lambda}\leq \lambda_\infty$ and this ends the proof of Proposition~\ref{Prop:thm1-c}. \finprf

\noindent{\it Proof of Theorem~\ref{Theorem:thm1}}.   Propositions~\ref{Prop:thm1-b}, \ref{Prop:lem2} and \ref{Prop:thm1-c} together imply Theorem~\ref{Theorem:thm1}. 
\finprf


\section{Applications to Dirac-Coulomb operators} \label{Section:DC}

\noindent In this section, we consider the three-dimensional {\it Dirac-Coulomb operator} $D_V=D+V$ mentioned in the Introduction. We assume that $V$ is a linear combination of Coulomb potentials $\vert x-x_j\vert^{-1}\,$ due to $J$ distinct point-like charges located at $x_1,\cdots,x_J$. If we define $D_V$ on the minimal domain $F=C^\infty_c(\R^3\setminus\{x_1,\cdots,x_J\}, \C^4)$, it is obviously symmetric in the Hilbert space $\mathcal H=L^2(\R^3,\C^4)$.
Thanks to Theorem~\ref{Theorem:thm1} we are going to construct a distinguished self-adjoint realization of $D_V$ and give a min-max principle for its eigenvalues, under some conditions on the coefficients of the linear combination. In each case, Assumption \eqref{H3} will be checked by the method of Remark \ref{check H3}.

\subsection{The attractive case}\label{subsection: attractive}
In this subsection we assume that $V(x)=-\sum_{j=1}^J\frac{\nu_j}{|x-x_j|}$ is an attractive potential generated by $J$ distinct point-like nuclei, each having ~$Z_j$ protons with $0<Z_j \leq Z_*\approx137.04$ so that $0<\nu_j=Z_j/Z_*\leq 1$ (we allow non-integer values of $Z_j$). We are going to use Talman's splitting $\Lambda_+\psi=\begin{pmatrix} \phi \\ 0\end{pmatrix}\,$, $\Lambda_-\psi=\begin{pmatrix} 0 \\ \chi\end{pmatrix}\,$ of four-spinors $\psi=\begin{pmatrix} \phi \\ \chi\end{pmatrix}$ into upper and lower two-spinors, also called large and small two-components. Then $\Lambda_+F=\mathfrak{F}\times\{0\}$ and $\Lambda_-F=\{0\}\times\mathfrak{F}$ with $\mathfrak{F}:=C^\infty_c(\R^3\setminus\{x_1,\cdots,x_J\}, \C^2)$.
With the standard notation $\sigma=(\sigma_1,\,\sigma_2,\,\sigma_3)$ for the collection of Pauli matrices, we recall (see \cite{thaller}) that
\[D_V\begin{pmatrix} \phi \\ \chi\end{pmatrix}=\begin{pmatrix} -i\sigma\cdot\nabla\chi+(1+V)\phi \\ -i\sigma\cdot\nabla\phi-(1-V)\chi\end{pmatrix}\,.\]
Assumptions~\eqref{H1}-\eqref{H2} are easily checked with $\lambda_0=-1$.
It remains to check \eqref{H3}. By Remark \ref{check H3}, it suffices to show that for some $k_0\geq 1$, $\ell_{k_0}(0)\geq 0$. Indeed, this inequality implies that $\lambda_{k_0}\geq 0>\lambda_0$. So we are led to study the quadratic form $q_0$. For $\phi\in \mathfrak{F}$ and $\psi_+=\begin{pmatrix} \phi \\ 0\end{pmatrix}$, the quantity $q_0(\psi_++L_0\psi_+)$ is a function of $\phi,\,V$ and in the rest of the subsection it is more convenient to denote it by $\mathfrak{q}^V(\phi)$. With this notation we have
\be{q0}\mathfrak{q}^V(\phi)=\int_{\R^3}\left\{\frac{\vert \sigma\cdot\nabla\phi\vert^2}{1-V}+\left(1+V\right)\vert\phi\vert^2\right\}\,,\quad \forall \phi\in\mathfrak{F}\,.
\ee

We start by the potential $V(x)=-\nu|x|^{-1}\,$ with $0<\nu\leq 1\,$, corresponding to a unique point-like nucleus. We recall the Hardy-Dirac inequality
\be{Hardy-Dirac}
\mathfrak{q}^{-|\cdot|^{-1}}(\phi)=\int_{\R^3}\left\{\frac{\vert \sigma\cdot\nabla\phi\vert^2}{1+\vert x\vert^{-1}}+\left(1-\vert x\vert^{-1}\right)\vert\phi\vert^2\right\}\,\geq\, 0
\,,\quad \forall \phi\in C^\infty_c(\R^3\setminus\{0\}, \C^2)\ee
proved in \cite{DES00a,DELV2004}. Since $\mathfrak{q}^{-\nu|\cdot|^{-1}}\geq\, \mathfrak{q}^{-|\cdot|^{-1}}$, \eqref{Hardy-Dirac} implies that $\ell_1(0)\geq 0$ and Assumption \eqref{H3} is satisfied with $k_0=1$. Then, using Theorem~\ref{Theorem:thm1} we can define a distinguished self-adjoint extension of $D_V$ for $0< \nu\le 1$ and we can also characterize all the eigenvalues of this extension in the spectral gap $(-1,1)$ by the min-max principle~\eqref{min-max}. This is not a new result: see~\cite{DES00a,EstLos-07,ELS19,SST20}, and it is known that $V$ can be replaced by more general attractive potentials that are bounded from below by $-|x|^{-1}$.\smallskip

We now assume that $J\geq 2$. In such a case, the distinguished self-adjoint extension was constructed in~\cite{Nen77,Klaus-80b} in the subcritical case $\nu_i<1\,(\forall i)$ by a method completely different from the one considered in the present work. Talman's min-max principle for the eigenvalues of the extension was studied in~\cite{ELS21}, also in the subcritical case. But that paper appealed to the abstract result of~\cite{DES06} and as mentioned in the Introduction, the arguments in~\cite{DES06} suffered from the same closability issue as~\cite{DES00a}. Theorem~\ref{Theorem:thm1} solves this issue, moreover it provides a unified treatment: construction of the extension and justification of the min-max principle even in the critical case, {\it i.e.}, when some of the coefficients $\nu_i$ are equal to $1$. But of course, in order to apply this theorem we have to check \eqref{H3} and this is more delicate than in the one-center case. Indeed, when the total number of protons $\sum_jZ_j$ is larger than $137.04\,$, if the nuclei are close to each other one expects some eigenvalues of the distinguished extension to {\it dive} into the negative continuum. If this happens, the corresponding min-max levels $\lambda_k$ should become equal to $\lambda_0$. To check Assumption~\eqref{H3} in such a situation, let us prove by contradiction that for {\it some} $k_0\geq 1$, the inequality $\ell_{k_0}(0)\geq 0$ holds true.

\noindent
Otherwise, there exists a sequence $(G_k)_{k\geq 1}$ of $k$-dimensional subspaces of $\mathfrak{F}$ such that $\mathfrak{q}^V(\phi)<0$ for all $\phi\in G_k\setminus\{0\}$. So one can construct by induction a sequence $(\phi_k)$ of wave functions such that $\phi_k\in G_k$ and $\innerproduct{\phi_k}{\phi_l}_{L^2(\R^3,\C^2)}=\delta_{kl}$. Then $\phi_k$ converges weakly to $0$ in $L^2(\R^3,\C^2)$. In order to derive a contradiction, one can try to prove that for $k$ large enough, $\mathfrak{q}^V(\phi_k)\geq 0\,$. In the subcritical case $\nu_i<1\,(\forall i)$ this has been done in~\cite[Section 6, Step 4, p. 1448-1449]{ELS21}. We give below a proof that is also valid in the critical case. In what follows, the constant $C$ changes from line to line but we keep the same notation for the sake of simplicity.

\noindent
With $\delta:=\frac{1}{2}\min_{1\leq j<j'\leq K} \vert x_j-x_{j'}\vert\,$ one takes $R>\delta +\max_{1\leq j \leq J} \vert x_j\vert$ (to be chosen later) and a partition of unity $(\theta_j)_{0\leq j\leq J+1}$
consisting of smooth functions with values in $[0,1]$ such that $\sum_{j=0}^{J+1} \theta_j^2=1$, ${\rm supp}(\theta_0)\subset B(0,2R)\setminus\bigcup_{j=1}^JB(x_j,\delta/2)$, ${\rm supp}(\theta_j)\subset B(x_j,\delta)$ for $1\leq j\leq J$ and ${\rm supp}(\theta_{J+1})\cap B(0,R)=\emptyset$.
The pointwise IMS formula \cite[Lemma 4.1]{bosi} for the Pauli operator gives
\[\vert\sigma\cdot\nabla\phi_k\vert^2=\sum_{j=0}^{J+1}\vert\sigma\cdot\nabla(\theta_j\phi_k)\vert^2-\left(\sum_{j=0}^{J+1}\vert\nabla \theta_j\vert^2\right)\vert\phi_k\vert^2\,,
\]
so, remembering that $\Vert\phi_k\Vert_{L^2(\R^3)}^2=1\,$, one gets
\begin{align*}
\mathfrak{q}^V(\phi_k)&=\sum_{j=0}^{J+1}\mathfrak{q}^V(\theta_j\phi_k)-\int_{\R^3} \left(\sum_{j=0}^{J+1}\vert\nabla \theta_j\vert^2\right)\frac{\vert\phi_k\vert^2}{1-V}\\
&=\frac{1}{2}+\sum_{j=0}^J\mathfrak{q}^V(\theta_j\phi_k)+\left(\mathfrak{q}^V(\theta_{J+1}\phi_k)-\frac{1}{2}\Vert\theta_{J+1}\phi_k\Vert_{L^2(\R^3)}^2\right)\\
&\qquad\qquad\qquad\qquad\;\;\; -\int_{\R^3} \left(\frac{1-\theta_{J+1}^2}{2}+\frac{1}{1-V}\sum_{j=0}^{J+1}\vert\nabla \theta_j\vert^2\right)\vert\phi_k\vert^2\\
&\geq \frac{1}{2}+\sum_{j=0}^{J}\mathfrak{q}^V(\theta_j\phi_k)+\left(\mathfrak{q}^V(\theta_{J+1}\phi_k)-\frac{1}{2}\Vert\theta_{J+1}\phi_k\Vert_{L^2(\R^3)}^2\right) -C\int_{B(0,2R)}\vert\phi_k\vert^2\,.
\end{align*}
From now on, we fix $R$ such that $-V\leq 1/4$ on $\R^3\setminus B(0,R)$. Then one has
\[\mathfrak{q}^V(\theta_{J+1}\phi_k)-\frac{1}{2}\Vert\theta_{J+1}\phi_k\Vert_{L^2(\R^3)}^2\geq \frac{1}{4}\Vert \theta_{J+1}\phi_k\Vert_{H^1(\R^3)}^2\,.
\]
Let
\[M:=1+\max\left\{\sup_{x\in {\rm supp}(\theta_0)}\kern-13pt-V(x)\;\;;\;\sup_{x\in {\rm supp}(\theta_1)}\kern-13pt(-V(x)-\vert x-x_1\vert^{-1})\;\;;\cdots;\;\sup_{x\in {\rm supp}(\theta_J)}\kern-13pt(-V(x)-\vert x-x_J\vert^{-1})\right\}\,.\]
Then
\[\mathfrak{q}^V(\theta_0\phi_k)\geq \frac{1}{M}\Vert \theta_{0}\phi_k\Vert_{H^1(\R^3)}^2-C\int_{\R^3}\vert\theta_{0}\phi_k\vert^2\]
and, introducing the rescaled functions   $\hat{\phi}_{j,k}(y):=(\theta_j\phi_k)(x_j+M^{-1}y)$ for $1\leq j\leq J$, one finds
\[\mathfrak{q}^V(\theta_j\phi_k)\geq \frac{1}{M^2}\mathfrak{q}^{-\vert \cdot\vert^{-1}}(\hat{\phi}_{j,k})-C\int_{\R^3}\vert\theta_{j}\phi_k\vert^2\,.\]
Gathering these estimates, one gets the lower bound
\be{lower}\mathfrak{q}^V(\phi_k)\geq \frac{1}{2}+\frac{1}{M^2}\sum_{j=1}^J \mathfrak{q}^{-\vert \cdot\vert^{-1}}(\hat{\phi}_{j,k}) + \frac{1}{M}\Vert \theta_{0}\phi_k\Vert^2_{H^1(\R^3)}+\frac{1}{4}\Vert \theta_{J+1}\phi_k\Vert^2_{H^1(\R^3)}-C\int_{B(0,2R)}\vert\phi_k\vert^2\,.\ee
From the Hardy-Dirac inequality \eqref{Hardy-Dirac}, each of the terms $\mathfrak{q}^{-\vert \cdot\vert^{-1}}(\hat{\phi}_{j,k})$ is nonnegative, so the assumptions that $\Vert\phi_k\Vert_{L^2(\R^3)}=1$ and
$\mathfrak{q}^V(\phi_k)<0$ imply that the quantities $\,\mathfrak{q}^{-\vert \cdot\vert^{-1}}(\hat{\phi}_{j,k})\,$, $\Vert\theta_{0}\phi_k\Vert_{H^1}$ and $\Vert\theta_{J+1}\phi_k\Vert_{H^1}$ are uniformly bounded. But from \cite[Theorem 1.9]{ELS19}, for $0\leq s<1/2$ there is a positive constant $\kappa_s$ such that
\[\mathfrak{q}^{-\vert \cdot\vert^{-1}}(\phi)+\Vert\phi\Vert^2_{L^2(\R^3)}\geq \kappa_s\Vert\phi\Vert^2_{H^s(R^3)}\,,\quad\forall \phi\in\mathfrak{F}\,.
\]
Applying this inequality to the functions $\hat{\phi}_{j,k}$ ($1\leq j\leq J$), one easily finds that the sequence $(\phi_k)_{k\geq 1}$ is bounded in $H^s(\R^3)$, hence precompact in $L^2_{\rm loc}(\R^3)$. Since this sequence converges weakly to zero in $L^2(\R^3)$, one concludes that
\[\lim_{k\to\infty}\, \int_{B(0,2R)}\vert\phi_k\vert^2\,=\,0\,.\]
Combining this information with \eqref{lower} one finds that for $k$ large enough, $\mathfrak{q}^V(\phi_k)\geq 0$ and this is a contradiction.

\noindent
In conclusion, the assumptions of Theorem~\ref{Theorem:thm1} are satisfied in our multi-center example, with $k_0$ possibly larger than $1$.

\subsection{The sign-changing case}\label{subsection: sign-changing}

We now consider a potential of the form
\[
V(x)=-\frac{\nu_1}{|x|}+\frac{\nu_2}{|x-x_0|}\quad\mbox{with}\quad x_0\neq 0\,,\quad0<\nu_1\le 1\quad\mbox{and}\quad 0<\nu_2\le \frac2{\frac\pi2+\frac2\pi}\;.
\]
The corresponding Dirac-Coulomb operator $D_V$ is obviously symmetric if we define it on the ``minimal" domain $C^\infty_c(\R^3\setminus\{0,x_0\}, \C^4)$.
But Talman's decomposition in upper and lower spinors cannot be used: due the unbounded repulsive term $\frac{\nu_2}{\vert x-x_0\vert}\,$, \eqref{H2} would not be satisfied. Instead, for the splitting we choose the free-energy projectors
\[
\Lambda_\pm=\mathbbm 1_{\,\R_{\pm}}(D)\,.
\]
We recall (see \cite{thaller}) that
\[D\Lambda_\pm=\Lambda_\pm D=\pm\sqrt{1-\Delta}\,\Lambda_\pm=\pm\Lambda_\pm\sqrt{1-\Delta}\;.\]
In momentum space ({\it i.e.} after Fourier transform), $\Lambda_\pm$ becomes the multiplication operator by the matrix \[M_\pm(p)=\frac{1}{2}\left(I_4\pm \frac{\alpha\cdot p+\beta}{\sqrt{\vert p\vert^2+1}}\right)\,.\]
This matrix depends smoothly on $p$ and is bounded on $\R^3$ as well as its derivatives. As a consequence, the multiplication by $M_\pm$ preserves the Schwartz class $\mathcal S(\R^3,\mathbb{C}^4)$. So the same is true for $\Lambda_\pm$ in position space. But this nonlocal operator does not preserve the compact support property, so \eqref{H1} does not hold for the domain $C^\infty_c(\R^3\setminus\{0,x_0\}, \C^4)$. Since $\Lambda_+C^\infty_c(\R^3\setminus\{0,x_0\}, \C^4)\subset \mathcal S(\R^3,\mathbb{C}^4)\subset\mathcal D(\overline{D_V})$, one can either replace the minimal domain by $F=\Lambda_+C^\infty_c(\R^3\setminus\{0,x_0\}, \C^4)\oplus \Lambda_-C^\infty_c(\R^3\setminus\{0,x_0\}, \C^4)$ as mentioned in the first comment after Theorem~\ref{Theorem:thm1}, or by $F=\mathcal S(\R^3,\mathbb{C}^4)$. In what follows, $A$ is the restriction of $\overline{D_V}$ to one of these two domains. We do not need to specify which one: the arguments proving \eqref{H2}-\eqref{H3} are the same in both cases.

By the upper bound on $\nu_2$, it follows from an inequality of Tix~\cite{MR1608118} that the Brown-Ravenhall operator $-\Lambda_- \big(A+1-\nu_2\big)\upharpoonright_{_{F_-}}=\Lambda_-(\sqrt{1-\Delta}-V-1+\nu_2)\upharpoonright_{_{F_-}}\,$ is non-negative, so~\eqref{H2} holds true with $\lambda_0\leq -1+\nu_2$. In order to bound $\lambda_1$ from below, we can use~\cite[inequality~(38)]{DES00a}. This inequality involves a parameter $\nu\in (0,1)$ and is stated for all functions $\psi_+\in F_+$. One easily passes to the limit $\nu\to 1$ with $\psi_+$ fixed and this gives us the inequality
\be{eq:Hardy-type-free}\innerproduct{\psi_+}{\sqrt{1-\Delta}\,\psi_+}_{L^2(\R^3)}-\,\int_{\R^3}\frac{|\psi_+|^2}{|x|}
\,+\,\innerproduct{\Lambda_-\frac{1}{|x|}\psi_+}{ \left(B_{-|x|^{-1}}\right)^{-1}\Lambda_-\frac{1}{|x|}\psi_+}_{L^2(\R^3)}\,\geq\,0
\ee
for all $\psi_+\in F_+$. Here, we denote by $B_{\mathcal V}$ the Friedrichs extension of the Brown-Ravenhall operator $\Lambda_-(\sqrt{1-\Delta}-\mathcal V)\upharpoonright_{_{F_-}}$, for any electric potential $\mathcal V$ such that this operator is boun\-ded from below.
Inequality \eqref{eq:Hardy-type-free} exactly says that if one chooses $(\nu_1,\nu_2)=(1,0)$ then there holds $q_0(\psi_++L_0\psi_+)\geq 0$ for all $\psi_+\in F_+$, so $\ell_1(0)\geq 0\,$, hence $\lambda_1\geq 0>\lambda_0\,$. This remains true for $0<\nu_1\leq 1$ and $0 < \nu_2\leq \frac{2}{\pi/2+2/\pi}$, since the min-max level $\lambda_1$ is a non-decreasing function of $V$.
Thus, Theorem~\ref{Theorem:thm1} can be applied with $k_0=1$ in order to find a distinguished self-adjoint extension of $D_V$ and to characterize its eigenvalues by a min-max principle.

Note that by~\cite[Corollary~3]{tix1997self}, the operator $-\Lambda_- A\upharpoonright_{_{F_-}}$ is not essentially self-adjoint for $\nu_2 >3/4$. So the abstract result~\cite[Theorem 1.1]{SST20} cannot be applied in this case.

\bigskip
\noindent{\bf Acknowledgment:} The authors wish to thank the referees for their very careful reading of the manuscript, and for many comments and suggestions that greatly improved the quality of this paper.\smallskip

\noindent J.D.~has been partially supported by the Project EFI (ANR-17-CE40-0030) and M.J.E. and E.S. by the project molQED (ANR-17-CE29-0004) of the French National Research Agency (ANR). \\\noindent{\scriptsize\copyright\,\the\year~by the authors. This paper may be reproduced, in its entirety, for non-commercial purposes.}



\bigskip\begin{center}\rule{2cm}{0.5pt}\end{center}\bigskip
\end{document}